\numberwithin{equation}{section}
\definecolor{myred}{rgb}{0.75,0,0}
\definecolor{mygreen}{rgb}{0,0.5,0}
\definecolor{myblue}{rgb}{0,0,0.65}
\def\chaptermark#1{}
\def\chapter{%
  \if@openright\cleardoublepage\else\clearpage\fi
  \thispagestyle{plain}\global\@topnum\z@
  \@afterindenttrue \secdef\@chapter\@schapter}
\def\@chapter[#1]#2{\refstepcounter{chapter}%
  \ifnum\c@secnumdepth<\z@ \let\@secnumber\@empty
  \else \let\@secnumber\thechapter \fi
  \typeout{\chaptername\space\@secnumber}%
  \def\@toclevel{0}%
  \ifx\chaptername\appendixname \@tocwriteb\tocappendix{chapter}{#2}%
  \else \@tocwriteb\tocchapter{chapter}{#2}\fi
  \chaptermark{#1}%
  \addtocontents{lof}{\protect\addvspace{10\p@}}%
  \addtocontents{lot}{\protect\addvspace{10\p@}}%
  \@makechapterhead{#2}\@afterheading}
\def\@schapter#1{\typeout{#1}%
  \let\@secnumber\@empty
  \def\@toclevel{0}%
  \ifx\chaptername\appendixname \@tocwriteb\tocappendix{chapter}{#1}%
  \else \@tocwriteb\tocchapter{chapter}{#1}\fi
  \chaptermark{#1}%
  \addtocontents{lof}{\protect\addvspace{10\p@}}%
  \addtocontents{lot}{\protect\addvspace{10\p@}}%
  \@makeschapterhead{#1}\@afterheading}
\newcommand\chaptername{Chapter}
\def\@makechapterhead#1{\global\topskip 7.5pc\relax
  \begingroup
  \fontsize{\@xivpt}{18}\bfseries\centering
    \ifnum\c@secnumdepth>\m@ne
      \leavevmode \hskip-\leftskip
      \rlap{\vbox to\z@{\vss
          \centerline{\normalsize\mdseries \uppercase\@xp{\chaptername}\enspace\thechapter}
          \vskip 3pc}}\hskip\leftskip\fi
     #1\par \endgroup
  \skip@34\p@ \advance\skip@-\normalbaselineskip
  \vskip\skip@ }
\def\@makeschapterhead#1{\global\topskip 7.5pc\relax
  \begingroup
  \fontsize{\@xivpt}{18}\bfseries\centering
  #1\par \endgroup
  \skip@34\p@ \advance\skip@-\normalbaselineskip
  \vskip\skip@ }
\def\appendix{\par
  \c@chapter\z@ \c@section\z@
  \let\chaptername\appendixname
  \def\thechapter{\@Alph\c@chapter}}
\newcounter{chapter}
\newif\if@openright
\renewcommand*\thechapter{\Roman{chapter}}
    \def\CM{{\mathbb{C}}}
    \def\PM{{\mathbb{P}}}
    \def\RM{{\mathbb{R}}}
    \def\ZM{{\mathbb{Z}}}
\def\z{\zeta}
\newcommand{\nc}{\newcommand} \newcommand{\renc}{\renewcommand}
\newcommand{\rdots}{\mathinner{ \mkern1mu\raise1pt\hbox{.}
    \mkern2mu\raise4pt\hbox{.}
    \mkern2mu\raise7pt\vbox{\kern7pt\hbox{.}}\mkern1mu}}
\def\to{\rightarrow}
\def\longto{\longrightarrow}
\nc{\triright}{\stackrel{[1]}{\to}}
\nc{\longtriright}{\stackrel{[1]}{\longto}}
\nc{\Hb}{H^\bullet}
\nc{\Br}{\mathcal{B}}
\nc{\HotRR}{{}_R\mathcal{K}_R}
\nc{\HotR}{\mathcal{K}_R}
\nc{\excise}[1]{}
\nc{\defect}{\text{df}}
\nc{\h}[1]{\underline{H}_{#1}}
\nc{\Ga}{\mathbb{G}_a} 
\nc{\Gm}{\mathbb{G}_m} 
\nc{\Perv}{{\mathbf{P}}}
\nc{\IH}{{\mathrm{IH}}}
\nc{\ic}{\mathbf{IC}}
\nc{\gl}{{\mathfrak{gl}}}
\renc{\sl}{{\mathfrak{sl}}}
\renc{\sp}{{\mathfrak{sp}}}
\renc{\Im}{\textrm{Im}}
\nc{\HBM}{H^{BM}}
 \DeclareMathOperator{\Hom}{Hom}
\renewenvironment{proof}{{\it Proof.}}
\patchcmd{\@thm}{\thm@headfont{\scshape}}{\thm@headfont{\scshape\bfseries}}{}{}
\patchcmd{\@thm}{\thm@notefont{\fontseries\mddefault\upshape}}{}{}{}
\theoremstyle{definition}
\newtheorem{thm}{Theorem}[section]
\newtheorem{prop}[thm]{Proposition}
\newtheorem{cor}[thm]{Corollary}
\newtheorem{conj}[thm]{Conjecture}
\newtheorem{defi}[thm]{Definition}
\newtheorem{Eg}[thm]{Example}
\newtheorem{ex}[thm]{Examples}
\newtheorem{exercise}[thm]{Exercise}
\newtheorem{assumption}[thm]{Assumption}
\newtheorem{application}[thm]{Application}
\theoremstyle{remark}
\newtheorem{notation}[thm]{Notation}
\newtheorem{remark}[thm]{Remark}
\newtheorem{remarks}[thm]{Remarks}
\DeclareMathOperator{\Ext}{Ext}
\nc{\simto}{\stackrel{\sim}{\to}}
\nc{\simfrom}{\stackrel{\sim}{\leftarrow}}
\newcommand{\figref}[1]{\hyperref[#1]{Figure \ref{#1}}}
\newcommand{\lemref}[1]{\hyperref[#1]{Lemma \ref{#1}}}
\newcommand{\thmref}[1]{\hyperref[#1]{Theorem \ref{#1}}}
\newcommand{\conjref}[1]{\hyperref[#1]{Conjecture \ref{#1}}}
\newcommand{\propref}[1]{\hyperref[#1]{Proposition \ref{#1}}}
\newcommand{\corref}[1]{\hyperref[#1]{Corollary \ref{#1}}}
\newcommand{\defref}[1]{\hyperref[#1]{Definition \ref{#1}}}
\newcommand{\rmkref}[1]{\hyperref[#1]{Remark \ref{#1}}}
\newcommand{\qref}[1]{\hyperref[#1]{Question \ref{#1}}}
\newcommand{\secref}[1]{\hyperref[#1]{\S\ref{#1}}}
\newcommand{\appref}[1]{\hyperref[#1]{Appendix \ref{#1}}}
\nc{\St}{\mathrm{st}}
\nc{\df}{\mathrm{df}}
\nc{\gbmod}{\textrm{-gmod-}}
\nc{\gmod}{\textrm{-gmod}}
\author{Joshua Ciappara and Geordie Williamson}
\date{April 2020}
\title{Lectures on the Geometry and Modular Representation Theory of Algebraic Groups}
\begin{document}
\maketitle

\begin{abstract}
These notes provide a concise introduction to the representation theory of reductive algebraic groups in positive characteristic, with an emphasis on Lusztig's character formula and geometric representation theory. They are based on the first author's notes from a lecture series delivered by the second author at the Simons Centre for Geometry and Physics in August 2019. We intend them to complement more detailed treatments.

  \end{abstract}

\tableofcontents 
\section*{Introduction}
\subsection{Group actions}
In mathematics, group actions abound; their study is rewarding but challenging. To make problems more tractable, an important approach is to \textit{linearise} actions and focus on the \textit{representations} that arise. Historically, the passage from groups to representations was a non-obvious step, arising first in the works of Dedekind, Frobenius, and Schur at the turn of the last century.\footnote{For a fascinating account of this history, we recommend \cite{curtis1999}.} Nowadays it pervades modern mathematics (e.g. the Langlands program) and theoretical physics (e.g. quantum mechanics and the standard model).

In the universe of all possible representations of a group, the ones we encounter by linearising are typically well behaved in context-dependent ways; we say these representations ``occur in nature'':
\begin{enumerate}
\item Any representation of a finite group occurs inside a representation obtained by linearising an action on a finite set; thus all representations of finite groups ``occur in nature''. Over the complex numbers Maschke's theorem, Schur's lemma, and character theory provide powerful tools for understanding the entire category of representations.
\item Lie group actions on smooth manifolds $M$ induce representations on $L^2(M,\mathbb{C})$ and, more generally, on the sections and cohomology spaces of equivariant vector bundles on $M$. Here it is the \textit{unitary representations} which are most prominent. The study of continuous representations of Lie groups is the natural setting for the powerful Plancherel theorems and abstract harmonic analysis.
\item The natural permutation of polynomial roots by a Galois group $\Gamma$ produces interesting representations after linearising (so-called \textit{Artin representations}). More generally, Galois group actions on \'etale and other arithmetic cohomology theories produce continuous representations (so-called \textit{Galois representations}) which are fundamental to modern number theory.
\end{enumerate}
These notes concern algebraic representations of algebraic groups. In algebraic geometry, the actions that occur in nature are the \textit{algebraic actions}; linearising leads to \textit{algebraic representations}.\footnote{One often finds the term \textit{rational representations} in the literature. We try to avoid this terminology here, as we find it often leads to confusion.} For example, an algebraic group $G$ acting on a variety then acts algebraically on its regular functions. More generally, $G$ acts algebraically on the sections and cohomology groups of equivariant vector bundles.

Among all algebraic groups, our main focus will be on the representation theory of \textit{reductive algebraic groups}. These are the analogues in algebraic geometry of compact Lie groups. Indeed, over an algebraically closed field of characteristic zero, the representation theory of reductive algebraic groups closely parallels the theory of continuous finite-dimensional representations of compact Lie groups: the categories involved are semi-simple, simple modules are classified by highest weight, and characters are given by Weyl's famous formula.

Over fields of characteristic $p$ the classification of simple modules is still by highest weight, but a deeper study of the categories of representations yields several surprises. First among these is the Frobenius endomorphism, which is a totally new phenomenon in characteristic $p$, and implies immediately that the categories of representations must behave differently to their characteristic zero cousins.

\subsection{Simple characters}
A basic question underlying these notes is the determination of the characters of simple modules. Understanding their characters is a powerful first step towards understanding their structure. Equally or perhaps more importantly, the pursuit of character formulas has motivated and been parallel to rich veins of mathematical development.

A beautiful instance of this was in the conjecture and proof of a character formula for simple highest weight modules over a complex semi-simple Lie algebra $\mathfrak{g}$. The resolution of the Kazhdan--Lusztig conjecture by Brylinski--Kashiwara \cite{BK} and Beilinson--Bernstein \cite{bb} in 1981 hinged on a deep statement relating $D$-modules on the flag variety of $G$ to representations of $\mathfrak{g}$. The geometric methods introduced in \cite{bb} were one of the starting points of what is now known as \textit{geometric representation theory}, and the localisation theorem remains a tool of fundamental importance and utility in this area.


    The analogous question over algebraically closed fields of positive characteristic has resisted solution for a longer period and demanded the adoption of totally different approaches. From the time it was posited \cite{lus} until very recently, the state of the art has been Lusztig's conjectural character formula for simple $G$-modules. Our main goal in these lectures will be to state and then examine this conjecture, particularly in terms of its connections to perverse sheaves and geometry. We conclude with a brief discussion of how the conjecture was found to be correct for large $p$, but also how the expected bounds were too optimistic. Moving along a fast route towards fundamental open questions in modular representation theory, we will encounter many of the objects, results, and ideas which underpin this discipline.
    
\subsection{Outline of contents} 
\begin{description}
\item[Lecture I] We introduce algebraic groups and their representations, as well as the Frobenius morphisms which give the characteristic $p$ story its flavour. 

\item[Lecture II] We narrow the lens to reductive groups $G$ and their root data, before making connections between the representation theory of $G$ and the geometry of the flag variety $G/B$ (for $B$ a Borel subgroup). 

\item[Lecture III] We explore two analogous character formula conjectures: one for semi-simple Lie algebras in characteristic 0, due to Kazhdan--Lusztig, and one for reductive groups in characteristic $p$, due to Lusztig. 

\item[Lecture IV] We state Lusztig's conjecture more explicitly, before explaining its relation to perverse sheaves on the affine Grassmannian via the Finkelberg--Mirkovi\'c  conjecture.

\item[Lecture V] We discuss the phenomenon of torsion explosion and its bearing on estimates for the characteristics $p$ for which Lusztig's conjecture is valid. To finish, we give an illustrative example in an easy case, as well as indications of how the theory of intersection forms can be applied to torsion computations in general. 
\end{description}

\subsection{Notation}
Throughout these notes, we fix an algebraically closed field $k$ of characteristic $p \ge 0$; our typical focus will be $p > 0$. Unadorned tensor products are taken over $k$. Unless otherwise noted, modules are \textit{left} modules.

\subsection{Acknowledgements}
We would like to thank all who took part in the summer school for interesting discussions and an inspiring week. We are grateful to the Simons Centre for hosting these lectures and for their hospitality.

\makeatletter
\let\savedchap\@makeschapterhead
\def\@makeschapterhead{\vspace*{-1cm}\savedchap}
\chapter*{Lecture I}
\let\@makeschapterhead\savedchap

\section{Algebraic groups}

We start by introducing algebraic groups and their duality with commutative Hopf algebras, using the functor of points formalism. We follow Jantzen \cite{jan}. Readers desiring to pursue this material is greater depth will certainly require further details on both the algebraic and geometric sides; for this we recommend \cite{har} and \cite{wat} in addition to \cite{jan}.

\subsection{Schemes as functors}
\begin{defi}
A \textit{$k$-functor} $\mathcal{X}$ is any (covariant) functor from the category of commutative, unital $k$-algebras to the category of sets:
$$\mathcal{X}: \text{$k$-Alg} \to \text{Set}.$$
Such $k$-functors form a category $k$-Fun with natural transformations as morphisms.
\end{defi}

When first learning algebraic geometry, we think of a $k$-variety $X$ as the subset of affine space $k^n$ defined by the vanishing of an ideal $I \subseteq k[x_1, \dots, x_n]$. Grothendieck taught us to widen this conception of a variety by considering the vanishing of $I$ over any base $k$-algebra $A$:
$$X(A) = \{ \text{$a \in A^n: f(a) = 0$ for all $f \in I$} \}.$$
In other words, $X(A)$ is the solutions in $A$ of the equations defining $X$. The association $A \mapsto X(A)$ extends to a $k$-functor $\mathcal{X}: \text{$k$-Alg} \to \text{Set}$ as follows: if $\varphi: A \to B$ is a $k$-algebra homomorphism, then the identity $$f(\varphi(a)) = \varphi(f(a)), \quad f \in I,$$ shows there is an induced mapping $X(A) \to X(B)$. In this way, $k$-varieties provide the most important examples of $k$-functors.

The bijection $X(A) \cong \text{Hom}_{\text{$k$-Alg}}(k[X],A)$ gives a coordinate-free (though perhaps less intuitive) construction of $\mathcal{X}$ from $X$, and it underlies the next definition.

\begin{defi}
Let $R$ be a $k$-algebra.
\begin{enumerate}
\item The \textit{spectrum} $\text{Spec}_k(R)$ is the representable $k$-functor $\text{Hom}(R, -)$. 
\item The category of \textit{affine $k$-schemes} is the full subcategory of the category of $k$-functors given by spectra of $k$-algebras $R$.
\end{enumerate}
If $k$ is understood, we can suppress it from notation and write simply $\text{Spec}(R)$.
\end{defi}
In this way, we obtain a contravariant functor $\text{Spec}_k: \text{$k$-Alg} \to \text{$k$-Fun}$, since an algebra homomorphism $\varphi: A \to B$ induces a natural transformation
$$\text{Spec}_k(B) \to \text{Spec}_k(A)$$
via pre-composition with $\varphi$. The anti-equivalence of \text{$k$-Alg} with the category of affine $k$-varieties now shows we have embedded the latter category inside of \text{$k$-Fun}. We henceforth drop the distinction in notation between $X$ and $\mathcal{X}$. 

\begin{defi}
Let $\mathbb{A}_k^n = \text{Spec}_k(k[x_1,\dots,x_n])$ be affine $n$-space over $k$. 
\begin{enumerate} 
\item If $X$ is a $k$-functor, then define
$$k[X] = \text{Hom}_{\text{$k$-Fun}}(X,\mathbb{A}_k^1),$$
the \textit{regular functions} on $X$. It is a $k$-algebra under pointwise addition and multiplication.
\item Say the affine $k$-scheme $X$ is \textit{algebraic} if $k[X]$ is of finite type over $k$ (i.e. finitely generated as a $k$-algebra), and \textit{reduced} if it contains no non-zero nilpotents.
\end{enumerate}
\end{defi}
This definition generalises the algebra of global functions on a $k$-variety. We should now define \textit{$k$-schemes} to be $k$-functors which are locally affine $k$-schemes in an appropriate sense. Since we will work directly with relatively few non-affine schemes, we omit the precise technical developments here and refer the reader to \cite[\textsection I.1]{jan}. 

\begin{exercise}
Consider the $k$-functor $F$ defined by the rule
    $$F(A) = \{ a \in A^{\mathbb{N}}: \text{$a_i = 0$ for all but finitely many $i \in \mathbb{N}$} \}.$$
Show that $F$ is not an affine $k$-scheme.
\end{exercise}

\subsection{Group schemes}
\begin{defi}
A \textit{$k$-group functor} is a functor $\text{$k$-Alg} \to \text{Grp}$. A \textit{$k$-group scheme} (resp. \textit{algebraic $k$-group}) is a $k$-group functor whose composite with the forgetful functor $\text{Grp} \to \text{Set}$ is an affine $k$-scheme (resp. algebraic affine $k$-scheme).
\end{defi}

Equivalently, $k$-group schemes are group objects in the category of affine $k$-schemes. From this viewpoint, it is straightforward to see that they correspond to commutative Hopf algebras in $k$-Alg under the aforementioned anti-equivalence:
\begin{equation} \label{anti}
\{ \text{$k$-group schemes} \} \cong \{ \text{commutative Hopf algebras} \}^\text{op}, \quad G \mapsto k[G].
\end{equation}
In some situations it is more convenient to specify an algebraic group by its Hopf algebra. For more discussion of this, see \cite[\textsection I.2.3--2.4]{jan}.

The following is an important source of $k$-group functors.

\begin{defi} \label{grpassoc}
Let $V$ be a $k$-vector space. The \textit{$k$-group functor $V_a$ associated to} $V$ is given by $V_a(A) = (V \otimes A,+)$.
\end{defi}

\begin{ex} \leavevmode
\begin{enumerate}
\item The \textit{additive group} $\mathbb{G}_a$ is defined on $k$-algebras by
    $$\mathbb{G}_a(A) = (A,+).$$
    In other words, $\mathbb{G}_a = k_a$ is the notation of Definition \ref{grpassoc}. We have $k[\mathbb{G}_a] = k[z]$, a polynomial ring in one variable, with $$\Delta(z) = 1 \otimes z + z \otimes 1, \quad \varepsilon(z) = 0, \quad S(z) = -z$$
    as comultiplication, counit, and antipode.
    
    \item The \textit{multiplicative group} $\mathbb{G}_m$ is defined by
    $$\mathbb{G}_m(A) = A^\times.$$
    Here $k[\mathbb{G}_m] = k[z,z^{-1}]$ and 
    $$\Delta(z) = z \otimes z, \quad \varepsilon(z) = 1, \quad S(z) = z^{-1}.$$
    A \textit{torus} over $k$ is any $k$-group isomorphic to an $n$-fold product $\mathbb{G}_m^n$.
    
    \item The \textit{$m$-th roots of unity} $\mu_m$ are a $k$-subgroup scheme of $\mathbb{G}_m$ defined by 
    $$\mu_m(A) = \{a \in A^\times: a^m = 1 \}.$$
    We have $k[\mu_m] = k[z]/(z^m - 1).$
    
    \item Let $M$ be a $k$-vector space and define $\text{GL}_M$ by
    $$\text{GL}_M(A) = \text{End}_A(M \otimes A)^\times.$$
    This is an affine $k$-scheme if and only if $M \cong k^n$ is finite dimensional, in which case it is an algebraic group with
    $$k[GL_M] = k[GL_n] = k[z_{ij}]_{1 \le i, j \le n}[(\text{det}(z_{ij})^{-1}],$$
    where we write $\text{GL}_n$ for $GL_{k^n}$. (Indeed, if $\{ m_i \}_{i \in I}$ is a basis of $M$, then there are regular coordinate functions $X_{ij} \in k[GL_n]$ for $i, j \in I$ whose non-vanishing sets would give an open cover of $GL_M(k)$ if it were the spectrum of some ring; by quasi-compactness, this forces $I$ to be finite.) Notice that $\text{GL}_1 = \mathbb{G}_m$.
    
    \item The upper triangular matrices with diagonal entries $1$ form a $k$-subgroup scheme $U_n \subseteq \text{GL}_n$.
\end{enumerate}
\end{ex}

\subsection{Base change}
Before moving on, let us note that we could have developed the above theory over an arbitrary commutative ring $R$, rather than the field $k$; this yields notions of $R$-schemes and $R$-group functors, etc. Then, given a ring map $f: R \to S$ and an $R$-scheme $X$, we can define its \textit{base change} $X_S$ from $R$ to $S$ by the formula $X_S(A) = X(A_R)$ for any $S$-algebra $A$, where $A_R$ means $A$ viewed as an $R$-algebra (with structure map $R \to S \to A$). We find that $X_S$ is an $S$-scheme and it fits into the following pullback square in the category of $R$-schemes:
\[
\begin{tikzcd}
X_S \arrow{d}{} \arrow{r}{} & X \arrow{d}{} \\
\text{Spec}(S) \arrow{r}{\text{Spec}(f)} & \text{Spec}(R)
\end{tikzcd}
\]
If $X$ is $R$-affine, then $X_S$ is $S$-affine with regular functions $S[X_S] = S \otimes_R R[X]$.

\begin{defi}
Let $Y$ be an $S$-scheme and $R$ a subring of $S$. We say $Y$ is \textit{defined over $R$} in case there is an $R$-scheme $X$ for which $X_S \cong Y$ as $S$-schemes.
\end{defi}

\section{Representations}
The main purpose of this section is to develop three equivalent viewpoints on what it means to \textit{represent} an algebraic group $G$ on $k$-vector spaces. This will parallel the classical dictionary between representations of a finite group and modules over its group ring.

\begin{defi}
A \textit{representation} of $G$ is a homomorphism of $k$-group functors $$G \to \text{GL}_V,$$
where $V$ is some $k$-vector space. 
\end{defi}

Suppose $G$ is reduced and $V \cong k^n$ is finite dimensional. A representation of $G$ on $V$ is equivalent to a group homomorphism
$$G(k) \to \text{GL}_n(k), \quad g \mapsto (z_{ij}(g)),$$
where the matrix coefficients are regular functions $z_{ij} \in k[G]$. This is an intuitive way to picture representations. 
\newpage

\begin{defi} \leavevmode
\begin{enumerate}
\item Let $G$ be an algebraic $k$-group and $V$ a $k$-vector space. A \textit{(left) $G$-module} structure on $V$ is an action of $G$ on the $k$-functor $V_a$, i.e. a natural transformation
$$G \times V_a \to V_a$$
such that the induced action of $G(A)$ on $V \otimes A$ is $A$-linear for each $A$.
\item A \textit{G-module homomorphism} from a $G$-module $V$ to a $G$-module $W$ is a $k$-linear map $f: V \to W$ such that
$$(f \otimes 1)(g \cdot (v \otimes a)) = g \cdot (f(v) \otimes a),$$
for all $g \in G$, $v \in V$, and $a \in A$; here dots denote the action maps of $G$ on $V_a$ and $W_a$.
\end{enumerate}
\end{defi}

In view of the anti-equivalence \eqref{anti}, modules for $G$ correspond to a certain type of ``dual'' representation object for the Hopf algebra $k[G]$. 

\begin{defi}
Let $H$ be a Hopf algebra over $k$. A \textit{(right) comodule} over $H$ is a $k$-module $V$ equipped with a $k$-linear map $\mu: V \to V \otimes H$, such that
$$(1_V \otimes \Delta) \circ \mu = (\mu \otimes 1_H) \circ \mu \quad \text{and} \quad (1 \otimes \varepsilon) \circ \mu = 1_V,$$
where $\Delta$ and $\varepsilon$ are the comultiplication and counit of $H$, respectively, and we identify $V \otimes k \cong V$.
\end{defi}

Now we are ready to assert the existence of a dictionary between representations, modules, and comodules.

\begin{prop} \label{equiv}
There are natural equivalences of categories:
$$\{ \text{representations of $G$} \} \cong \{ \text{left $G$-modules} \} \cong \{ \text{right $k[G]$-comodules} \}.$$
\end{prop}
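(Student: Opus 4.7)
The plan is to construct the equivalences explicitly by unpacking definitions, with Yoneda's lemma providing the key translation between the geometric and algebraic pictures. The first equivalence, between representations and left $G$-modules, is essentially notational: a representation $\rho: G \to \GL_V$ assigns to each $k$-algebra $A$ a group homomorphism $G(A) \to \End_A(V \otimes A)^\times$, which composes with the tautological evaluation to produce an $A$-linear, natural-in-$A$ action $G(A) \times (V \otimes A) \to V \otimes A$. Conversely, a $G$-module structure yields, for each $g \in G(A)$, an $A$-linear endomorphism of $V \otimes A$, automatically invertible because $g^{-1} \in G(A)$ provides its inverse. Morphisms on each side correspond by identical unpacking of the relevant commutative squares.

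For the second equivalence, I would apply Yoneda's lemma to $G = \Spec(k[G])$. An $A$-linear, natural-in-$A$ action is exactly a natural transformation $G \to \underline{\End}(V_a)$ of $k$-functors, where $\underline{\End}(V_a)(A) := \End_A(V \otimes A)$. Since $G$ is representable, Yoneda identifies the collection of such natural transformations with $\End_{k[G]}(V \otimes k[G])$. A $k[G]$-linear endomorphism $\Phi$ of $V \otimes k[G]$ is in turn determined by its restriction $\mu := \Phi(-\otimes 1): V \to V \otimes k[G]$, and conversely any $k$-linear $\mu$ extends uniquely by $k[G]$-linearity. This exhibits the bijection on underlying sets of objects.

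It remains to match the axioms. Writing $\mu(v) = \sum v_{(0)} \otimes v_{(1)}$ in Sweedler notation, the recovered action of $g \in G(A) = \Hom(k[G], A)$ is $g \cdot (v \otimes a) = \sum v_{(0)} \otimes g(v_{(1)})a$. Taking $g$ to be the identity element $\varepsilon: k[G] \to k \to A$ of $G(A)$ transforms the unit axiom into $(1_V \otimes \varepsilon) \circ \mu = 1_V$; and using that the product $gh \in G(A)$ is the convolution $(gh)(x) = \sum g(x_{(1)}) h(x_{(2)})$ through the comultiplication $\Delta$, associativity of the action translates precisely to $(\mu \otimes 1_{k[G]}) \circ \mu = (1_V \otimes \Delta) \circ \mu$. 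Morphism compatibility translates by the same mechanism to the comodule morphism condition $\mu_W \circ f = (f \otimes 1_{k[G]}) \circ \mu_V$.

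The main obstacle I anticipate is the bookkeeping of the Yoneda step: one must verify that the tensor-factor orientations produce the coassociativity axiom in the stated form rather than a transposed variant, and that applying Yoneda in the category of $k$-functors (as opposed to merely at the set level) truly recovers the full $A$-linear, natural-in-$A$ datum from the single element $\Phi \in \End_{k[G]}(V \otimes k[G])$, with $A$-linearity emerging automatically from base change along $g: k[G] \to A$.
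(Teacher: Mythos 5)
The paper leaves this proposition as an exercise immediately following its statement, so there is no proof of record to compare against; your outline is a correct and natural way to do it. The first equivalence is indeed uncurrying (with invertibility supplied by $g^{-1}$, as you note), the Yoneda step for the second is the right move, and your Sweedler-notation translation of the unit and associativity axioms into the counit law $(1_V \otimes \varepsilon)\circ\mu = 1_V$ and coassociativity $(\mu\otimes 1)\circ\mu = (1\otimes\Delta)\circ\mu$ is accurate, including the tensor-factor orientation coming from $(gh)(x)=\sum g(x_{(1)})h(x_{(2)})$. One small economy you could make explicit: rather than arguing that morphism compatibility translates ``by the same mechanism,'' it is cleanest to specialise to $A = k[G]$ and take $g$ to be the tautological point $\mathrm{id}_{k[G]}\in G(k[G])$; the $G$-module morphism identity at this single $g$ becomes literally $(f\otimes 1_{k[G]})\circ\mu_V = \mu_W\circ f$, and the general statement for all $A$ and $g$ follows by base change along $g$. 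This same specialisation also discharges your flagged concern about recovering the full natural-in-$A$ datum from $\Phi$: the Yoneda correspondence sends $\Phi$ to the transformation $g\mapsto \Phi\otimes_{k[G],g}A$, so $A$-linearity and naturality are automatic features of base change.
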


\begin{remark} \label{ops}
In fact, all three categories are \textit{abelian tensor categories}, and the equivalences respect these structures. So, for instance, $G$-modules $M$ and $N$ can be used to construct new $G$-modules $M \oplus N$ and $M \otimes N$. Observe also that $$M^* = \text{Hom}_k(M,k)$$ is naturally a $G$-module, the \textit{dual} of $M$, and thus so is $M^* \otimes N = \text{Hom}_k(M,N).$
\end{remark}

\begin{exercise}
Prove Prop. \ref{equiv}, formulating the appropriate notion of a morphism in the first and third categories. Then verify the details of Remark \ref{ops}.
\end{exercise}

It is important to be fluent in moving between the different notions of representations. However, we will ordinarily think of them as $G$-modules, and hence use the notation $\text{Rep}(G)$ for the abelian category of finite-dimensional $G$-modules $V$. In practice, results on representations can sometimes be obtained most expediently via comodules; an example follows.

\begin{prop} \label{lfinite}
If $G$ is an algebraic group and $V$ is a $G$-module, then $V$ is locally finite: any finite-dimensional subspace of $V$ is contained in a finite-dimensional $G$-stable subspace of $V$. 
\end{prop}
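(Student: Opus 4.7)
The plan is to leverage the comodule viewpoint from Proposition \ref{equiv}. By that proposition, the $G$-module structure on $V$ corresponds to a $k$-linear map $\mu : V \to V \otimes k[G]$ satisfying coassociativity and counit axioms, and $G$-stable subspaces correspond to subcomodules. It will suffice to show that every $v \in V$ lies in a finite-dimensional subcomodule, since then any finite-dimensional subspace is contained in the sum of finitely many such subcomodules, itself finite-dimensional and $G$-stable.

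First I would fix a $k$-basis $\{e_\alpha\}_{\alpha \in I}$ of $k[G]$. For $v \in V$, write $\mu(v) = \sum_\alpha v_\alpha \otimes e_\alpha$, a sum with only finitely many nonzero terms since tensor products are generated by finite sums. Let $W(v) \subseteq V$ be the $k$-span of the $v_\alpha$; this is automatically finite-dimensional. The counit axiom $(1 \otimes \varepsilon) \circ \mu = 1_V$ immediately gives $v = \sum_\alpha \varepsilon(e_\alpha)\, v_\alpha \in W(v)$, so $W(v)$ contains $v$.

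The main step is to verify that $W(v)$ is a subcomodule, i.e. $\mu(W(v)) \subseteq W(v) \otimes k[G]$. For this I would expand both sides of coassociativity $(\mu \otimes 1_{k[G]}) \circ \mu = (1_V \otimes \Delta) \circ \mu$ applied to $v$. Writing $\Delta(e_\alpha) = \sum_{\beta,\gamma} c_{\beta\gamma}^{\alpha}\, e_\beta \otimes e_\gamma$ and $\mu(v_\alpha) = \sum_\beta w_{\alpha\beta} \otimes e_\beta$, comparing coefficients with respect to the basis $\{e_\beta \otimes e_\gamma\}$ of $k[G] \otimes k[G]$ forces the identity
\[
w_{\alpha\beta} = \sum_\gamma c_{\beta\alpha}^{\gamma}\, v_\gamma,
\]
which shows each $w_{\alpha\beta}$ lies in $W(v)$. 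Consequently $\mu(v_\alpha) \in W(v) \otimes k[G]$ for every $\alpha$, proving that $W(v)$ is a subcomodule.

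The argument is not really difficult once one adopts the comodule perspective: the content is essentially a direct unwinding of coassociativity using a basis of $k[G]$. The only mild subtlety, and the step to be careful with, is the bookkeeping that extracts the concrete equation for $w_{\alpha\beta}$ from the coassociativity identity; after that, finite-dimensionality of $W(v)$ and the counit check are immediate. Notice that the hypothesis that $G$ is algebraic is not actually needed for the proof — the result holds for any affine group scheme — reflecting the fact that local finiteness is a general feature of comodules over a coalgebra.
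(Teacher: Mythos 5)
Your proof is correct and takes essentially the same approach as the paper: pass to the comodule picture, expand $\mu(v)$ against a basis of $k[G]$, show $v$ lies in the span $W(v)$ of the finitely many coefficients via the counit axiom, and use coassociativity to show $W(v)$ is stable. The only cosmetic difference is that you verify $W(v)$ is a subcomodule directly (by matching coefficients of $e_\beta \otimes e_\gamma$), whereas the paper returns to the $G$-module side and writes out a formula for $g \cdot v_i$; these are equivalent via Proposition \ref{equiv}, and your version is arguably the cleaner of the two. Your closing observation that the algebraicity of $G$ is irrelevant --- local finiteness is a general feature of comodules over any coalgebra --- is also correct.
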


\begin{proof}
View $V$ as a right $k[G]$-comodule with action map
$a: V \to V \otimes k[G]$, and suppose that for some fixed $v$ we have 
\begin{equation} \label{action}
a(v) = \sum_{i = 1}^r v_i \otimes f_i
\end{equation}
with respect to a fixed choice of basis $\{ f_i \}$ of $k[G]$. Then $g \cdot v = \sum f_i(g) v_i$ for all $g \in G$, implying $v \in W = \sum_{i=1}^r kv_i$. Since $(a \otimes 1) \circ a = (1 \otimes \Delta) \circ a$, we can apply $a \otimes 1$ to the right-hand side of \eqref{action} and expand it in two different ways:
$$\sum_i a(v_i) \otimes f_i = \sum_i \left ( \sum_k v_k^i \otimes f_k \right ) \otimes f_i = \sum_i v_i \otimes \Delta(f_i).$$
If $\varepsilon$ denotes evaluation at 1 (the counit of $k[G]$) and $\rho_g: k[G] \to k[G]$ is the action of $g \in G$ in the regular representation of $G$ on $k[G]$, then we can consider $\varepsilon_g = \varepsilon \circ \rho_{g^{-1}}$, i.e. evaluation at $g$. Now apply $1 \otimes \varepsilon_g \otimes f_i^*$ to the previous equation and simplify, where $f_i^* \in k[G]^*$ is defined by $f_i^*(f_j) = 0$:
$$g \cdot v_i = \sum_j f_i^*(\eta_j) v_i,$$
for $\eta_j = ((\varepsilon_g \otimes 1) \circ \Delta)(f_j) \in k[G]$. This shows $W$ is a finite-dimensional $G$-stable subspace of $V$ containing $v$. 
\end{proof}

\begin{cor}
Simple representations of $G$ are finite dimensional.
\end{cor}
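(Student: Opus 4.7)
The plan is to deduce this immediately from the local finiteness property established in Proposition \ref{lfinite}. Concretely, let $V$ be a simple representation of $G$ and pick any nonzero vector $v \in V$. Applying Proposition \ref{lfinite} to the one-dimensional subspace $kv \subseteq V$, we obtain a finite-dimensional $G$-stable subspace $W \subseteq V$ with $v \in W$.

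Since $v \neq 0$, the subspace $W$ is a nonzero $G$-submodule of $V$. By simplicity of $V$, the only nonzero submodule is $V$ itself, so $W = V$. Hence $V$ is finite-dimensional, being equal to the finite-dimensional $W$.

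There is essentially no obstacle here: the substantive content has been absorbed into Proposition \ref{lfinite}, and what remains is a one-line application of the definition of simplicity. The only minor care needed is to confirm that the $G$-stable subspace produced by local finiteness is genuinely a subrepresentation in the categorical sense (i.e. stable under the action in a manner compatible with morphisms in $\Rep(G)$), but this is built into the formulation of local finiteness via the equivalence in Proposition \ref{equiv}.
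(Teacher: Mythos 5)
Your proof is correct and is exactly the intended argument: the paper states this as an immediate corollary of Proposition \ref{lfinite} without giving a separate proof, and your one-line application of local finiteness plus simplicity is what is meant.
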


\begin{ex} \leavevmode
\label{repex}
\begin{enumerate}
    \item For any algebraic group $G$ and any vector space $V$, we have the \textit{trivial representation} $V_\text{triv}$ on $V$, via the trivial group homomorphism $G \to GL_V$.
    
    \item The prototypical representation is the \textit{regular representation} $k[G],$ obtained by viewing $k[G]$ as a comodule over itself. The comodule action map $$a: V \to V \otimes k[G]$$ of a comodule $V$ can be interpreted as an embedding
    $$V \hookrightarrow V_{\text{triv}} \otimes k[G].$$
    This shows that any representation embeds within a direct sum of regular representations, and also that any irreducible representation is a submodule of $k[G].$
    
    \item There is a decomposition of $k[\mathbb{G}_m]$ into one-dimensional $\mathbb{G}_m$-stable subspaces, $$k[\mathbb{G}_m] = \bigoplus_m k z^m,$$ with $a \cdot z^m = a^{-m} z^m$. These components are precisely the simple $\mathbb{G}_m$-modules, and that any representation of $\mathbb{G}_m$ is semi-simple.
    
    \item In the regular representation of $G = \mathbb{G}_a$ on $k[z]$, there is an increasing filtration by indecomposable submodules 
    $$V_i = \{ f \in k[z]: \text{deg f} \le i \}.$$
    Indeed, $\lambda \cdot z = z + \lambda$, for $\lambda \in \mathbb{G}_a$. 
    
    Suppose now that $p > 0$. Then $k \oplus kz^p$ is a $G$-stable subspace we would not see in characteristic zero. To understand why it arises is one of the goals of the next section.
\end{enumerate}
\end{ex}

\begin{exercise} \label{torusact}
Let $T = \mathbb{G}_m^r$ be a torus. Show that there is a canonical equivalence of categories
    $$\text{Rep}(T) \cong \{ \text{$X(T)$-graded $k$-modules} \},$$
where $X(T) = \Hom(T,\mathbb{G}_m)$ is the \textit{character lattice} of $T$. (Hint: this becomes very transparent in the language of comodules.)
\end{exercise}

\section{Frobenius kernels} \label{sec:frob}
\subsection{Constructions and definitions} In this section we assume $p > 0$. Given a $k$-algebra $A$ and $m \in \mathbb{Z}$, we can define a new $k$-algebra structure on the ring $A$ by
$$c \cdot a = c^{p^{-m}}a,$$
for $c \in k$ and $a \in A$; denote the resulting $k$-algebra by $A^{(m)}$. The $p$-th power map $A \to A$, $x \mapsto x^p,$
which is normally only a homomorphism of $\mathbb{F}_p$-algebras, can now be viewed as a $k$-algebra homomorphism $\sigma_A: A \to A^{(-1)}$.

Let us extrapolate this construction into geometry. Given a $k$-scheme $X$, we can form the base change $X^{(1)}$ of $X$ along $\sigma_k: k \to k^{(-1)}$. Since $k^{(-1)}$ agrees with $k$ as a ring, $X^{(1)}$ is a new $k$-scheme, fitting into the following pullback diagram:
\[
\begin{tikzcd}
X^{(1)} \arrow{d}{} \arrow{r}{} & X \arrow{d}{} \\
\text{Spec}(k^{(-1)}) \arrow{r}{\text{Spec $\sigma_k$}} & \text{Spec}(k)
\end{tikzcd}
\]
We refer to $X^{(1)}$ as the \textit{Frobenius twist} of $X$. If $X$ is affine, then $$k[X^{(1)}] = k^{(-1)} \otimes_k k[X];$$ 
more generally, $X^{(1)}$ is given as a functor by 
$X^{(1)}(A) = X(A^{(-1)}).$ In particular, the maps $X(\sigma_A): X(A) \to X(A^{(-1)})$ give rise to a \textit{Frobenius morphism} 
$$\text{Fr}: X \to X^{(1)}.$$
Its composite with the universal map $X^{(1)} \to X$ is known as the \textit{absolute Frobenius morphism} $\text{Fr}_{\text{abs}}: X \to X$.
\begin{center}
\begin{tikzcd}
X
\arrow[drr, bend left, "\text{Fr}_{\text{abs}}"]
\arrow[dr, "{\text{Fr}}" description] & & \\
& X^{(1)} \arrow[r, ""] \arrow[d, ""]
& X \arrow[d] \\
& \text{Spec}(k^{(-1)}) \arrow[r, "\text{Spec $ \sigma_k$}"]
& \text{Spec} (k).
\end{tikzcd}
\end{center}

\begin{exercise} \label{froiso}
If $X$ is defined over $\mathbb{F}_p$, then $X^{(1)} \cong X$.
\end{exercise}

\begin{exercise}
Suppose $X$ is a closed subvariety of $\mathbb{A}^n$ defined by $$f_1, \dots, f_m \in k[\mathbb{A}^n].$$ Establish defining equations for $X^{(1)}$ as a subvariety of $\mathbb{A}^n$, and explicitly describe the morphisms from the previous diagram in this setting.
\end{exercise}

Iterating the construction of Fr, we get a chain of morphisms
$$X \to X^{(1)} \to X^{(2)} \to \cdots;$$
the composite $X \to X^{(n)}$ is denoted $\text{Fr}^n$. Importantly, if $G$ is a $k$-group scheme, then so are its Frobenius twists and $\text{Fr}^n$ is a homomorphism of $k$-group schemes. Pulling back along these homomorphisms yields Frobenius twist functors 
$$\text{Rep}(G^{(n)}) \to \text{Rep}(G), \quad V \mapsto V^{\text{Fr}^n}.$$

\begin{Eg}
Identifying $\mathbb{G}_a^{(1)} \cong \mathbb{G}_a$ (as in Exercise \ref{froiso}), we have $V_1^{\text{Fr}} \cong k \oplus kz^p$, in the notation of Example \ref{repex}(4).
\end{Eg} 

\begin{defi}
The \textit{$n$-th Frobenius kernel} of a $k$-group scheme $G$ is its subgroup scheme $$G_n = \text{ker $\text{Fr}^n$} \le G.$$
\end{defi}

\begin{exercise} \label{addker} \leavevmode
\begin{enumerate}
    \item Verify that $k[\mathbb{G}_{a,n}] = k[z]/(z^{p^n})$.
    \item Show that a finite-dimensional representation of $\mathbb{G}_{a,n}$ is equivalent to the data $(V, \phi_1, \dots, \phi_n)$, where $V$ is finite dimensional and the $\phi_i$ are commuting operators on $V$ with $\phi_i^p = 0$.
\end{enumerate}
\end{exercise}

\subsection{Representations of Frobenius kernels}
To conclude this lecture, we indicate the theoretical significance of Frobenius kernels. From here onward, we will need to draw on background from Lie theory; \cite{hum1} and \cite[\textsection III]{hum} are good introductory references. Let $G$ be an $k$-group scheme and let
$$\mathfrak{g} = T_1 G = \text{Der}(k[G],k)$$
denote the Lie algebra of $G$, whose underlying vector space is the tangent space of $G$ at the identity. We can identify $X \in \text{Der}(k[G],k)$ with left-invariant $k$-derivations $D$ from $k[G]$ to itself, i.e. those for which the following square commutes:

\[
\begin{tikzcd}
k[G] \arrow{d}{D} \arrow{r}{\Delta} & k[G] \otimes k[G] \arrow{d}{1 \otimes D} \\
k[G] \arrow{r}{\Delta} & k[G] \otimes k[G]
\end{tikzcd}
\]
The bracket on $\mathfrak{g}$ is then the commutator of derivations, and furthermore we can see $\mathfrak{g}$ is a $p$-Lie algebra with $X^{[p]} = X^p$. 

Regardless of the characteristic of $k$, there is a functor of ``differentiation'',
$$D: \text{Rep}(G) \to \text{Rep}(\mathfrak{g}),\footnote{By $\text{Rep}(\mathfrak{g})$ we denote the category of finite-dimensional representations of the Lie algebra $\mathfrak{g}$.}$$
obtained according to the following recipe: Given a $k[G]$-comodule $V$ with action map $a: V \to V \otimes k[G]$, we define
$$X \cdot v = (1 \otimes X)(a(v)), \quad X \in \mathfrak{g} = \text{Der}(k[G],k), \quad v \in V.$$
We then have the following useful proposition.

\begin{prop} \label{ker}
Assume $G$ is connected. 
\begin{enumerate}
    \item In characteristic 0, $D$ is fully faithful.
    \item In characteristic $p$, $D$ induces an equivalence 
    $$\text{Rep}(G_1) \cong \text{$u(\mathfrak{g})$-mod},$$
    where $u(\mathfrak{g}) = U(\mathfrak{g})/(X^p - X^{[p]}).$
\end{enumerate}
\end{prop}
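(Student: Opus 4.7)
The plan is to prove the two parts separately, treating (1) via a differentiation-vanishing argument and (2) via the duality between a finite group scheme and the (finite-dimensional) dual of its Hopf algebra.

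For part (1), I would first reduce to the more symmetric statement that for \emph{every} $G$-module $V$, $V^G = V^{\mathfrak{g}}$. This implies full faithfulness of $D$ by taking $V = \Hom_k(W_1, W_2)$, since by Remark \ref{ops} this is a $G$-module whose $G$- and $\mathfrak{g}$-fixed vectors are $\Hom_G(W_1, W_2)$ and $\Hom_\mathfrak{g}(W_1, W_2)$ respectively. The inclusion $V^G \subseteq V^\mathfrak{g}$ is immediate from the definition of $D$. For the reverse, fix $v \in V^\mathfrak{g}$; by Proposition \ref{lfinite} we may assume $\dim V < \infty$, so the action is a homomorphism $\rho : G \to \GL_V$. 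Consider the orbit map $\mu_v : G \to V$, $g \mapsto \rho(g) v - v$. Its differential at $e$ is the map $X \mapsto X \cdot v$, which vanishes by hypothesis; transporting via left translation in $G$ (which is compatible with $\rho$) shows $d\mu_v$ vanishes at every point of $G$. In characteristic zero, a morphism of smooth connected $k$-varieties with identically zero differential is constant, so $\mu_v \equiv 0$ and $v \in V^G$.

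For part (2), I would exploit the general principle that for a finite group scheme $H$ (i.e.\ $k[H]$ finite-dimensional), the category $\Rep(H)$ is equivalent to the category of left modules over the dual Hopf algebra $k[H]^*$, by sending a comodule $(V, a : V \to V \otimes k[H])$ to the module $(V, f \cdot v := (1 \otimes f)(a(v)))$. Applied to $H = G_1$, this gives $\Rep(G_1) \cong k[G_1]^*\text{-mod}$. Crucially, this equivalence is compatible with $D$: indeed, $\mathfrak{g} = \Der(k[G], k)$ consists of functionals killing $\mathfrak{m}^2 \subseteq k[G]$, and since $f^p \in \mathfrak{m}^2$ for any $f \in \mathfrak{m}$ (as $p \ge 2$), these functionals descend to $k[G_1]^*$; the formula defining $D$ on a $G_1$-comodule is precisely the restriction of the $k[G_1]^*$-action to $\mathfrak{g}$. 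So it remains to identify $k[G_1]^* \cong u(\mathfrak{g})$ as algebras.

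To build this identification, I would invoke the universal property of $U(\mathfrak{g})$ to extend the Lie-algebra inclusion $\mathfrak{g} \hookrightarrow k[G_1]^*$ to an algebra morphism $U(\mathfrak{g}) \to k[G_1]^*$, then check that the convolution $p$-th power of $X \in \mathfrak{g}$ inside $k[G_1]^*$ equals $X^{[p]}$, so that the map descends to $u(\mathfrak{g})$. Both sides have dimension $p^{\dim \mathfrak{g}}$: the left by PBW, and the right by using smoothness of $G$ at $e$ to write $k[G_1] \cong k[x_1, \ldots, x_n]/(x_1^p, \ldots, x_n^p)$. For surjectivity, choose a basis $X_1, \ldots, X_n \in \mathfrak{g}$ dual to the $x_i$ modulo $\mathfrak{m}^2$; the images of the PBW monomials $X_1^{a_1} \cdots X_n^{a_n}$ with $0 \le a_i < p$ are, up to nonzero factorial scalars $a_1! \cdots a_n!$ (invertible in $k$ since $a_i < p$), dual to the monomial basis $x^\alpha$ of $k[G_1]$, so they span.

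The principal obstacle is verifying the identity $X^p = X^{[p]}$ inside the convolution algebra $k[G_1]^*$. Both sides must be unpacked carefully: the left is an iterated convolution of functionals, while the right comes from the $p$-restricted structure on $\mathfrak{g}$ viewed as left-invariant derivations of $k[G]$. Once this calculation is in hand, the dimension count and surjectivity argument close the proof, and the equivalence $\Rep(G_1) \simeq u(\mathfrak{g})\text{-mod}$ follows together with its compatibility with $D$.
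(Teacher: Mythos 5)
Your proof of part (2) follows the same route as the paper's outline (Remark after the proposition): pass to right $k[G_1]$-comodules, dualize to left $k[G_1]^*$-modules, and identify $k[G_1]^* \cong u(\mathfrak g)$. Where the paper states $k[G_1] \cong u(\mathfrak g)^*$ as a black box, you sketch the actual construction (universal property of $U(\mathfrak g)$, dimension count $p^{\dim \mathfrak g}$ using smoothness at $e$, and a PBW/divided-power basis argument for surjectivity), while honestly flagging the one computation you don't carry out, namely that the convolution $p$-th power of $X \in \mathfrak g \subset k[G_1]^*$ agrees with $X^{[p]}$; the paper hides this inside the same black box, so you have correctly located the crux. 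Your part (1) argument (reduce full faithfulness to $V^G = V^{\mathfrak g}$ via $V = \Hom_k(W_1,W_2)$, then use local finiteness and the characteristic-zero fact that a morphism of smooth connected varieties with vanishing differential is constant) is standard and correct, and it supplies a proof for a claim the paper states without justification.
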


\begin{remark}
Let us outline the proof of Prop. \ref{ker}(2). If $A$ is a finite-dimensional Hopf algebra over $k$, then the dual $k$-vector space $A^*$ is naturally a Hopf algebra: all the structure maps are transposes of structure maps of $A$. For instance, the multiplication of $A^*$ is the image of the comultiplication of $A$ under the isomorphism
$$\text{Hom}(A, A \otimes A) \cong \text{Hom}(A^* \otimes A^*, A^*).$$
The correspondence $A \leftrightarrow A^*$ hence defines a self-duality on the category of finite-dimensional Hopf algebras over $k$, with the additional property that
\begin{equation} \label{dual1}
\{ \text{left $A$-modules} \} \cong \{ \text{right $A^*$-comodules} \}.
\end{equation}
Now, it can be shown that 
\begin{equation} \label{dual2}
    k[G_1] \cong u(\mathfrak{g})^*,
\end{equation}
while we have seen in Proposition \ref{equiv} that for every algebraic $k$-group $H$ there is an equivalence of categories
\begin{equation} \label{dual3}
\text{Rep}(H) \cong \text{$k[H]$-comod}.
\end{equation}
The equivalence $\text{Rep}(G_1) \cong \text{$u(\mathfrak{g})$-mod}$ is then a corollary of \eqref{dual1}, \eqref{dual2}, and \eqref{dual3}.  
\end{remark}

The intuition behind Proposition \ref{ker} is that the underlying field's characteristic has a strong bearing on the size of an algebraic group's subgroups, and so on how much of the group's representation theory is ``seen around the identity" by $\mathfrak{g}$. In characteristic zero, $G$ has ``no small subgroups", while in characteristic $p > 0$, $G$ has ``many small subgroups'' (particularly the Frobenius kernels). To be precise, the property of \textit{having small subgroups} means that every neighbourhood $U$ of the identity in $G$ contains a subgroup $H \le G$.
\begin{figure}[h]
\centering
\begin{tikzpicture}[scale=0.6, every node/.style={scale=0.75}]
\coordinate (O) at (0,0);
\draw (0,2.5) node {$G$};
\draw (0,0) node[above] {$1$} node {$\bullet$};
\draw[thick] (O) circle (3.25);
\draw[blue] (-3,-2) to[out=90,in=180] (0,0);
\draw[blue] (0,0) to[bend right] (3,2);
\draw[blue] (0,0) to[bend left] (-2,3);
\draw[blue] (0,0) to[bend left] (2,-3);

\draw[decoration={text along path,reverse path,text align={align=center},text={}},decorate] (0.6,0) arc (0:180:0.6);
\draw[decoration={text along path,reverse path,text align={align=center},text={}},decorate] (1.6,0) arc (0:180:1.6);
\draw[decoration={text along path,reverse path,text align={align=center},text={}},decorate] (2.6,0) arc (0:180:2.6);

\begin{scope}[xshift=8cm]
\coordinate (O) at (0,0);
\draw[thick] (O) circle (3.25);
\draw (0,2.5) node {$G$};
\draw[purple] (O) circle (1.9);
\draw[purple] (O) circle (1.2);
\draw (0,0) node[above] {$1$} node {$\bullet$};
\draw (O) circle (3.25);
\draw[blue] (-3,-2) to[out=90,in=180] (0,0);
\draw[blue] (0,0) to[bend right] (3,2);
\draw[blue] (0,0) to[bend left] (-2,3);
\draw[blue] (0,0) to[bend left] (2,-3);

\draw (0,-1) node[above] {$G_1$};
\draw (0,-1.8) node[above] {$G_2$};
\draw (0,-2.6) node[above] {$\vdots$};
\draw[decoration={text along path,reverse path,text align={align=center},text={}},decorate] (2.9,0) arc (0:180:2.9);
\end{scope}
\end{tikzpicture}
\caption{Left: the characteristic zero picture, with ``no small subgroups''. Right: the modular picture, with ``many small subgroups''. Closed subgroups, indicated with curved lines, are generally abundant in both contexts.} 
\end{figure}
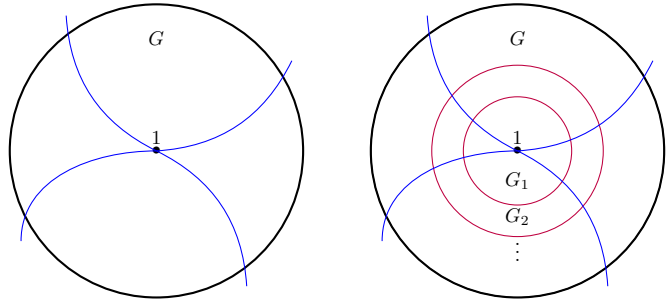

We also have a result that $\text{Rep}(G) \hookrightarrow \text{2-}\varprojlim \text{Rep}(G_m);$ here we refer to a 2-limit of categories, viewing them as objects in some appropriate 2-category. Practically speaking, this means that for any $V, V' \in \text{Rep}(G)$, there is $n \ge 1$ such that
$$\text{Hom}_G(V,V') = \text{Hom}_{G_n}(V,V').$$
In this sense, the family of Frobenius kernels of $G$ controls the representation theory of $G$.

\begin{exercise}
Show that in characteristic $p$,
$$\text{Rep}(\mathbb{G}_a) \cong \{ (V, \phi_n)_{n \ge 1}: \text{$V$ a $k$-vector space, $\phi_i \in \text{End}_k(V)$ with $\phi^p = 0$} \}.$$
This description is visibly the direct limit of the description in Exercise \ref{addker}. On the other hand, show that in characteristic zero the right-hand side should instead consist of pairs $(V,\phi)$ with $\phi: V \to V$ nilpotent. 
\end{exercise}
\newpage

\makeatletter
\let\savedchap\@makeschapterhead
\def\@makeschapterhead{\vspace*{-1cm}\savedchap}
\chapter*{Lecture II}
\let\@makeschapterhead\savedchap

\section{Reductive groups and root data}
In this lecture, we will restrict our attention to an important and well-studied class of algebraic groups. We approach this as directly as possible, recommending sources such as \cite{hum} or \cite{spring} for much more detail.

\begin{defi}
An algebraic group is \textit{unipotent} if it is isomorphic to a closed subgroup scheme of $U_n$.
\end{defi}

\begin{defi} Let $G$ be a (topologically) connected algebraic group over the algebraically closed field $k$.
\begin{enumerate}
\item $G$ is \textit{semi-simple} if the only smooth connected solvable normal subgroup of $G$ is trivial.
\item $G$ is \textit{reductive} if the only smooth connected unipotent normal subgroup of $G$ is trivial.
\end{enumerate}
\end{defi}
It can be shown that any unipotent group over $k = \overline{k}$ admits a composition series in which each quotient is isomorphic to $\mathbb{G}_a$. In particular, all unipotent groups are solvable, so all semi-simple groups are reductive. 

\begin{Eg}
The archetypal reductive group is $\text{GL}_n$. It contains many tori, which are also reductive. A maximal such torus, i.e. one contained in no other, is the subgroup of diagonal matrices $D_n \cong \mathbb{G}_m^n$.
\end{Eg}

Let $G$ be a reductive, connected algebraic group over $k$. The group's action on itself by conjugation defines a homomorphism of $k$-group functors $G \to \text{Aut}(G)$, and automorphisms of $G$ can be differentiated to elements of $\text{Aut}(\mathfrak{g})$. Thus we obtain the adjoint action of $G$ on $\mathfrak{g}.$ 

Recall from Exercise \ref{torusact} that a representation of a torus $T$ on a vector space $V$ is equivalent to a grading of $V$ by $X(T) = \Hom(T, \mathbb{G}_m)$. With respect to the adjoint action of a maximal torus $T \subseteq G$ on $V = \mathfrak{g}$, there is a decomposition
$$\mathfrak{g} = \text{Lie}(G) = \text{Lie}(T) \oplus \bigoplus_{\alpha \in R} \mathfrak{g}_\alpha.$$
Here $R \subseteq \mathfrak{X} = X(T)$ are the \textit{roots} relative to $T$, and $\mathfrak{g}_\alpha$ is the subspace upon which $T$ acts with character $\alpha$; by definition, $\mathfrak{g}_\alpha \ne 0$ for $\alpha \in R$. Pulling back through $\alpha: T \to \mathbb{G}_m$, the natural action of $\mathbb{G}_m$ on $\mathbb{G}_a$ by multiplication yields an action of $T$ on $\mathbb{G}_a$. Up to scalar, there is a unique \textit{root homomorphism} 
$x_\alpha: \mathbb{G}_a \to G$ which intertwines the actions of $T$ and induces an isomorphism
$$dx_\alpha: \text{Lie}(\mathbb{G}_a) \cong \mathfrak{g}_\alpha;$$
we denote its image subgroup by $U_\alpha$. After normalising $x_\alpha$ and $x_{-\alpha}$ suitably, we can construct $\varphi_\alpha: \text{SL}_2 \to G$ such that
$$\varphi_\alpha \begin{pmatrix}
    1 & a \\
    0 & 1
  \end{pmatrix} = x_\alpha(a) \quad \text{and} \quad \varphi_\alpha \begin{pmatrix}
    1 & 0 \\
    a & 1
  \end{pmatrix} = x_{-\alpha}(a).$$
Then, we get $\alpha^\vee \in \mathfrak{X}^\vee = Y(T) = \text{Hom}(\mathbb{G}_m,T)$ by defining 
$$\alpha^\vee(\lambda) = \varphi_\alpha \begin{pmatrix}
    \lambda & 0 \\
    0 & \lambda^{-1}
  \end{pmatrix},$$
and we write $R^\vee = \{ \alpha^\vee: \alpha \in R \} \subseteq \mathfrak{X}^\vee$. 

\begin{defi} \leavevmode
\begin{enumerate}
    \item A \textit{root datum} consists of a quadruple $(R \subseteq \mathfrak{X},R^\vee \subseteq \mathfrak{X}^\vee)$, along with a map $R \to R^\vee, \alpha \mapsto \alpha^\vee,$ satisfying the following conditions:
\begin{itemize}
    \item $\mathfrak{X}$, $\mathfrak{X}^\vee$ are free abelian groups of finite rank, equipped with a perfect pairing $\langle - , - \rangle: \mathfrak{X} \times \mathfrak{X}^\vee \to \mathbb{Z}$.
    \item $R$ and $R^\vee$ are finite and $\alpha \mapsto \alpha^\vee$ is bijective.
    \item For all $\alpha \in R$, we have $\langle \alpha, \alpha^\vee \rangle = 2$, and the function $s_\alpha: \mathfrak{X} \to \mathfrak{X}$ defined by
    $$s_\alpha(x) =  x - \langle x, \alpha^\vee \rangle \alpha$$
    permutes $R$ and induces an action on $\mathfrak{X}^\vee$ which restricts to a permutation of $R^\vee$.
\end{itemize}
Members of $R$ (resp. $R^\vee$) are called \textit{roots} (resp. \textit{coroots}). 
\item A \textit{morphism} of root data
$$(R \subseteq \mathfrak{X},R^\vee \subseteq \mathfrak{X}^\vee) \to (R_0 \subseteq \mathfrak{X}_0,R_0^\vee \subseteq \mathfrak{X}_0^\vee)$$
consists of an abelian group homomorphism $\phi: \mathfrak{X}_0 \to \mathfrak{X}$ which induces a bijection $R_0 \to R$, such that the \textit{transpose} $\phi^\text{tr}: \mathfrak{X}^\vee \to \mathfrak{X}_0^\vee$ induces a bijection $R^\vee \to R_0^\vee$.\footnote{The transpose $\phi^{\text{tr}}$ is uniquely determined by the requirement that $\langle \phi(x_0), x \rangle = \langle x_0, \phi^\text{tr}(x) \rangle_0$ for all $x \in \mathfrak{X}_0$, $x \in \mathfrak{X}$.} This gives meaning to the notion of an \textit{isomorphism} of root data: namely, $\phi$ is required to be a bijection.
\end{enumerate}
\end{defi}

We explained above how to construct a root datum from a reductive algebraic group $G$. It turns out this defines a bijection on isomorphism classes (see \cite[\textsection 1.3]{geck}).

\begin{thm}[Chevalley]
There is a one-to-one correspondence,
$$\{ \text{reductive algebraic groups over $k$} \}/\cong \quad \leftrightarrow \quad \{ \text{root data} \}/\cong.$$
\end{thm}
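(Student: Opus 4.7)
The plan is to break the statement into three parts: (i) the assignment $G \mapsto (R \subseteq \mathfrak{X}, R^\vee \subseteq \mathfrak{X}^\vee)$ is well-defined on isomorphism classes, (ii) the assignment is injective (the \emph{isomorphism theorem}), and (iii) the assignment is surjective (the \emph{existence theorem}). The first of these is mostly bookkeeping; the other two are the substance.

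For (i), one must verify that the root datum constructed from $G$ is independent of the auxiliary choice of maximal torus $T \subseteq G$. The standard fact that any two maximal tori of a reductive group are conjugate (which itself relies on the density of regular semisimple elements) produces an inner automorphism $g \mapsto hgh^{-1}$ of $G$ carrying one torus to another, and this induces the required isomorphism of root data. Functoriality with respect to isomorphisms $G \to G'$ is then immediate: transport a maximal torus across and compare.

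For (ii), the strategy is to reconstruct a reductive group from its root datum via generators and relations. Choose Borel subgroups $B_i = T_i U_i \subseteq G_i$ with $U_i$ generated by the positive root subgroups $U_\alpha$; then the open Bruhat cell $U_i^- \times T_i \times U_i$ is dense, and $G_i$ admits the Steinberg presentation with generators $x_\alpha(a)$ (for $\alpha \in R$ and $a \in k$) and $T_i$, subject to the Chevalley commutator relations among the $x_\alpha$ and the relations describing the Weyl group action. An isomorphism of root data prescribes a bijection between tori and between root subgroups, along with matched coroots; one checks that this prescription is compatible with all the defining relations, and so lifts to an isomorphism of the ambient groups. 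The verification that the structure constants in the commutator relations can be matched up to sign, and that the residual sign ambiguities can be absorbed by rescaling the $x_\alpha$, is the subtle point.

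For (iii), given an abstract root datum, one constructs a reductive $k$-group realising it. Chevalley's original approach starts from the complex semisimple Lie algebra $\mathfrak{g}_\mathbb{C}$ attached to $R$, produces an integral form $\mathfrak{g}_\mathbb{Z}$ using a Chevalley basis, and builds a group scheme over $\mathbb{Z}$ by exponentiating the nilpotent adjoint actions and adjoining a torus corresponding to the chosen character lattice $\mathfrak{X}$; base change to $k$ yields the desired group. The choice of $\mathfrak{X}$ between the root lattice and the weight lattice controls the isogeny class. Alternatively, one invokes the Demazure--Grothendieck existence theorem (SGA3), which directly produces a split reductive $\mathbb{Z}$-group scheme for any root datum. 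The main obstacle is this step (iii): there is no soft argument, and one genuinely needs either Chevalley's explicit integral construction or the full machinery of SGA3, both of which are substantially beyond the scope of these lectures and would in practice be cited rather than reproved.
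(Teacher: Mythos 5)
Your three-part decomposition into well-definedness (via conjugacy of maximal tori), injectivity (the isomorphism theorem, via the Bruhat decomposition and matching of the Steinberg presentation's structure constants), and surjectivity (the existence theorem, via Chevalley's integral construction or SGA3) is the standard strategy, and your sketch of each step is accurate at the level of detail it is pitched at. Be aware, though, that the paper does not prove this theorem: it states it and refers to \cite[\textsection 1.3]{geck}, and the remark that follows merely notes the existence of Chevalley group schemes over $\mathbb{Z}$ and the independence of the bijection from $k$ --- which corresponds to your step (iii). So there is no paper proof to compare against; your outline is consistent with the cited sources, and your candid observation that the existence theorem admits no soft argument and would in practice be cited rather than reproved is exactly how the lectures treat it, namely as a black box.
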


\begin{remark}
\leavevmode
\begin{enumerate}
    \item The bijection in the theorem is independent of $k$ (but crucially depends on the property of being algebraically closed). It turns out that for any root datum there exists a corresponding \textit{Chevalley group scheme} over $\mathbb{Z}$, whose base change to $k$ gives the corresponding reductive group over $k$.
    \item Interchanging $R \leftrightarrow R^\vee$ and $\mathfrak{X} \leftrightarrow \mathfrak{X}^\vee$ defines an obvious involution on the set of root data. On the other side of the bijection, this is a deep operation $G \leftrightarrow G^\vee$ on algebraic groups known as the \textit{Langlands dual}.
\end{enumerate}
\end{remark}
A root datum $(R \subseteq \mathfrak{X}, R^\vee \subseteq \mathfrak{X}^\vee)$ yields a finite Weyl group $$W_\text{f} = \langle s_\alpha : \alpha \in R \rangle \subseteq \text{Aut}_{\mathbb{Z}}(\mathfrak{X})$$
and also an \textit{abstract root system} within the subspace $V$ spanned by $R$ in the Euclidean space $\mathfrak{X}_{\mathbb{R}} = \mathfrak{X} \otimes_\mathbb{Z} \mathbb{R}$. In particular, there exists a choice of \textit{simple roots} $\Sigma \subseteq R$, which is a basis for $V$ such that any element of $R$ is a non-negative or non-positive integral linear combination from $\Sigma$. Then we obtain positive roots 
$$R_+ = \{ \alpha \in R: \text{$\alpha = \sum_{\sigma \in \Sigma} c_\sigma \sigma$ for $c_\sigma \in \mathbb{Z}_{\ge 0}$} \},$$
and simple reflections $S_{\text{f}} = \{ s_\alpha: \alpha \in \Sigma \}.$
Assume from now on that we are working with the root datum corresponding to a reductive group $G$, and fix choices $$\Sigma \subseteq R_+ \subseteq R$$ of positive (simple) roots. Corresponding to the choice of $R_+$ is a \textit{Borel subgroup} $T \subseteq B^+ = TU^+ \subseteq G$, where $U^+$ is the subgroup of $G$ generated by the $U_\alpha$ for $\alpha \in R_+$. For more detail on these objects, see \cite{bou} or \cite[\textsection II.1]{jan}.

\begin{ex}
\leavevmode
\begin{enumerate}
    \item Take $G = \text{GL}_n$ with maximal torus $T = D_n$. Then
    $$\mathfrak{X} = \bigoplus_i \mathbb{Z} \varepsilon_i, \quad \mathfrak{X}^\vee = \bigoplus_i \mathbb{Z} \varepsilon_i^\vee,$$
    where $\varepsilon_i(\text{diag}(\lambda_1, \dots, \lambda_n)) = \lambda_i$. The roots are
    $$R = \{ \varepsilon_i - \varepsilon_j: i \ne j \},$$
    and if we choose $R_+ = \{ \varepsilon_i - \varepsilon_j: i < j \}$ then $\Sigma = \{ \varepsilon_i - \varepsilon_{i+1} \}$ and $B^+$ is the set of upper triangular matrices.
    
    \item If $G = \text{SL}_n \le \text{GL}_n$ is the subgroup of matrices with determinant 1, then it contains a maximal torus $T \cong D_{n-1}$ consisting of the diagonal matrices with non-zero entries whose product is 1. We get
    $$\mathfrak{X} = \left ( \bigoplus_{i=1}^n \mathbb{Z} \varepsilon_i \right )/(\varepsilon_1 + \cdots + \varepsilon_n) \cong \bigoplus_{i=1}^{n-1} \mathbb{Z} (\varepsilon_i - \varepsilon_{i+1}),$$
    where by abuse of notation we conflate $\varepsilon_i - \varepsilon_{i+1}$ with its image in the indicated quotient. On the other hand, $\mathfrak{X}^\vee$ naturally identifies with the subgroup of $\oplus_i \mathbb{Z} \varepsilon_i^*$ whose coefficients in $\{ \varepsilon_i^* \}$ sum to zero. Then $$R = \{ \alpha_{ij} = \varepsilon_i - \varepsilon_j: i \ne j \}$$ and we can choose $\Sigma = \{ \varepsilon_i - \varepsilon_{i+1} \},$ for which $B^+$ is again the set of upper triangular matrices in $\text{SL}_2$. Up to scalar,
    $$x_{\alpha_{ij}}(\lambda) = I_n + \lambda e_{ij},$$
    where $e_{ij}$ denotes the matrix with 1 in position $(i,j)$ and zeroes elsewhere, so we can take $$\varphi_{\alpha_{ij}} \begin{pmatrix}
    a & b \\
    c & d
  \end{pmatrix} = a e_{ii} + b e_{ij} + c e_{ji} + de_{jj}$$
  for $i < j$. Thus we see $\alpha_{ij}^\vee = \varepsilon_i^\ast - \varepsilon_j^\ast$.
    \item Let $G = \text{PGL}_n$, the quotient of $\text{GL}_n$ by its centre. If $D_n = T \le \text{GL}_n$ is chosen as a maximal torus, then $q(T)$ is a maximal torus in $G$, where $q: \text{GL}_n \to G$ is the defining quotient map. We obtain that
    $$\mathfrak{X} = X(q(T)) = \left \{ \sum_{i=1}^n a_i \varepsilon_i: \sum a_i = 0 \right \} \subseteq X(T).$$
    The cocharacter lattice $\mathfrak{X}^\vee$ is isomorphic to $\left ( \oplus_i \mathbb{Z} \varepsilon_i^* \right )/(\varepsilon_1^* + \cdots + \varepsilon_n^*)$, where the image of $\varepsilon_i^*$ corresponds to the cocharacter $\lambda \mapsto I + (\lambda - 1)e_{ii}$. After determining roots and coroots, it becomes clear from our descriptions that $\text{PGL}_n$ is the Langlands dual of $\text{SL}_n$.
\end{enumerate}
\end{ex}

\begin{exercise}
Calculate the root data of $\text{Spec}_{2n}$, $\text{SO}_{2n}$, and $\text{SO}_{2n+1}$. Identify the Langlands dual in each case.
\end{exercise}

\section{Flag varieties}
\subsection{Geometric realisations of simple modules}
We have seen that every representation of $G$ embeds into a direct sum of copies of $k[G]$, or in other words, is ``seen by $k[G]$''. However, for reductive groups $G$, much can be gleaned by studying the flag variety $G/B^+$. In particular, we will see that simple representations of $G$ arise in spaces of global sections of sheaves on $G/B^+$.

\begin{ex}
\leavevmode
\begin{enumerate}
\item For $G = \text{SL}_2$ we have $G/B^+ = \mathbb{P}^1$. To see this, notice that we can identify $\mathbb{P}^1$ with the set $L$ of lines $0 \subseteq \ell \subseteq V = k^2$. There is an obvious transitive action of $G$ on $L$, under which the unique line $\ell \in L$ containing $e_1 = (1,0)$ has stabiliser $B^+$. Hence, the action map yields the stated isomorphism.
\item For entirely similar reasons, $G = \text{GL}_n$ is such that 
$$G/B^+ = \{ 0 \subseteq V_1 \subseteq \dots \subseteq V_n = k^n: \text{$V_i$ is an $i$-dimensional subspace} \}.$$
Indeed, let $\mathscr{F}$ denote the right-hand side, and let $F_0 \in \mathscr{F}$ be the \textit{standard flag} corresponding to an ordered basis of $k^n$ relative to which $B^+$ consists of upper triangular matrices; that is, $F_0$ is the flag with $V_i$ spanned by the standard basis vectors $e_1, \cdots, e_i$. Now the orbit map
$$G \to \mathscr{F}, \quad g \mapsto g \cdot F_0$$
induces a morphism $G/B^+ \to \mathscr{F}$ by the definition of quotient varieties; this turns out to be an isomorphism.
\end{enumerate}
\end{ex}

\begin{defi} \label{equidef}
Suppose $G$ acts on a $k$-scheme $X$ through $\sigma: G \times X \to X$. A \textit{G-equivariant sheaf} $\mathcal{F}$ on $X$ is a sheaf of $\mathcal{O}_X$-modules together with an isomorphism of $\mathcal{O}_{G \times X}$-modules 
$$\phi: \sigma^* \mathcal{F} \to p_2^* \mathcal{F}$$
which satisfies the cocycle condition
$$p_{23}^* \phi \circ (1_G \times \sigma)^* \phi = (m \times 1_X)^* \phi.$$
Here we refer to the obvious projections $p_{23}: G \times G \times X \to G \times X$, $p_2: G \times X \to X$, and multiplication $m: G \times G \to G$.
\end{defi}

\begin{remark}
On the stalk level, the first of these conditions ensures that $\mathcal{F}_{gx} \cong \mathcal{F}_x$ for all $x \in X$, and the second that the isomorphism $\mathcal{F}_{ghx} \cong \mathcal{F}_x$ coincides with $\mathcal{F}_{ghx} \cong \mathcal{F}_{hx} \cong \mathcal{F}_x$.
\end{remark}

Given a $G$-equivariant sheaf $\mathcal{F}$ on $X$ with $G$-action $\sigma$, the space of global sections $\Gamma(X, \mathcal{F})$ admits a natural $G$-module structure. While we will not need it directly, let us record the formula defining this action. If $g \in G$ and $w \in \Gamma(X,\mathcal{F})$, then
$$g \cdot w = (\phi_{G \times X} \circ \sigma_X^{\#})(w)(g^{-1}),$$
where $\phi$ is the $\mathcal{O}_{G \times X}$-module isomorphism required by Definition \ref{equidef} and $\mathcal{\sigma}_X^{\#}$ is the map on global sections $\Gamma(X, \mathcal{F}) \to \Gamma(G \times X, \sigma^* \mathcal{F})$ associated with $\sigma$.


Suppose that $V$ is a simple $G$-module. Then $G$ acts on $\mathbb{P}(V^*)$ and $\mathcal{O}(1)$ is an equivariant line bundle for the action. In particular, we recover the representation $V$ from the action on global sections,
$$\Gamma(\mathbb{P}(V^*),\mathcal{O}(1)) = V.$$
We now want to transfer this realisation to the flag variety. Either of the following facts may be adduced to prove that $B^+$ has a fixed point in its action on $\mathbb{P}(V^*)$.
\begin{thm}[Borel] 
If $H$ is a connected, solvable algebraic group acting through regular functions on a non-empty complete variety $W$ over an algebraically closed field, then there is a fixed point of $H$ on $W$.
\end{thm}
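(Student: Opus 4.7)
The plan is induction on $\dim H$, relying on two preliminary reductions: first, passing to a minimal closed $H$-stable subvariety of $W$, and second, using solvability to shrink $H$.

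For the first reduction, I would choose a non-empty closed $H$-invariant subvariety $V \subseteq W$ of minimal dimension (possible since $W$ is Noetherian). An $H$-orbit in $V$ of smallest dimension is automatically closed in $V$, hence $H$-stable and closed in $W$; by minimality $V$ is itself a single $H$-orbit $H/P$, and $V$ is complete as a closed subvariety of the complete variety $W$. It then suffices to show that such a homogeneous complete $V$ is a single point.

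For the inductive step, when $\dim H = 0$ connectedness forces $H$ trivial and any point of $W$ is fixed. When $\dim H > 0$, I would exhibit a proper, closed, connected, normal subgroup $N \subsetneq H$ as follows: if the commutator $[H,H]$ is non-trivial, take $N = [H,H]$, which is proper by solvability and connected because $H$ is; if $H$ is abelian, take instead any closed connected normal subgroup of codimension one, which exists because a connected solvable algebraic group admits a composition series whose successive quotients are isomorphic to $\mathbb{G}_a$ or $\mathbb{G}_m$. By the inductive hypothesis applied to $N$ acting on the complete variety $V$, the fixed-point set $V^N$ is non-empty; it is also closed in $V$ and $H$-stable (by normality of $N$), so minimality of $V$ forces $V^N = V$. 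Hence the $H$-action on $V$ factors through $H/N$, a group of strictly smaller dimension, and iterating reduces us to the case where $H$ is one-dimensional.

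The base case invokes the classification of one-dimensional connected algebraic groups to conclude $H \cong \mathbb{G}_a$ or $H \cong \mathbb{G}_m$. Either way, $H$ embeds as an open subvariety of $\mathbb{P}^1$ whose complement consists of $H$-fixed points, with respect to the natural extension of the translation action of $H$ on itself. For any $v \in V$, the orbit map $\phi \colon H \to V$, $h \mapsto h \cdot v$, is a morphism from an open subvariety of the smooth projective curve $\mathbb{P}^1$ into the complete variety $V$; by the valuative criterion of properness, $\phi$ extends uniquely to an $H$-equivariant morphism $\overline{\phi} \colon \mathbb{P}^1 \to V$, and the image under $\overline{\phi}$ of an $H$-fixed point of $\mathbb{P}^1$ is an $H$-fixed point of $V$. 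The main obstacle is this last step: extending a rational map from $\mathbb{P}^1$ to $V$ is the one place where completeness of $V$ (equivalently, of $W$) is genuinely used, and it is precisely this input that fails in the quasi-projective setting.
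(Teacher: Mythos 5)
The paper states Borel's fixed point theorem without proof, treating it as a known black box (it is offered as one of two facts that imply the existence of a $B^+$-fixed point on $\PM(V^*)$), so there is no in-paper proof to compare against. Evaluated on its own, your argument is essentially the standard inductive proof and is correct in substance, but two points deserve tightening.

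First, ``minimal dimension'' is not quite the right minimality. A nonempty closed $H$-stable $V$ of minimal dimension need not be a single orbit: if $V$ is reducible it may contain a proper closed $H$-stable subset of the same dimension, and the later step $V^N = V$ would be similarly endangered. The parenthetical appeal to Noetherianness suggests you really mean \emph{minimal with respect to inclusion} (the descending chain condition gives this), or else you should pick $V$ irreducible of minimal dimension, noting that a connected $H$ stabilises each irreducible component of any $H$-stable closed set. Once $V$ is minimal for inclusion (equivalently a single closed orbit), the argument that $V^N = V$ and the factorisation through $H/N$ are fine.

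Second, your inductive bookkeeping is a little tangled: the step that produces a proper nontrivial $N$ genuinely works only when $\dim H \geq 2$ (for a one-dimensional connected group, both $[H,H]$ and the codimension-one subgroup are trivial), so the true base case is $\dim H = 1$ rather than $\dim H = 0$, and the ``iterating'' phrasing conflates the inductive hypothesis with an explicit unrolling. Once this is cleaned up, your $\mathbb{P}^1$ argument in the base case is correct (the unique extension of the orbit map to $\PM^1$ is automatically $H$-equivariant by density and separatedness), and it is a pleasant variant of the textbook route. The more common argument (Humphreys, Springer) reduces instead to the abelian case via $[H,H]$ and then concludes by observing that a closed orbit $H/H_x$ with $H$ abelian is both affine (quotient of affine groups by a normal subgroup) and complete, hence a point; your version trades the ``affine $+$ complete $\Rightarrow$ point'' lemma for the classification of one-dimensional connected groups plus the valuative criterion, which is a reasonable exchange.
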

\begin{prop} \label{fixed}
Suppose $U$ is a unipotent group and $M$ is a non-zero $U$-module. Then $M^U \neq 0$. In particular, the trivial representation is a $U$-submodule of $M$.
\end{prop}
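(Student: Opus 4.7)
The plan is to reduce, by two successive simplifications, to the case $U = \mathbb{G}_a$ acting on a finite-dimensional space, which is then handled by elementary linear algebra on commuting nilpotent operators. First, by local finiteness (Proposition \ref{lfinite}), any nonzero $v \in M$ lies in a finite-dimensional $U$-stable subspace; a fixed vector in this subspace is a fortiori fixed in $M$, so I may assume $\dim M < \infty$. Second, I would invoke the structural fact recalled in the text that any unipotent group over $k = \overline{k}$ admits a composition series $U = U_0 \triangleright U_1 \triangleright \cdots \triangleright U_r = 1$ with each subquotient $U_i/U_{i+1} \cong \mathbb{G}_a$, and induct on $r$ (with $r = 0$ trivial).

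For the inductive step, the hypothesis applied to the normal subgroup $U_1$ acting on $M$ yields $N := M^{U_1} \neq 0$. Normality of $U_1$ in $U$ makes $N$ a $U$-stable subspace: for $v \in N$, $u \in U$, $u_1 \in U_1$, one computes $u_1 \cdot (u \cdot v) = u \cdot ((u^{-1} u_1 u) \cdot v) = u \cdot v$, since $u^{-1} u_1 u \in U_1$. Hence $N$ descends to a nonzero module for $U/U_1 \cong \mathbb{G}_a$, and any $\mathbb{G}_a$-fixed vector in $N$ is automatically $U$-fixed in $M$.

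The base case is then $\mathbb{G}_a$ acting on finite-dimensional nonzero $M$. Since $k[\mathbb{G}_a] = k[z]$, the matrix entries of $\rho: \mathbb{G}_a \to \text{GL}(M)$ are polynomials, so one may expand $\rho(\lambda) = \sum_{i=0}^N \lambda^i D_i$ with $D_0 = 1$ and $D_i \in \text{End}_k(M)$; the homomorphism identity $\rho(\lambda + \mu) = \rho(\lambda)\rho(\mu)$ forces the divided-power relations $D_i D_j = \binom{i+j}{i} D_{i+j}$, which in particular make all the $D_i$ commute. A vector is $\mathbb{G}_a$-fixed precisely when it lies in $\bigcap_{i \geq 1} \ker D_i$. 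Each $D_i$ with $i \geq 1$ is nilpotent: in characteristic zero because $D_i$ is a scalar multiple of $D_1^i$ and $D_1^{N+1} = 0$; in characteristic $p$ because $D_i^p$ equals a multinomial coefficient $\binom{ip}{i,\ldots,i}$ times $D_{ip}$, and this coefficient is $\equiv 0 \pmod{p}$ by a short Kummer--Lucas computation (consistent with the operator description of $\text{Rep}(\mathbb{G}_{a,n})$ in Exercise \ref{addker}). A standard induction---take $\ker D_1$, nonzero by nilpotency and stable under the other $D_j$ by commutativity, then intersect with $\ker D_2$, and so on---produces a nonzero common kernel, yielding the desired fixed vector.

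The main obstacle is the base case in positive characteristic: one must verify that the entire divided-power family $\{D_i\}_{i \geq 1}$ consists of nilpotent operators, since in characteristic $p$ these are no longer all generated by $D_1$ alone. Everything else in the argument is essentially formal once this step is in hand.
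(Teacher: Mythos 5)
The paper leaves this proposition as an exercise and supplies no proof of its own, so there is nothing to compare against; but your argument is correct and is essentially the intended solution. The three-step reduction is the natural one: Proposition~\ref{lfinite} reduces to finite dimensions, the composition series reduces to $\mathbb{G}_a$, and the $\mathbb{G}_a$ case is handled cleanly by expanding $\rho(\lambda)=\sum \lambda^i D_i$, extracting the divided-power relations $D_iD_j=\binom{i+j}{i}D_{i+j}$, and intersecting kernels of the commuting nilpotent operators. All the computations check out: $v_p\bigl(\tfrac{(ip)!}{(i!)^p}\bigr)=s_p(i)\ge 1$ by Legendre/Kummer, so $D_i^p=0$ in characteristic $p$, and in characteristic zero $D_1^{N+1}=(N+1)!\,D_{N+1}=0$ forces all $D_i$ nilpotent. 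The induction $\ker D_1\supseteq\ker D_1\cap\ker D_2\supseteq\cdots$ then produces a nonzero fixed vector, and normality gives the $U$-stability of $M^{U_1}$.

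Two small remarks, neither fatal. First, the effort you spend proving the sharp bound $D_i^p=0$ in characteristic $p$ is unnecessary for nilpotency: since $D_i^m$ is an integer multiple of $D_{im}$ and $D_{im}=0$ once $im>N$, every $D_i$ with $i\ge 1$ is nilpotent in all characteristics for free. (The sharp bound is still pleasant, as it recovers the operator description $\phi_i^p=0$ of Exercise~\ref{addker}.) Second, the composition-series input ``every unipotent group has a composition series with $\mathbb{G}_a$ quotients'' is stated in the paper, but under the paper's own definition of unipotent (closed subgroup \emph{scheme} of $U_n$) it is literally true only for smooth connected $U$; an infinitesimal group such as $\alpha_p=\mathbb{G}_{a,1}$, or a constant $p$-group, is unipotent in this sense but has no $\mathbb{G}_a$ subquotients. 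Your proof covers all the cases the paper actually uses (e.g.\ $U=U^+$). If one wanted the proposition in full scheme-theoretic generality, the composition series would also have $\alpha_p$ and $\mathbb{Z}/p\mathbb{Z}$ subquotients; the $\alpha_p$ case is handled by the same nilpotent-operator argument (a single $\phi$ with $\phi^p=0$), and the $\mathbb{Z}/p\mathbb{Z}$ case by the classical observation that a finite $p$-group acting on a nonzero $\mathbb{F}_p$-vector space has a fixed vector, so the structure of your argument survives intact.
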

\begin{exercise}
Prove Proposition \ref{fixed}.
\end{exercise}
Acting on a $B^+$-fixed point yields a morphism $f: G/B^+ \to \mathbb{P}(V^*)$, which then fits into a diagram
\[
\begin{tikzcd}
f^* \mathcal{O}(1) \arrow{d}{} \arrow{r}{} & \mathcal{O}(1) \arrow{d}{} \\
G/B^+ \arrow{r}{f} & \mathbb{P}(V^*).
\end{tikzcd}
\]
On global sections we have a non-zero map $V \to \Gamma(G/B^+,f^* \mathcal{O}(1))$. Since $V$ is simple, this map is injective. Hence we can conclude that simple representations of $G$ occur in global sections of line bundles on the flag variety.

\begin{ex}
Let $G = \text{SL}_2$. On $\mathbb{P}^1 = G/B^+$, the line bundles $\mathcal{O}(n)$ have a unique equivariant structure. Recall that we can identify
$$\nabla_n = \Gamma(\mathbb{P}^1,\mathcal{O}(n)) = k[x,y]_{\text{deg $n$}} = ky^n \oplus ky^{n-1} x \oplus \cdots \oplus kx^n$$
if $n \ge 0$, and is zero otherwise. 

If $p = 0$, then the $\nabla_n$ are exactly the simple $\text{SL}_2$-modules; this follows (for example) from Lie algebra considerations and leads, for example, to the theory of spherical harmonics.

If $p > 0$, then $\nabla_n$ is simple for $0 \le n < p$, but $\nabla_p$ is not. Indeed, there is a $G$-submodule
$$L_p = kx^p \oplus ky^p \subseteq \nabla_p,$$
which is the Frobenius twist of $\nabla_1$, the natural representation of $\text{SL}_2$ on $k^2$. This is clear from the formula for the action of an arbitrary matrix:
$$\begin{pmatrix}
a & b \\
c & d 
\end{pmatrix} \cdot x^p = a^p x^p + c^p y^p; \quad \begin{pmatrix}
a & b \\
c & d 
\end{pmatrix} \cdot y^p = c^p x^p + d^p y^p.$$
In general, $L_n = G \cdot x^n \subseteq \nabla_n$ is simple; hence the simple modules are indexed by the same set as in the characteristic zero case, although their dimensions are different in general.
\end{ex}

\subsection{Line bundles on the flag variety}
Having located simple $G$-modules within the global sections of line bundles on $G/B^+$, it remains for us to construct and study those line bundles. Let $B$ be the Borel subgroup of $G$ corresponding to $-R_+$ (the opposite Borel subgroup to $B^+$). In the sequel, it will be more convenient to work with $G/B$ (which is isomorphic to $G/B^+$, since $B$ and $B^+$ are conjugate). Importantly, there is an open embedding
\begin{equation} \label{big}
\mathbb{A}^{|R_+|} \cong U^+ \hookrightarrow G/B, \quad u \mapsto uB/B,
\end{equation}
whose image is dense and often called the (opposite) open \textit{Schubert cell}. 

The following definition is really also a proposition; see \cite{jan}, I.5.8.

\begin{defi}
Let $H$ be a flat group scheme acting freely on a $k$-scheme $X$ in such a way that $X/H$ is a scheme; let $\pi: X \to X/H$ be the canonical morphism. There is an \textit{associated sheaf} functor 
$$\text{$\mathscr{L} = \mathscr{L}_{X,H}:$  \{$H$-modules\} $\to$ \{vector bundles on $X/H$\},}$$ defined on objects as follows: if $U \subseteq X/H$ is open, then
$$\mathscr{L}(M)(U) = \{ f \in \text{Hom}_\text{Sch}(\pi^{-1}(U),M_a): f(xh) = h^{-1} f(x) \}.$$
In case that $\pi^{-1}(U)$ is affine, these sections coincide with $(M \otimes k[\pi^{-1} U])^H$.
\end{defi}

The associated sheaf functor has a number of useful properties, including exactness. Much of its theoretical importance derives from its relation to \textit{induction}: whenever $H_2$ is a subgroup scheme of $H_1$ such that $H_1/H_2$ is a scheme, there is an isomorphism \begin{equation} \label{indsh}
R^n \text{ind}_{H_1}^{H_2} M \cong H^n(H_1/H_2, \mathscr{L}(M)), \quad n \ge 0,
\end{equation}
where $R^n$ refers to the $n$-th right derived functor. In fact, many results concerning induction are most readily proved geometrically via \eqref{indsh}. The interested reader is referred to \cite[\textsection I.5]{jan}.

\begin{notation}
For $\lambda \in \mathfrak{X}$, let $k_\lambda$ be the corresponding representation of $B$, arising from pullback along
$$B \to B/[B,B] \cong T.$$
Then define the sheaf $\mathcal{O}(\lambda) = \mathscr{L}_{G,B}(k_{-\lambda})$ on $G/B$. Because any character is of rank 1, $\mathcal{O}(\lambda)$ is a locally free sheaf of rank 1. (For more detail on this point, see \cite[I.5.16(2)]{jan} and \cite[II.1.10(2)]{jan}.)
\end{notation}

\begin{exercise}
Let $G = \text{SL}_2$ and let $\varpi$ denote the fundamental weight, i.e. the weight such that $\langle \varpi, \alpha^\vee \rangle = 1$, where $\alpha$ is a positive root. Verify that $\mathcal{O}(n \varpi)$ agrees with the invertible sheaf $\mathcal{O}(n) \in \mathbb{P}_k^1$. 
\end{exercise}

Restricting along the open embedding \ref{big}, we find
$$\Gamma(G/B,\mathcal{O}(\lambda)) \hookrightarrow \Gamma(U^+,\mathcal{O}_{U^+}) \cong k[U^+];$$
here we are using that line bundles on affine $k$-space, including $\mathcal{O}(\lambda)|_{U^+}$, are trivial.

\begin{prop} \leavevmode
\begin{enumerate}
    \item There is an action of $T$ on $\Gamma(U^+,\mathcal{O}_{U^+})$ such that $1$ has weight $\lambda$.
    \item The following are equivalent: 
    \begin{enumerate}
    \item $1$ extends to a section $v_\lambda \in \Gamma(G/B,\mathcal{O}(\lambda))$.
    \item $\lambda \in \mathfrak{X}_+$ is dominant.
    \item $\Gamma(G/B,\mathcal{O}(\lambda)) \ne 0$.
    \end{enumerate}
\end{enumerate}
\end{prop}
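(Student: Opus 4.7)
For Part (1): I identify $\Gamma(U^+ B/B, \mathcal{O}(\lambda))$ with $k[U^+]$ using the open embedding \eqref{big} together with the defining property of the associated sheaf functor. Sections of $\mathcal{O}(\lambda) = \mathscr{L}_{G,B}(k_{-\lambda})$ over the cell $U^+ B/B$ (which is $T$-stable, since $T$ normalises $U^+$ and fixes $eB$) are functions $\tilde f:U^+ B \to k$ satisfying $\tilde f(ub) = \lambda(b)\tilde f(u)$ for $u \in U^+$, $b \in B$, and restriction to $U^+$ gives the identification with $k[U^+]$. The $T$-action is inherited from left translation on $G/B$; using the factorisation $t^{-1}\cdot ub = (t^{-1}ut)(t^{-1}b)$ and computing on the constant section $\tilde 1(ub) = \lambda(b)$, the weight of $1 \in k[U^+]$ emerges as $\lambda$.

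For Part (2), the implication (a)$\Rightarrow$(c) is immediate. For (c)$\Rightarrow$(b), note that $V := \Gamma(G/B, \mathcal{O}(\lambda))$ is a finite-dimensional $G$-module since $G/B$ is projective; by \propref{fixed} applied to $U^+$, there is a non-zero $U^+$-invariant $s \in V$, and decomposing into $T$-weight spaces we may take $s$ to be a $B^+$-eigenvector of some weight $\mu$. Restricting $s$ to $U^+ B/B$ yields a $U^+$-invariant function in $k[U^+]$; since $U^+$ acts there by translation, this function is a scalar multiple of $1$, and by Part (1) its weight is $\lambda$, forcing $\mu = \lambda$. For each simple root $\alpha$, the image of $\varphi_\alpha:\mathrm{SL}_2 \to G$ acts on the finite-dimensional space $V$, and $s$ is a highest-weight vector for this action of weight $\langle \lambda, \alpha^\vee\rangle$; the classification of finite-dimensional $\mathrm{SL}_2$-modules then forces $\langle \lambda, \alpha^\vee\rangle \ge 0$, so $\lambda \in \mathfrak{X}_+$.

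For (b)$\Rightarrow$(a): suppose $\lambda$ is dominant; I aim to show the rational section $1 \in k[U^+]$ extends regularly to $G/B$. The complement of $U^+ B/B$ in $G/B$ has codimension-one components given by the Schubert divisors $D_\alpha$ indexed by simple roots $\alpha$. For each $\alpha$, I restrict to the curve $\mathbb{P}^1_\alpha \subset G/B$ arising from $\varphi_\alpha$: the pulled-back line bundle is $\mathcal{O}(\langle \lambda, \alpha^\vee\rangle)$ on $\mathbb{P}^1$, and the restriction of $1$ extends regularly to this $\mathbb{P}^1$ precisely when $\langle \lambda, \alpha^\vee\rangle \ge 0$, which holds by dominance. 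Thus $1$ has no pole along any $D_\alpha$; by normality of the smooth projective variety $G/B$ combined with codimension considerations, $1$ then extends to a regular section on all of $G/B$. The main obstacle is this last step: identifying the exact order of the canonical section along each Schubert divisor via $\varphi_\alpha$ requires a careful local computation, and confirming that the sign conventions used to define $\mathcal{O}(\lambda)$ line up with the direction ``dominant $\lambda$ produces non-vanishing'' is the easiest thing to mishandle in the argument.
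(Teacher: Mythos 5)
The paper states this proposition without proof, so I'll evaluate your argument on its own terms. Your overall strategy is sound and parts (2)(a)$\Rightarrow$(c) and (2)(c)$\Rightarrow$(b) are essentially complete, modulo one small point: you should observe that the scalar multiple of $1$ obtained by restricting $s$ to the big cell is \emph{nonzero} (because $U^+ B/B$ is dense, so a section vanishing there is zero everywhere); this is what forces $\mu = \lambda$ and makes the $\mathrm{SL}_2$ reduction bite.

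You should flag, rather than sweep under the rug, a sign inconsistency that actually traces back to the paper. With $\mathcal{O}(\lambda) = \mathscr{L}_{G,B}(k_{-\lambda})$ and the equivariance condition $f(xh) = h^{-1}f(x)$, one gets $\tilde f(ub) = \lambda(b)\tilde f(u)$ exactly as you wrote. But then, carrying your own factorisation through, $(t\cdot\tilde 1)(ub) = \tilde 1\bigl((t^{-1}ut)(t^{-1}b)\bigr) = \lambda(t^{-1}b) = \lambda(t)^{-1}\lambda(b) = \lambda(t)^{-1}\,\tilde 1(ub)$, so $1$ has weight $-\lambda$, not $\lambda$. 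For the weight to come out as $\lambda$ one needs $\mathcal{O}(\lambda) = \mathscr{L}_{G,B}(k_\lambda)$ (Jantzen's convention, cf.\ \cite[II.1.10, II.2.1]{jan}), in which case $\tilde f(ub) = \lambda(b)^{-1}\tilde f(u)$ and the same computation yields weight $\lambda$. Your claim ``the weight emerges as $\lambda$'' does not follow from the formula you wrote down two sentences earlier; you should either adopt $\mathscr{L}(k_\lambda)$ outright or carry the sign consistently.

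For (b)$\Rightarrow$(a) your geometric approach is a genuine and attractive alternative to the standard representation-theoretic route (which goes through Frobenius reciprocity for $\mathrm{ind}_B^G$, together with the existence of a finite-dimensional $G$-module having $k_\lambda$ as a $B$-quotient; this is the argument Jantzen uses). To close the gap you acknowledge, the missing ingredient is the following: the curve $C_\alpha = \varphi_\alpha(\mathrm{SL}_2)\cdot eB/B$ meets $D_\alpha$ transversely at the single point $s_\alpha B/B$ and is disjoint from $D_\beta$ for $\beta \neq \alpha$ (the point $s_\alpha B/B$ lies in the cell $B^+ s_\alpha B/B$, which is contained only in the closure $D_\alpha$). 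Because of this, the order of zero/pole of $1\vert_{C_\alpha}$ at $s_\alpha B/B$ coincides with the order of $1$ along the irreducible divisor $D_\alpha$. Then $\deg\bigl(\mathcal{O}(\lambda)\vert_{C_\alpha}\bigr) = \langle\lambda,\alpha^\vee\rangle$ (with the corrected sign convention) gives precisely this order, so dominance of $\lambda$ eliminates all poles, and normality of $G/B$ finishes the extension argument as you say.
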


In light of the above, let us write $\nabla_\lambda = \Gamma(G/B,\mathcal{O}(\lambda))$ in case $\lambda \in \mathfrak{X}_+$. There is an inclusion
$$\nabla_\lambda^{U^+} \hookrightarrow k[U^+]^{U^+} = k \cdot 1,$$
which implies that $\nabla_\lambda$ is indecomposable and that it has a simple socle $$L_\lambda = \text{soc  $\nabla_\lambda$}.$$ Indeed, if there were a non-trivial decomposition
$$\nabla_\lambda = M \oplus N,$$
then we would obtain $\nabla_\lambda^{U^+} = M^{U^+} \oplus N^{U^+}$, which is at least two-dimensional by the non-triviality of each summand (see Prop. \ref{fixed}); similar considerations prove that the socle is simple. Now we are in a position to generalise our findings for $\text{SL}_2$.

\begin{thm}[Chevalley] \label{chev}
There is a bijection,
$$\mathfrak{X}_+ \rightarrow \{ \text{simple $G$-modules} \}/\cong, \quad \lambda \mapsto L_\lambda.$$
\end{thm}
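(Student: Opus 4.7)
The plan is to prove injectivity and surjectivity of $\lambda \mapsto L_\lambda$ separately. Injectivity will follow by recovering $\lambda$ from $L_\lambda$ as the $T$-weight of the one-dimensional space $L_\lambda^{U^+}$, while surjectivity will follow from the fixed-point Proposition \ref{fixed} combined with Frobenius reciprocity between $G$-modules and $B$-modules.

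For injectivity, Proposition \ref{fixed} applied to $U^+$ acting on $L_\lambda$ forces $L_\lambda^{U^+} \ne 0$; since $L_\lambda \subseteq \nabla_\lambda$ and $\nabla_\lambda^{U^+} = k v_\lambda$ by the preceding proposition, we deduce $L_\lambda^{U^+} = k v_\lambda$, which is a line of $T$-weight $\lambda$. As this weight is an isomorphism invariant of $L_\lambda$ as a $T$-module, distinct dominant weights yield nonisomorphic simple modules.

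For surjectivity, let $V$ be a simple $G$-module. Unravelling the associated-sheaf identification \eqref{indsh}, the space $\nabla_\lambda$ is the induced module of $k_{-\lambda}$ from $B$ to $G$, so the standard adjunction between restriction and induction yields a Frobenius reciprocity isomorphism
$$\Hom_G(V, \nabla_\lambda) \cong \Hom_B(V, k_{-\lambda}).$$
Applying Proposition \ref{fixed} to the unipotent radical $U$ of $B$ acting on the $B$-module $V^*$ gives $(V^*)^U \ne 0$; since $T$ normalises $U$ and every $T$-module is a sum of weight spaces (Exercise \ref{torusact}), there is a $T$-weight vector $\phi \in (V^*)^U$, whose $T$-weight we call $\lambda \in \mathfrak{X}$. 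Chasing the signs shows that $\phi$ encodes precisely a nonzero $B$-equivariant map $V \to k_{-\lambda}$, and via the displayed reciprocity this corresponds to a nonzero $G$-map $V \to \nabla_\lambda$. The nonvanishing of $\nabla_\lambda$ then forces $\lambda \in \mathfrak{X}_+$ by the preceding proposition; simplicity of $V$ makes the map an embedding, and since $L_\lambda$ is the simple socle of $\nabla_\lambda$, every nonzero simple submodule of $\nabla_\lambda$ equals $L_\lambda$, yielding $V \cong L_\lambda$.

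The main technical hurdle is keeping the sign conventions straight throughout: the identification $\mathcal{O}(\lambda) = \mathscr{L}(k_{-\lambda})$, the dual action of $T$ on $V^*$, and the direction of Frobenius reciprocity each introduce a sign twist that must be tracked simultaneously to ensure the right $\lambda$ realises $V$. Once this bookkeeping is in place, the argument becomes a short adjunction computation, replacing the more geometric route via a fixed point of $B$ on $\mathbb{P}(V^*)$ and pullback of $\mathcal{O}(1)$.
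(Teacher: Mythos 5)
Your proof is correct; for surjectivity you take a genuinely different route from the one the paper's surrounding discussion points toward. The paper has just sketched the geometric path: a simple $V$ admits a $B^+$-fixed point in $\mathbb{P}(V^*)$, yielding a $G$-equivariant $f\colon G/B^+ \to \mathbb{P}(V^*)$; one identifies $f^*\mathcal{O}(1)$ with some $\mathcal{O}(\lambda)$ and concludes $V \hookrightarrow \nabla_\lambda$, hence $V \cong \mathrm{soc}\,\nabla_\lambda = L_\lambda$. You dispense with the projective embedding and instead use Frobenius reciprocity for $\mathrm{ind}_B^G$, reducing surjectivity to producing a nonzero $B$-semi-invariant functional on $V$, which Proposition~\ref{fixed} applied to $V^*|_U$ supplies directly. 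Both arguments rest on the same unipotent fixed-point input and on the simplicity of $\mathrm{soc}\,\nabla_\lambda$, but yours is more economical, trading the geometry of $\mathbb{P}(V^*)$ for the algebraic adjunction; the paper's route has the compensating virtue of making the appearance of ample line bundles transparent. Your injectivity argument via the $T$-weight of the line $L_\lambda^{U^+}$ agrees with the paper's setup exactly.

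You are right to flag the sign bookkeeping as the real hurdle, and I would urge you to actually carry it out rather than defer it. The paper's convention $\mathcal{O}(\lambda) = \mathscr{L}_{G,B}(k_{-\lambda})$ (with $B$ the negative Borel) is in some tension with its simultaneous claim that the $U^+$-invariant section of $\nabla_\lambda$ has weight $\lambda$ — in Jantzen's standard convention one has $\nabla_\lambda \cong \mathrm{ind}_B^G\, k_\lambda$ with no sign — so whether the target of your $B$-map is $k_\lambda$ or $k_{-\lambda}$ depends crucially on whether you read the weight of $\phi$ as its semi-invariance character (so $\phi(tv) = \lambda(t)\phi(v)$) or as its weight in the contragredient module $V^*$; these differ by a sign, and only one of the two readings makes your displayed map $V \to k_{-\lambda}$ feed Frobenius reciprocity to land in the correct $\nabla_\lambda$. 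Pinning this down is not mere bookkeeping: it is what guarantees you realise $V$ as $L_\lambda$ rather than as $L_{-w_0\lambda}$ or similar. The logical skeleton — produce a nonzero $G$-map $V \to \nabla_\lambda$ for some $\lambda$, deduce $\lambda$ dominant from $\nabla_\lambda \ne 0$, invoke simplicity of the socle — is sound regardless.
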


\begin{exercise}
Prove Theorem \ref{chev} using the ideas in this section.
\end{exercise}

We end this section with some notation for future reference.

\begin{notation} \label{weyl}
For $\lambda \in \mathfrak{X}_+$, let $\Delta_\lambda = \nabla_{-w_0(\lambda)}^*$, where $w_0 \in W_{\text{f}}$ is the longest element. This is the \textit{Weyl module} associated to $\lambda$.
\end{notation}

\section{Kempf vanishing theorem}
\begin{defi}
Let $M$ be a finite-dimensional representation of $G$. We define the \textit{character} of $M$ to be
$$\text{ch $M$} = \sum_{\lambda \in \mathfrak{X}} (\text{dim $M_\lambda$}) e^\lambda \in \mathbb{Z}[\mathfrak{X}],$$
where $M_\lambda = \{ m \in M: \text{$tm = \lambda(t) m$ for all $t \in T$} \}$ is the $\lambda$-eigenspace of $M$ and $\mathbb{Z}[\mathfrak{X}]$ is the group algebra of $\mathfrak{X}$, written multiplicatively so that $e^\lambda e^\mu = e^{\lambda + \mu}$.
\end{defi}
The characters of the modules $\nabla_\lambda$ admit remarkably elegant expressions.

\begin{thm}[Weyl]
Let $W_{\text{f}}$ be the finite Weyl group of $G$ and $\rho = \frac{1}{2} \sum_{\alpha \in R_+} \alpha$. Then, for $\lambda \in \mathfrak{X}_+$,
$$\text{ch $\nabla_\lambda$}  = \frac{\sum_{w \in W} (-1)^{\ell(w)} e^{w(\lambda+\rho)}}{\sum_{w \in W} (-1)^{\ell(w)} e^{w \rho}}.$$
\end{thm}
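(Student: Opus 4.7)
The plan is to reduce the formula to an Euler-characteristic computation on $G/B$. Define
$$\chi(\lambda) = \sum_{i \ge 0} (-1)^i \,\text{ch}\, H^i(G/B, \mathcal{O}(\lambda)) \in \mathbb{Z}[\mathfrak{X}],$$
for arbitrary $\lambda \in \mathfrak{X}$. The strategy has two essentially independent components: first, show $\chi(\lambda) = \text{ch}\,\nabla_\lambda$ when $\lambda \in \mathfrak{X}_+$, and second, prove the closed formula for $\chi(\lambda)$ in $\mathbb{Z}[\mathfrak{X}]$ using only the geometry of $G/B$.

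The first component is Kempf's vanishing theorem: $H^i(G/B, \mathcal{O}(\lambda)) = 0$ for all $i > 0$ and $\lambda \in \mathfrak{X}_+$. Given this, only $\nabla_\lambda = H^0(G/B, \mathcal{O}(\lambda))$ contributes, so $\chi(\lambda) = \text{ch}\,\nabla_\lambda$. I expect this to be the hard step. A characteristic-free proof goes by Frobenius splitting: one constructs a splitting of $\mathcal{O}_{G/B} \to \Fr_* \mathcal{O}_{G/B}$ compatible with the Schubert stratification, so that $H^i(G/B, \mathcal{O}(\lambda)) \injto H^i(G/B, \mathcal{O}(p^n \lambda))$ for all $n \ge 0$; by Serre vanishing applied to the ample bundle $\mathcal{O}(p^n \lambda)$ (for $\lambda$ regular dominant, then reduced to the dominant case by a limiting argument), the higher cohomology must vanish. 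In characteristic zero one can alternatively deduce Kempf vanishing from the Borel--Weil--Bott theorem, but the uniform proof via Frobenius splitting is preferable here given the emphasis on modular representation theory.

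The second component is the computation of $\chi(\lambda)$ as an element of $\mathbb{Z}[\mathfrak{X}]$. Here I would proceed via the Demazure approach, which is natural given the tools developed in the excerpt: for each simple root $\alpha$, factor the projection $\pi_\alpha: G/B \to G/P_\alpha$ through a $\mathbb{P}^1$-bundle, apply the Leray spectral sequence together with a direct $\text{SL}_2$-calculation (mirroring the $\nabla_n$ computation on $\mathbb{P}^1$) to obtain the identity
$$\chi(s_\alpha \cdot \lambda) = -\chi(\lambda), \qquad s_\alpha \cdot \lambda := s_\alpha(\lambda + \rho) - \rho.$$
Iterating, one finds that $\chi(\lambda) \cdot \sum_{w \in W_\text{f}} (-1)^{\ell(w)} e^{w\rho}$ is $W_\text{f}$-anti-invariant with prescribed leading term $e^{\lambda + \rho}$, which forces
$$\chi(\lambda) \sum_{w \in W_\text{f}} (-1)^{\ell(w)} e^{w\rho} = \sum_{w \in W_\text{f}} (-1)^{\ell(w)} e^{w(\lambda + \rho)}.$$
Combining with $\chi(\lambda) = \text{ch}\,\nabla_\lambda$ from the first component yields Weyl's formula.

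An attractive alternative for the second component is the Atiyah--Bott--Woods-Hole holomorphic Lefschetz fixed-point formula applied to the $T$-action on $G/B$, whose fixed points are indexed by $W_\text{f}$ via the Bruhat decomposition; the local contributions give $\chi(\lambda) = \sum_{w} e^{w\lambda}/\prod_{\alpha \in R_+}(1 - e^{-w\alpha})$, which simplifies to the stated ratio by the Weyl denominator identity. Either route works; the Demazure route is more algebraic and stays closer to the concrete $\text{SL}_2$-computation already performed on $\mathbb{P}^1$ in the excerpt, so I would favour it. The principal obstacle remains Kempf vanishing, with the Demazure identity a close runner-up in terms of technical content.
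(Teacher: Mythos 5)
Your overall architecture—Euler characteristic $\chi(\lambda)$ is characteristic-independent, Kempf vanishing collapses it to $\mathrm{ch}\,\nabla_\lambda$ in the dominant case, and a Demazure-type recursion computes $\chi(\lambda)$—is the standard modern route (it is essentially \cite[\S II.5]{jan}) and is correct in outline. But you diverge from the paper in the two places where the paper actually says something.

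First, the paper does not prove the derivation ``Weyl from Kempf'' at all; it simply asserts that the formula follows from the vanishing theorem, citing \cite[10.4]{hal} for the $k=\mathbb{C}$ case. Your Demazure-route derivation, via the $\mathbb{P}^1$-bundles $G/B\to G/P_\alpha$ and the sign-flip $\chi(s_\alpha\bullet\lambda)=-\chi(\lambda)$, fills this in cleanly and stays within characteristic $p$; the alternative the paper tacitly leaves open is to observe that Kempf vanishing plus flat base change over $\mathbb{Z}$ lets one pull the formula back from $\mathbb{C}$. Both are legitimate; yours is more self-contained and better suited to the modular setting.

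Second, and more substantially, you prove Kempf vanishing by Frobenius splitting (Mehta--Ramanathan), whereas the paper's sketch uses the simplicity of the Steinberg modules $\mathrm{St}_m=\nabla_{(p^m-1)\rho}$ together with the Andersen--Haboush isomorphism $\mathrm{Fr}_*^m\mathcal{O}((p^m-1)\rho)\cong\mathrm{St}_m\otimes\mathcal{O}$ and the projection formula to write $H^i(G/B,\mathcal{O}((p^m-1)\rho+p^m\gamma))\cong\mathrm{St}_m\otimes H^i(G/B,\mathcal{O}(\gamma))$, then kills the left side by Serre vanishing. These are genuinely different engines: yours replaces the representation-theoretic input (Steinberg simplicity, Andersen--Haboush) by a purely geometric splitting of $\mathcal{O}_{G/B}\to\mathrm{Fr}_*\mathcal{O}_{G/B}$, with the injection $H^i(G/B,\mathcal{O}(\lambda))\hookrightarrow H^i(G/B,\mathcal{O}(p^n\lambda))$ doing the work before Serre vanishing. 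The trade-off is that Frobenius splitting handles the singular dominant case less directly, while the paper's projection-formula argument applies uniformly to all $\gamma\in\mathfrak{X}_+$ at once.

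One point to tighten: your ``limiting argument'' for dominant but non-regular $\lambda$ is a hand-wave. Serre vanishing needs $\mathcal{O}(\lambda)$ ample, which fails on a wall, and there is no honest limit to take inside $\mathbb{Z}[\mathfrak{X}]$. The standard fix is to descend to the partial flag variety $G/P_\lambda$ on which $\mathcal{O}(\lambda)$ becomes ample, use $R\pi_*\mathcal{O}_{G/B}=\mathcal{O}_{G/P_\lambda}$ (itself available from Frobenius splitting) to identify $H^i(G/B,\mathcal{O}(\lambda))=H^i(G/P_\lambda,\mathcal{L}_\lambda)$, and then run your injection-plus-Serre argument on $G/P_\lambda$. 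With that repair the proposal is sound.
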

At first glance, the right-hand side is only an element of the quotient field of $\mathbb{Z}[\mathfrak{X} + \mathbb{Z} \rho]$, but it turns out to lie in $\mathbb{Z}[\mathfrak{X}]$ and agree with the stated character. Many proofs exist for this formula in the case $k = \mathbb{C}$; see \cite[10.4]{hal} for one account. For arbitrary $k$, the result is derived as a consequence of the next theorem, which is of fundamental interest for us.

\begin{thm}[Kempf]
Let $\lambda \in \mathfrak{X}_+$. Then
$$H^i(G/B,\mathcal{O}(\lambda)) = 0$$
for all $i > 0$.
\end{thm}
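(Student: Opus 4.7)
The plan is to deploy Frobenius splitting of the flag variety, following Mehta--Ramanathan. This fits naturally with the characteristic $p$ emphasis of these lectures and gives a uniform treatment; in characteristic $0$ the theorem reduces via Kodaira vanishing, but Frobenius splitting gives the cleanest unified argument.

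I would first establish that $G/B$ is \emph{Frobenius split}: the canonical inclusion $\mathcal{O}_{G/B} \hookrightarrow F_*\mathcal{O}_{G/B}$ admits an $\mathcal{O}_{G/B}$-linear retraction $\varphi$. By Grothendieck duality for the (finite, flat) Frobenius morphism $F$, giving such a $\varphi$ is equivalent to producing a suitably non-degenerate global section of $\omega_{G/B}^{1-p}$, where $\omega_{G/B}$ is the canonical bundle. A direct weight calculation at the base point identifies $\omega_{G/B}^{-1} \cong \mathcal{O}(2\rho)$, so one seeks a section of $\mathcal{O}(2(p-1)\rho)$. The desired section is constructed from the product $\prod_{\alpha \in R_+} f_\alpha^{p-1}$, where each $f_\alpha$ is the coordinate on the big cell $U^+B/B \cong \mathbb{A}^{|R_+|}$ arising from the root homomorphism $x_\alpha$; one checks this extends to a global section whose $(p-1)$-st order jet at the distinguished point $B/B$ has the maximality property required for the duality argument to deliver a splitting.

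Next, I would prove the basic descent lemma: on any projective Frobenius split scheme $X$ and any line bundle $L$, the pullback $H^i(X,L) \to H^i(X, L^p)$ induced by $F$ is injective. This is purely formal: the projection formula gives $F_* L^p \cong L \otimes F_* \mathcal{O}_X$, and $\mathrm{id}_L \otimes \varphi$ exhibits $L$ as a direct summand of this sheaf. Taking cohomology, and using that $F$ is affine (so $H^i(X, F_* L^p) = H^i(X, L^p)$), gives the claimed injection, which iterates to injections into $H^i(X, L^{p^n})$ for every $n \geq 1$.

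For the conclusion, I would separate the strictly dominant and boundary cases. If $\lambda$ is in the interior of $\mathfrak{X}_+$, then $\mathcal{O}(\lambda)$ is ample on $G/B$ (one constructs a closed embedding using sufficiently high tensor powers and the existence of $\nabla_{N\lambda}$), so Serre vanishing yields $H^i(G/B, \mathcal{O}(p^n\lambda)) = 0$ for $i > 0$ and $n$ sufficiently large; the iterated descent lemma then transports this back to $\mathcal{O}(\lambda)$. For $\lambda$ dominant but lying on walls, let $P \supseteq B$ be the smallest standard parabolic to which $\lambda$ extends as a character. Then $\mathcal{O}(\lambda) \cong \pi^*\mathcal{O}_{G/P}(\lambda)$ along $\pi: G/B \to G/P$, the fibres $P/B$ are iterated $\mathbb{P}^1$-bundles on which the trivial bundle has no higher cohomology, so $R^{>0}\pi_* \mathcal{O}(\lambda) = 0$, and the Leray spectral sequence combined with the strictly dominant case applied on the (Frobenius split, by the parallel recipe) variety $G/P$ closes the argument. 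The main obstacle is the first step: both the construction of the explicit section and the invocation of Grothendieck duality for Frobenius require genuine algebro-geometric input beyond the formal manipulations in the remainder of the proof.
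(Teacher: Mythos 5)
Your proposal takes a genuinely different route from the one sketched in the paper. The paper relies on (i) simplicity of the Steinberg modules $\text{St}_m = \nabla_{(p^m-1)\rho}$, (ii) the Andersen--Haboush isomorphism $\text{Fr}_*^m\mathcal{O}((p^m-1)\rho) \cong \text{St}_m \otimes \mathcal{O}$, and (iii) the projection formula together with Serre vanishing applied to $\mathcal{O}((p^m-1)\rho + p^m\gamma) \cong \mathcal{O}(-\rho)\otimes\mathcal{O}(\rho+\gamma)^{\otimes p^m}$. You instead invoke the Mehta--Ramanathan Frobenius splitting of $G/B$ and the formal descent lemma $H^i(X,L)\hookrightarrow H^i(X,L^{p^n})$. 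Both are standard modern proofs and they are closely related at the core: the section of $\omega_{G/B}^{1-p}\cong\mathcal{O}(2(p-1)\rho)$ that produces the splitting is essentially (the square of) a lowest weight vector of $\text{St}_1$, so the same Steinberg-type data is doing the work in both. The visible structural difference is that the paper's version treats all dominant $\gamma$ uniformly (the $\rho$-shift makes the relevant bundle strictly dominant even when $\gamma$ sits on a wall), whereas your argument must split into a strictly dominant case and a boundary case.

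The boundary case is where there is a genuine gap. You assert that the fibers $P/B$ of $\pi\colon G/B\to G/P$ are ``iterated $\mathbb{P}^1$-bundles on which the trivial bundle has no higher cohomology.'' That description is false in general: $P/B$ is the full flag variety of the Levi of $P$, and already $\text{SL}_3/B_{\text{SL}_3}$ (which arises as a fiber once $G$ has semisimple rank $\ge 3$) is a $\mathbb{P}^1$-bundle over $\mathbb{P}^2$ with Poincar\'e polynomial $1+2q+2q^2+q^3$, so it is not an iterated $\mathbb{P}^1$-bundle over a point. More seriously, the vanishing $R^{>0}\pi_*\mathcal{O}_{G/B}=0$ you need is equivalent to $H^{>0}(P/B,\mathcal{O})=0$, which is precisely the $\lambda=0$ instance of Kempf vanishing for the Levi of $P$ --- the very boundary case you are trying to establish --- so as written the argument is circular. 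Two standard repairs: either induct on the semisimple rank of $G$ (so that the fiberwise vanishing is available by inductive hypothesis), or, staying closer to the Mehta--Ramanathan framework, strengthen step one to show that $G/B$ is split \emph{compatibly} with the ample anticanonical divisor $D$ given by the union of all codimension-one Schubert and opposite Schubert varieties (so $D\in|\mathcal{O}(2\rho)|$). The compatible splitting refines the descent lemma to $H^i(X,L)\hookrightarrow H^i\bigl(X, L^{p^n}\otimes\mathcal{O}((p^n-1)D)\bigr)$, and since $D$ is ample and $L=\mathcal{O}(\lambda)$ is nef for $\lambda$ dominant, Serre vanishing applies for $n\gg 0$. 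This handles all dominant $\lambda$ at once, eliminates the case split entirely, and restores the uniformity the paper's Steinberg-twist argument enjoys.
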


We now sketch a proof of this theorem, assuming two black boxes; the original paper is \cite{kempf}, and another account is available in \cite[II.4]{jan}. To begin, we introduce some of the main characters in our story, the \textit{Steinberg modules}
\begin{equation} \label{stmod}
\text{St}_m = \nabla_{(p^m-1)\rho}, \quad m \ge 1.
\end{equation}
These are simple modules whose dimensions are $p^{m |R_+|}$. The fact that these induced modules are simple is our first black box; a beautiful proof is given in \cite{Kempf1981}.

Recall the Frobenius morphism Fr from the previous lecture, particularly $$\text{Fr}: G/B \to G/B.$$ The following isomorphism (our second black box) is due to Anderson \cite{and1} and Haboush \cite{hab}:
\begin{equation} \label{andhab}
\text{Fr}_*^m(\mathcal{O}((p^m-1)\rho)
\cong \text{St}_m \otimes \mathcal{O}.
\end{equation}
Now, for any $\gamma \in \mathfrak{X}_+$,
$$(\text{Fr}^m)^* \mathcal{O}(\gamma) = \mathcal{O}(p^m \gamma).$$
Using the projection formula \cite[Exercise II.8.3]{har} in combination with \eqref{andhab}, we find
$$(\text{Fr}^m)_*\mathcal{O}((p^m-1) \rho + p^m \gamma) = \text{Fr}_*^m \mathcal{O}((p^m-1) \rho) \otimes \mathcal{O}(\gamma) \cong \text{St}_m \otimes \mathcal{O}(\gamma).$$
Taking cohomology yields
$$H^i(G/B,\mathcal{O}((p^m-1)\rho + p^m \gamma)) \cong \text{St}_m \otimes H^i(G/B,\mathcal{O}(\gamma)).$$
But $\mathcal{O}(2 \rho)$ is ample, so by Serre's vanishing theorem \cite[III.5.2]{har}, the left-hand side is zero for $i \ne 0$ and sufficiently large $m$; hence the right-tensor factor on the right-hand side is necessarily zero.

\begin{exercise}
We have (a variant of) the Bruhat decomposition,
    $$G/B = \bigsqcup_{w \in W_{\text{f}}} B^+ \cdot xB/B,$$
    where each $B^+ \cdot xB/B \cong\mathbb{A}^{\ell(w_o) - \ell(x)}$. 
    \begin{enumerate}
        \item Use this decomposition to determine $\text{Pic}(G/B).$\footnote{Hint: You might want to abstract the properties of $G/B$. Suppose that $X$ is an algebraic variety containing an open dense affine space, whose complement is a union of divisors. What can you say about its Picard group?}
        \item Determine the class of $\mathcal{O}(\lambda)$ in the Picard group in terms of the previous description.
        \item All equivariant line bundles on $G/B$ have the form $\mathcal{O}(\lambda)$. Use this to determine when a line bundle on $G/B$ admits an equivariant lift, in terms of the root datum of $G$.
    \end{enumerate}
\end{exercise}

\makeatletter
\let\savedchap\@makeschapterhead
\def\@makeschapterhead{\vspace*{-1cm}\savedchap}
\chapter*{Lecture III}
\let\@makeschapterhead\savedchap

\section{Steinberg tensor product theorem}
\subsection{Motivation from finite groups}
Suppose momentarily that $G$ is a finite group with a normal subgroup $N$:
\begin{equation} \label{cliffseq}
1 \to N \to G \to G/N.
\end{equation}
Let $\sigma_g: G \to G$ denote conjugation by $g \in G$. Pulling back along $\sigma_g$ defines a functor $V \mapsto V^g$ on $G$-modules, whose image we call the \textit{twist} of $V$ by $g$.

Part of Clifford's theorem for finite groups states that if $V$ is a simple $G$-module, then $V|_N$ is a semi-simple $N$-module and all of its irreducible summands are $G$-conjugate \cite[\textsection 5.3]{webb}. With this fact in mind, let us take one additional assumption.

\begin{assumption} \label{cliff}
All simple $N$-modules extend to $G$-modules.
\end{assumption}

A consequence of Assumption \ref{cliff} is that every simple $N$-module $W$ is fixed by $G$, in the sense that $W^g \cong W$ for all $g \in G$. In particular, all the irreducible summands of $V|_N$ are isomorphic when $V$ is a simple $G$-module.

So, suppose in this setting that $V' \subseteq V$ is an irreducible summand of $V$ as an $N$-module. It then decomposes into copies of $V'$ with some multiplicity:
$$V \cong V' \oplus \cdots \oplus V'.$$
Then $$\text{Hom}_N(V',V) \otimes V' \to V, \quad f \otimes v' \mapsto f(v')$$
is an isomorphism of $G$-modules. (Indeed, it is easily seen to be surjective, and then we can compare dimensions.) Hence, in this scenario, we can conclude that every simple $G$-module arises as the tensor product of a simple $G/N$-module and an irreducible $N$-module extending to $G$. We will now witness a similar phenomenon in the setting of reductive groups.

\subsection{Back to reductive groups}
Let us return now to our usual level of generality, where $G$ is a reductive algebraic $k$-group for $k = \overline{k}$ of characteristic $p > 0$. Unless otherwise stated, the following assumption will be in force from here on.

\begin{assumption} \label{simple}
$G$ is semi-simple and simply connected.
\end{assumption}

We call a weight $\lambda \in \mathfrak{X}_+$ \textit{p-restricted} in case $\langle \lambda, \alpha^\vee \rangle < p$ for all simple roots $\alpha$; their subset is denoted $\mathfrak{X}_{< p} \subseteq \mathfrak{X}$. By our discussion in \textsection3, there is a Frobenius exact sequence
$$1 \to G_1 \to G \to G^{(1)} \to 1.$$
We would like to view this sequence as an analogue of \eqref{cliffseq}; in this light, the analogue of Assumption \ref{cliff} for reductive groups is the following result:

\begin{thm}[Curtis \cite{cur}]
If $\lambda \in \mathfrak{X}_{< p}$ then $L_\lambda|_{G_1}$ is simple, and moreover all simple $G_1$-modules occur in this way. Hence all simple $G_1$-modules extend to $G$.
\end{thm}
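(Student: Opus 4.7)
The plan is to exploit the equivalence $\mathrm{Rep}(G_1) \cong u(\mathfrak{g})\text{-mod}$ of Proposition~\ref{ker}(2). A first step is to set up the highest-weight classification of simple $u(\mathfrak{g})$-modules. Using the PBW-type decomposition $u(\mathfrak{g}) = u(\mathfrak{u}^-) \otimes u(\mathfrak{t}) \otimes u(\mathfrak{u}^+)$, one defines, for each $\mu \in X(T_1) = \mathfrak{X}/p\mathfrak{X}$, a baby Verma module $Z_1(\mu) = u(\mathfrak{g}) \otimes_{u(\mathfrak{b})} k_\mu$ with unique simple quotient $L_1(\mu)$; a standard weight-space argument shows that the $L_1(\mu)$ form a complete, non-redundant list of the simple $u(\mathfrak{g})$-modules as $\mu$ ranges over $X(T_1)$.

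The core of the proof is to identify $L_\lambda|_{G_1}$ with $L_1(\bar\lambda)$, where $\bar\lambda \in X(T_1)$ is the reduction of $\lambda$. First I would show that $L_\lambda|_{G_1}$ is semisimple by a Clifford-style observation: its $G_1$-socle is intrinsic and therefore stable under conjugation by $G$ (since $G_1$ is normal in $G$), hence is a nonzero $G$-submodule of $L_\lambda$ and must equal $L_\lambda$ by $G$-simplicity. Connectedness of $G$ further forces the $G$-action on the (finite, discrete) set of isomorphism classes of simple $G_1$-constituents to be trivial, so $L_\lambda|_{G_1}$ is isotypic. The isotypic type is $L_1(\bar\lambda)$ because the highest weight vector $v_\lambda \in L_\lambda$ is annihilated by $U^+$, hence by $\mathfrak{u}^+$, and carries $T_1$-weight $\bar\lambda$; therefore $u(\mathfrak{g})\cdot v_\lambda$ is a nonzero quotient of $Z_1(\bar\lambda)$ sitting inside the isotypic component, and hence must be isomorphic to $L_1(\bar\lambda)$.

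The decisive step, and the main obstacle, is to show that the multiplicity in $L_\lambda|_{G_1} \cong L_1(\bar\lambda)^{\oplus m}$ equals $m = 1$ when $\lambda \in \mathfrak{X}_{<p}$. Equivalently, one needs $\dim_k (L_\lambda^{\mathfrak{u}^+})_{\bar\lambda} = 1$: any additional contribution would come from a $T$-weight $\mu$ of $L_\lambda^{\mathfrak{u}^+}$ satisfying $\mu < \lambda$ and $\lambda - \mu \in p\mathfrak{X} \cap \mathbb{Z}_{\ge 0}R_+$. My strategy is a rootwise reduction to $\mathfrak{sl}_2$ through the triples $(e_\alpha,h_\alpha,f_\alpha)$: the $p$-restrictedness bound $\langle \lambda, \alpha^\vee\rangle < p$ on each simple $\alpha$ keeps the $\alpha$-strings emanating from $v_\lambda$ in the ``classical'' range, and this should prevent any new lower $\mathfrak{u}^+$-fixed vector of $T_1$-weight $\bar\lambda$ from being created via reduction modulo $p$ of the Kostant--Chevalley $\mathbb{Z}$-form. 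An alternative is to transfer the question to the Weyl module $\Delta_\lambda$ (see Notation~\ref{weyl}): the surjection $\Delta_\lambda \twoheadrightarrow L_\lambda$ together with characteristic-zero control of $(\Delta_\lambda^{\mathfrak{u}^+})_{\bar\lambda}$ pins down the required one-dimensionality.

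To finish, I would match the two sides by counting. Simple connectedness of $G$ ensures that the fundamental weights form a $\mathbb{Z}$-basis of $\mathfrak{X}$, so reduction mod $p$ gives a bijection $\mathfrak{X}_{<p} \xrightarrow{\sim} X(T_1)$. The assignment $\lambda \mapsto L_\lambda|_{G_1} \cong L_1(\bar\lambda)$ is therefore a bijection from $\mathfrak{X}_{<p}$ onto the isomorphism classes of simple $G_1$-modules, and the extension statement is automatic: each simple $G_1$-module is the restriction of the $G$-module $L_\lambda$.
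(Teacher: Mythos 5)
The paper does not prove this theorem; it is stated with a citation to Curtis, so there is no internal argument to compare against. Your overall roadmap is the standard one: pass to $u(\mathfrak{g})$-modules via Proposition~\ref{ker}(2), classify simples by baby Verma highest weights, use a Clifford-type argument (normality of $G_1$ in $G$ plus connectedness of $G$) to get that $L_\lambda|_{G_1}$ is semisimple and isotypic, and identify the isotypic type as $L_1(\bar\lambda)$ by observing that $u(\mathfrak{g})\cdot v_\lambda$ is simultaneously a nonzero quotient of $Z_1(\bar\lambda)$ and a submodule of a semisimple module. All of this is correct, and the closing count, using simple connectedness to get the bijection $\mathfrak{X}_{<p}\to X(T_1)$, is also fine.

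The genuine gap is the step you yourself flag as ``the main obstacle'': proving the multiplicity is one, i.e.\ $\dim_k L_\lambda^{\mathfrak{u}^+} = 1$. Neither of the two routes you sketch actually closes it. In the $\mathfrak{sl}_2$-string route, an extra $\mathfrak{u}^+$-fixed vector $w$ of $T$-weight $\mu = \lambda - p\gamma$ with $0\ne\gamma\in\mathbb{Z}_{\ge0}R_+$ need not lie on any $\alpha$-string through $v_\lambda$, so bounding the simple-root strings emanating from $v_\lambda$ by $\langle\lambda,\alpha^\vee\rangle<p$ does not a priori rule $w$ out; ``should prevent'' is doing all the work. One would instead need to argue, say, that $\mathfrak{u}^+ w=0$ forces $\mathrm{Dist}(U^+)$ to act on $w$ through Frobenius (so $\mathrm{Dist}(U^+)w$ has weights in $\mu + p\,\mathbb{Z}_{\ge0}R_+$), then use $G$-simplicity to generate $L_\lambda$ from $w$ and compare highest weights, and finally extract a contradiction from $p$-restrictedness --- none of which appears in your sketch. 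The Weyl module alternative has a more basic defect: the surjection $\Delta_\lambda\twoheadrightarrow L_\lambda$ does not induce a surjection on $\mathfrak{u}^+$-invariants (taking invariants is only left exact), so controlling $(\Delta_\lambda^{\mathfrak{u}^+})_{\bar\lambda}$ cannot by itself ``pin down'' $(L_\lambda^{\mathfrak{u}^+})_{\bar\lambda}$. Until this step is carried through, the simplicity of $L_\lambda|_{G_1}$ --- and with it the whole theorem --- remains open in your write-up.
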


\begin{Eg}
The $p$ simple $\text{SL}_2$-modules $L_0, \dots, L_{p-1}$ remain simple when considered over $\mathfrak{g} = \mathfrak{sl}_2$.
\end{Eg} 

\begin{thm} \label{tens}
All simple $G$-modules are of the form $L_\lambda \otimes L_\mu^{(1)}$, for $\lambda \in \mathfrak{X}_{<p}$ and $\mu \in \mathfrak{X}_+$.
\end{thm}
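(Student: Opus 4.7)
The plan is to mimic the Clifford-theoretic argument for finite groups sketched in the preceding subsection, now with the normal subgroup $G_1 \trianglelefteq G$ playing the role of $N$ and the Frobenius morphism $G \to G^{(1)}$ (whose kernel is $G_1$) playing the role of the quotient $G \to G/N$. Curtis's theorem supplies the analogue of Assumption~\ref{cliff} in this setting, so the whole scheme of the finite-group argument becomes available.

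First I would fix a simple $G$-module $V$ (finite-dimensional by the corollary to Proposition~\ref{lfinite}) and show that $V|_{G_1}$ is semi-simple and isotypic. If $W \subseteq V|_{G_1}$ is any simple $G_1$-submodule, the sum $\sum_{g \in G(k)} g \cdot W$ is a non-zero $G$-submodule of $V$ and hence equals $V$ by simplicity, so $V|_{G_1}$ is semi-simple. By Curtis's theorem, $W \cong L_\lambda|_{G_1}$ for a unique $\lambda \in \mathfrak{X}_{<p}$. The key point is that because $L_\lambda$ already extends to a $G$-module, the action of $g \in G$ on $L_\lambda$ intertwines $L_\lambda|_{G_1}$ with its conjugate $(L_\lambda|_{G_1})^g$, so $g \cdot W \cong W$ as $G_1$-modules. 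Hence $V|_{G_1}$ is $W$-isotypic, say of multiplicity $n$.

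Next I would set $N := \Hom_{G_1}(L_\lambda, V)$. The natural $G$-module structure on $\Hom_k(L_\lambda, V)$ preserves $N$ because $G_1$ is normal, and a short calculation shows that $G_1$ acts trivially on $N$: for $n \in G_1$ and $f \in N$, $(n \cdot f)(\ell) = n f(n^{-1}\ell) = f(\ell)$. The evaluation map
\[
\mathrm{ev} \colon L_\lambda \otimes N \longrightarrow V, \qquad \ell \otimes f \longmapsto f(\ell),
\]
is $G$-equivariant and non-zero (e.g.\ take $f$ to be any embedding $W \hookrightarrow V|_{G_1}$), hence surjective by simplicity of $V$. Since $\dim N = n$ by isotypicity, a dimension count promotes $\mathrm{ev}$ to an isomorphism.

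Finally, the triviality of the $G_1$-action on $N$ means that $N$ factors through the Frobenius quotient $G \twoheadrightarrow G^{(1)}$, giving $N \cong M^{\text{Fr}}$ for some $G^{(1)}$-module $M$. Any proper $G^{(1)}$-submodule of $M$ would pull back to a proper $G$-submodule of $L_\lambda \otimes M^{\text{Fr}} \cong V$, contradicting simplicity, so $M$ is simple and therefore $M \cong L_\mu$ for a unique $\mu \in \mathfrak{X}_+$ (applying Theorem~\ref{chev} to $G^{(1)}$, which has the same root datum as $G$). This yields $V \cong L_\lambda \otimes L_\mu^{(1)}$. I expect the hardest single step to be the isotypicity of $V|_{G_1}$: it uses not just Curtis's theorem but crucially the stability of $L_\lambda|_{G_1}$ under $G$-conjugation, which is only available because $L_\lambda$ extends all the way to $G$. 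Once this is in place, everything else is formal Hom-tensor manipulation.
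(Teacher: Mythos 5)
Your argument is correct and is exactly the Clifford-theoretic proof the paper has in mind: \S7.1 sets up precisely this blueprint for finite groups, and the paper simply states Theorem~\ref{tens} afterward, noting that it ``fits nicely in analogy,'' with Curtis's theorem supplying the extension hypothesis for $G_1 \trianglelefteq G$ in place of Assumption~\ref{cliff}. You have filled in the details the paper leaves implicit (isotypicity of $V|_{G_1}$ via $G$-stability of $L_\lambda|_{G_1}$, the evaluation map $L_\lambda \otimes \Hom_{G_1}(L_\lambda,V) \to V$, trivial $G_1$-action on the Hom-space, descent to a $G^{(1)}$-module) in the same order and spirit, so this is the paper's intended proof rather than a different route.

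A small point of rigour worth being aware of, though not a gap: your argument that $\sum_{g \in G(k)} g \cdot W$ is a $G$-submodule (and similarly that $\Hom_{G_1}(L_\lambda, V)$ is $G$-stable) is phrased in terms of $k$-points. That is legitimate here because a semi-simple group is smooth, hence reduced, and $k$ is algebraically closed, so $G(k)$-stability of a subspace of a finite-dimensional $G$-module implies $G$-stability; but one should note this or instead argue scheme-theoretically (e.g.\ by observing that the $G_1$-socle of $V$ is automatically a $G$-submodule by normality of $G_1$). Likewise the dimension count $\dim N = n$ silently uses Schur's lemma for the simple $G_1$-module $W$, which is available since $k = \overline{k}$ and $W$ is a finite-dimensional simple $u(\mathfrak{g})$-module.
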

Notice this theorem fits nicely in analogy to the conclusion of \textsection7.1: as there, it expresses simple $G$-modules as tensor products of simple modules over a quotient (namely $L_\mu^{(1)}$ over  $G/G_1 \cong G^{(1)}$) and simple modules over a normal subgroup which admit an extension to $G$ (namely $L_\lambda$ over $G_1$). By induction on Theorem \ref{tens}, we obtain a well-known and beautiful result:

\begin{thm}[Steinberg \cite{ste}]
Let $\lambda \in \mathfrak{X}_+$ and write $\lambda = \lambda_0 + p \lambda_1 + \cdots + p \lambda_m$, for $\lambda_i \in \mathfrak{X}_{<p}$. Then 
$$L_\lambda \cong L_{\lambda_0} \otimes L_{\lambda_1}^{(1)} \otimes \cdots L_{\lambda_m}^{(m)}.$$
\end{thm}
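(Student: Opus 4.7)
The plan is to argue by induction on $m$. The base case $m=0$ is trivial, so the work lies in the inductive step.

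For the inductive step, I would first observe that any $\lambda \in \mathfrak{X}_+$ admits a decomposition $\lambda = \lambda_0 + p\mu$ with $\lambda_0 \in \mathfrak{X}_{<p}$ and $\mu \in \mathfrak{X}_+$. Under Assumption~\ref{simple}, $G$ is simply connected so $\mathfrak{X}$ is the full weight lattice, spanned by the fundamental weights $\varpi_i$ dual to the simple coroots $\alpha_i^\vee$. For each simple coroot write $\langle \lambda, \alpha_i^\vee\rangle = r_i + p q_i$ with $0 \le r_i < p$; define $\lambda_0 = \sum r_i \varpi_i$ and $\mu = \sum q_i \varpi_i$. Both are dominant, $\lambda_0 \in \mathfrak{X}_{<p}$, and $\lambda = \lambda_0 + p\mu$. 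Applied recursively, this produces the $p$-adic expansion $\mu = \lambda_1 + p\lambda_2 + \cdots + p^{m-1}\lambda_m$ of shorter length.

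Now I would apply Theorem~\ref{tens} to get $L_\lambda \cong L_{\lambda_0} \otimes L_\mu^{(1)}$, and apply the inductive hypothesis to $L_\mu$, obtaining
\[
L_\mu \cong L_{\lambda_1} \otimes L_{\lambda_2}^{(1)} \otimes \cdots \otimes L_{\lambda_m}^{(m-1)}.
\]
Twisting both sides by Frobenius and substituting into the displayed isomorphism above would then produce the desired identification, provided I can commute the Frobenius twist past the tensor product and across multiple twists, i.e.\ $(V \otimes W)^{(1)} \cong V^{(1)} \otimes W^{(1)}$ and $(V^{(i)})^{(1)} \cong V^{(i+1)}$.

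The only nontrivial point is really this last compatibility of Frobenius twists with tensor products. It follows formally from the fact that Frobenius twist is pullback along the group homomorphism $\mathrm{Fr}\colon G \to G^{(1)}$ (constructed in Section~\ref{sec:frob}) and that pullback along any group scheme homomorphism is a symmetric monoidal functor on representations — concretely, on comodules the coaction on $V^{(1)} \otimes W^{(1)}$ is the composite of the coactions with the Hopf algebra map $k[G^{(1)}] \to k[G]$ induced by Fr, which is an algebra map and hence respects the tensor structure. With this in hand, the induction closes immediately, and the main obstacle is really just bookkeeping with the iterated twists rather than any deep input.
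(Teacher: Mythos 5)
Your proof is correct and follows exactly the route the paper indicates: the paper derives this from Theorem~\ref{tens} with the one-line remark ``By induction on Theorem \ref{tens},'' and you have filled in that induction faithfully, including the justification (via simple-connectedness and fundamental weights) that the $p$-adic decomposition exists and the observation that Frobenius twist is a symmetric monoidal functor so it commutes with tensor products.
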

Importantly, it is a consequence of Assumption \ref{simple} that any $\lambda \in \mathfrak{X}_+$ admits the decomposition into $p$-restricted digits as described. 
\begin{remark}
One of the great uses of Steinberg's $\otimes$-theorem is that it reduces many questions (for instance, concerning characters) to a finite set of modules: the $L_\gamma$ for $\gamma \in \mathfrak{X}_{<p}$.
\end{remark}

\begin{Eg}
The theorem provides us a complete answer to the question of characters for $G = \text{SL}_2$. Define $$\text{Fr}: \mathbb{Z}[\mathfrak{X}] \to \mathbb{Z}[\mathfrak{X}], \quad e^\lambda \mapsto e^{p \lambda}.$$
Each $\lambda = n \in \mathbb{N}$ can be written $n = \sum_{i \ge 0} \lambda_i p^i$ with $0 \le \lambda_i < p$. Then, decomposing $L_n$ into a tensor product by Steinberg's theorem and taking characters, we obtain
$$\text{ch} \, L_n = \prod_{i \ge 0} \text{ch} \, L_{\lambda_i}^{(\text{Fr})^i} = \prod_{i \ge 0} (e^{-\lambda_i} + e^{-\lambda_i+2} + \cdots + e^{\lambda_i})^{(\text{Fr})^i}.$$
For instance, we have $p^m - 1 = (p-1) + (p-1)p + \cdots + (p-1)p^{m-1}$, so
\begin{align*}
    \text{ch} \, \text{St}_m = \text{ch} \, L_{p^m - 1} &= \left ( \frac{e^p - e^{-p}}{e-e^{-1}} \right ) \left ( \frac{e^p - e^{-p}}{e-e^{-1}} \right )^{(\text{Fr})} \cdots \left ( \frac{e^p - e^{-p}}{e-e^{-1}} \right )^{{(\text{Fr})}^{m-1}} \\
    &= \frac{e^{p^m} - e^{-p^m}}{e-e^{-1}};
\end{align*}
here we refer to the Steinberg module defined in \eqref{stmod}.
\end{Eg}

\begin{exercise} \label{simpchar}
Let $G = \text{SL}_2$.
    \begin{enumerate}
        \item Explicitly write out the characters of $L_m$ for $0 \le m \le p^2 - 1$.
        \item Hence express the characters of these $L_m$ in terms of the modules $\nabla_n$.
        \item Repeat this for $p^2$ and record your observations. What changes? 
        \item For bonus credit, repeat the first part for $0 \le m \le p^3 - 1$.
    \end{enumerate}
\end{exercise}
\section{Kazhdan--Lusztig conjecture}
Our next main goal is to state the Lusztig conjecture on $\text{ch} \, L_\lambda$. This formula was motivated by the earlier Kazhdan--Lusztig conjecture, which we will describe in this section after recalling certain elements of the theory of complex semi-simple Lie algebras.

\subsection{Background on complex semi-simple Lie algebras}
A comprehensive reference for this subsection is \cite{hum2}. Fix $\mathfrak{g}$ a complex semi-simple Lie algebra, containing
$$\mathfrak{h} \subseteq \mathfrak{b}^+ \supseteq \mathfrak{n}^+$$
Cartan and Borel subalgebras, along with its nilpotent radical, respectively. Recall that $\mathfrak{b}^+ \cong \mathfrak{h} \oplus \mathfrak{n}^+$ as vector spaces, that $\mathfrak{n}^+ = [\mathfrak{b},\mathfrak{b}]$, and that
$$\text{Hom}(\mathfrak{b}^+,\mathbb{C}) = \text{Hom}(\mathfrak{b}^+/[\mathfrak{b}^+,\mathfrak{b}^+],\mathbb{C}) = \mathfrak{h}^*.$$
To any $\lambda \in \mathfrak{h}^*$ we associate the \textit{standard} or \textit{Verma} $\mathfrak{g}$-module $\Delta_\lambda = U(\mathfrak{g}) \otimes_{U(\mathfrak{b}^+)} \mathbb{C}_\lambda$. By the PBW theorem, we can write $U(\mathfrak{g}) \cong U(\mathfrak{n}^{-}) \otimes U(\mathfrak{b}^+)$, so that
$$\Delta_\lambda \cong (U(\mathfrak{n}^{-}) \otimes U(\mathfrak{b}^+)) \otimes_{U(\mathfrak{b}^+)} \mathbb{C}_\lambda \cong U(\mathfrak{n}^{-}) \otimes \mathbb{C}_\lambda$$
as $\mathfrak{n}^{-}$-modules.

\begin{defi}[Bernstein--Gelfand--Gelfand]
The \textit{BGG category} $\mathcal{O}$ is the full subcategory of $\mathfrak{g}$-mod consisting of objects $M$ satisfying the following conditions:
\begin{itemize}
    \item $M$ is $\mathfrak{h}$-diagonalisable:
    $$M = \bigoplus_{\lambda \in \mathfrak{h}^*} M_\lambda,$$
    where $M_\lambda = \{ m \in M: \text{$h \cdot m = \lambda(h)m$ for all $h \in \mathfrak{h}$} \}$.
    \item $M$ is locally finite for the action of $\mathfrak{b}^+$: every $m \in M$ is contained in a finite-dimensional $\mathbb{C}$-vector space stable for the action of $\mathfrak{b}^+$.
    \item $M$ is finitely generated over $\mathfrak{g}$.
\end{itemize}
\end{defi}
Conspicuous objects in the category $\mathcal{O}$ are the $\Delta_\lambda$ and the finite-dimensional simple modules. In fact, $\Delta_\lambda$ has a unique simple quotient, since it has a unique maximal submodule; we denote this simple module by $L_\lambda$. 

\begin{prop}
There is a bijection,
$$\mathfrak{h}^* \to \{ \text{simple objects in $\mathcal{O}$} \}/\cong, \quad \lambda \mapsto [L_\lambda].$$
\end{prop}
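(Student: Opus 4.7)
The plan is to prove each piece in turn: (a) that $\lambda \mapsto [L_\lambda]$ is well-defined, (b) that it is injective, and (c) that it is surjective. The heart of the matter will be extracting a highest weight vector from an arbitrary simple object of $\mathcal{O}$.

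For well-definedness, I would first note that $\Delta_\lambda$ has a unique maximal proper submodule (the sum of all proper submodules, which is proper because every proper submodule is contained in $\bigoplus_{\mu \ne \lambda} (\Delta_\lambda)_\mu$). Hence $L_\lambda$ is simple, and the $\mathfrak{h}$-weight space decomposition and finite generation of $\Delta_\lambda$ descend to show $L_\lambda \in \mathcal{O}$. Local $\mathfrak{b}^+$-finiteness follows because all weights of $\Delta_\lambda$ lie in $\lambda - \mathbb{Z}_{\geq 0} R_+$, so each weight vector generates a finite-dimensional $\mathfrak{b}^+$-stable subspace.

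For injectivity, the key invariant is that the weights of $L_\lambda$ all have the form $\lambda - \sum_{\alpha \in R_+} n_\alpha \alpha$ with $n_\alpha \in \mathbb{Z}_{\geq 0}$, and $\lambda$ itself appears with multiplicity one as the unique maximal weight. Hence $\lambda$ is recovered from $L_\lambda$ as the maximum of its weight support.

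For surjectivity — the step I expect to be the main obstacle, since it requires producing a highest weight vector from the category $\mathcal{O}$ axioms — let $L$ be a simple object of $\mathcal{O}$. Pick any non-zero weight vector $v \in L_\mu$. By local finiteness of the $\mathfrak{b}^+$-action, the subspace $U(\mathfrak{b}^+) \cdot v$ is finite dimensional, and its weights lie in $\mu + \mathbb{Z}_{\geq 0} R_+$. Among these finitely many weights, choose one $\lambda$ that is maximal in the standard partial order. A non-zero weight vector $v_\lambda$ of weight $\lambda$ inside $U(\mathfrak{b}^+) \cdot v$ must satisfy $\mathfrak{n}^+ \cdot v_\lambda = 0$, for otherwise we would produce a vector of strictly higher weight. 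Thus $v_\lambda$ is a highest weight vector of weight $\lambda$.

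The universal property of Verma modules now gives a non-zero $\mathfrak{g}$-homomorphism $\Delta_\lambda \to L$ sending the canonical generator to $v_\lambda$. Its image is a non-zero submodule of $L$, hence equals $L$ by simplicity, so $L$ is a simple quotient of $\Delta_\lambda$; since $\Delta_\lambda$ has a unique simple quotient, $L \cong L_\lambda$. Combined with injectivity, this shows $\lambda \mapsto [L_\lambda]$ is the desired bijection.
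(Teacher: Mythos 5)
The paper states this proposition without supplying a proof (it refers the reader to Humphreys for the details), so there is no ``paper's proof'' to compare against. Your argument is the standard one and is correct. All three steps check out: the unique-maximal-submodule claim for $\Delta_\lambda$ follows because any proper submodule must avoid the one-dimensional highest weight space $\Delta_\lambda^{\phantom{.}}$ of weight $\lambda$, which generates; local $\mathfrak{b}^+$-finiteness of $L_\lambda$ follows since the weights of $U(\mathfrak{b}^+)v_\mu$ are squeezed between $\mu$ and $\lambda$ and each weight space is finite-dimensional; injectivity comes from reading off the unique maximal weight; and for surjectivity, your extraction of a highest weight vector from $U(\mathfrak{b}^+)\cdot v$ via maximality in the finite weight support is exactly the right move, with the universal property of $\Delta_\lambda$ closing the argument. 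A small point worth making explicit: the raising step $e_\alpha v_\lambda$ does stay inside the finite-dimensional space $U(\mathfrak{b}^+)\cdot v$ (that space is $\mathfrak{b}^+$-stable by construction), which is what makes the contradiction with maximality valid. No gaps.
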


\begin{Eg} \label{sl2s}
Consider the simplest non-trivial example, $\mathfrak{g} = \mathfrak{sl}_2$. We can be very explicit about the structure of the standard module $\Delta_\lambda$ in this case. It admits an infinite basis $$v_0 = 1 \otimes 1, \quad v_1 = f \otimes 1, \quad \dots, \quad v_m = \frac{1}{m!} f^m \otimes 1, \quad \dots$$ 
such that the action of $\mathfrak{sl}_2$ can be illustrated as follows:
\[
\begin{tikzcd}
\cdots \arrow[r,bend left,"\lambda-4"] & v_4 \arrow[l,bend left,swap,"5"] \arrow[r,bend left,"\lambda-3"] & v_3 \arrow[l,bend left,swap,"4"]
\arrow[r,bend left,"\lambda-2"] & v_2 \arrow[l,bend left,swap,"3"]
\arrow[r,bend left,"\lambda-1"] & v_1 \arrow[l,bend left,swap,"2"] \arrow[r,bend left,"\lambda"] & v_0 \arrow[l,bend left,swap,"1"]
\end{tikzcd}
\]
In terms of a standard basis $(h,e,f)$ for $\mathfrak{sl}_2$, arrows to the right represent the action of $e$, arrows to the left represent the action of $f$, and labels represent weights. It is visible from this description that if $\lambda \notin \mathbb{Z}_{\ge 0}$, then $\Delta_\lambda$ is simple; otherwise, if $\lambda \in \mathbb{Z}_{\ge 0}$, there is a short exact sequence 
$$0 \to L_{-\lambda - 2} \to \Delta_\lambda \to L_\lambda \to 0.$$
\end{Eg}

\begin{defi}
Recall the element $\rho = \frac{1}{2} \sum_{\alpha \in R_+} \alpha$. The \textit{dot action} of the finite Weyl group $W_{\text{f}}$ on $\mathfrak{h}^*$ (or on $\mathfrak{X}$) is given by $$x \bullet \lambda = x(\lambda + \rho) - \rho.$$
In words, this shifts the standard action of $W_{\text{f}}$ to have centre $-\rho$. Soon, we will use that the dot-action of $W_{\text{f}}$ on $\mathfrak{h}^*$ lifts to an action on the set of polynomial functions on $\mathfrak{h}^*$, which can be identified with $S(\mathfrak{h})$.
\end{defi}

Consider now the action of the universal enveloping algebra's centre $\mathcal{Z} = Z(U(\mathfrak{g}))$ on $\Delta_\lambda = U(\mathfrak{g}) \otimes_{U(\mathfrak{b}^+)} \mathbb{C}_\lambda$. Using that $\mathcal{Z}$ commutes with $U(\mathfrak{h}) \subseteq U(\mathfrak{g})$ and that $v_\lambda = 1 \otimes 1$ spans the $\lambda$-weight space of $\Delta_\lambda$, one can show that $\mathcal{Z}$ acts on $v_\lambda$ (and thus on $\Delta_\lambda = U(\mathfrak{g})v_\lambda)$ through a homomorphism $\chi_\lambda: \mathcal{Z} \to \mathbb{C}$; this map is known as a \textit{central character}. In fact $$\chi_\lambda(z) = \lambda(\pi(z)),$$ where $\pi: \mathcal{Z} \subseteq U(\mathfrak{g}) \to U(\mathfrak{h}) = S(\mathfrak{h})$ is the projection associated to the decomposition $\mathfrak{g} = \mathfrak{n}^{-} \oplus \mathfrak{h} \oplus \mathfrak{n}^+$ via the PBW theorem.
Now, the translation $$\mathfrak{h} \to \mathfrak{h}, \quad \lambda \mapsto \lambda - \rho$$ 
induces a $\mathbb{C}$-algebra automorphism $\sigma: S(\mathfrak{h}) \to S(\mathfrak{h})$. Modifying $\pi$ by $\sigma$ results in the \textit{twisted Harish-Chandra homomorphism} $\sigma \circ \pi: \mathcal{Z} \to S(\mathfrak{h})$, whose immense theoretical importance is suggested by the following theorem.

\begin{thm}[Harish-Chandra]
Let $\mathcal{Z} = Z(U(\mathfrak{g}))$.
\begin{enumerate}
    \item Consider $S(\mathfrak{h})^{(W_{\text{f}},\bullet)}$, the space of invariants in the dot-action of $W_{\text{f}}$ on $S(\mathfrak{h})$, and let $X = \mathfrak{h}^*/(W_{\text{f}},\bullet)$. The twisted Harish-Chandra homomorphism is a $\mathbb{C}$-algebra isomorphism $$\mathcal{Z} \to S(\mathfrak{h})^{(W_{\text{f}},\bullet)} \cong \mathbb{C}[X].$$
\item Every $\mathbb{C}$-algebra morphism $\chi: \mathcal{Z} \to \mathbb{C}$ is a central character $\chi = \chi_\lambda$, and $\chi_\lambda = \chi_\mu$ if and only if $\lambda$ and $\mu$ lie in the same $(W_{\text{f}},\bullet)$-orbit.
\end{enumerate}
\end{thm}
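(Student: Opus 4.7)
The plan is to establish part (1) first---identifying $\mathcal{Z}$ with the dot-invariants---and then obtain part (2) as a formal consequence. Part (1) decomposes into three sub-claims: that $\sigma \circ \pi$ actually lands in $S(\mathfrak{h})^{(W_{\text{f}},\bullet)}$, that it is injective, and that it is surjective onto these invariants. The identification $S(\mathfrak{h})^{(W_{\text{f}},\bullet)} \cong \mathbb{C}[X]$ is a commutative-algebraic restatement, using Chevalley--Shephard--Todd to know $S(\mathfrak{h})^{W_{\text{f}}}$ is a polynomial algebra and hence the coordinate ring of the quotient variety.

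For the dot-invariance of the image, unwinding the definitions gives $\chi_\lambda(z) = \lambda(\pi(z))$, so the assertion reduces to $\chi_\lambda = \chi_{w \bullet \lambda}$ for all $w \in W_{\text{f}}$. I would verify this first for integral dominant $\lambda$ by producing, for each simple reflection $s_\alpha$, a nonzero singular vector of weight $s_\alpha \bullet \lambda$ inside $\Delta_\lambda$; the standard construction takes $f_\alpha^{\langle \lambda + \rho, \alpha^\vee \rangle} \cdot v_\lambda$, yielding an embedding $\Delta_{s_\alpha \bullet \lambda} \hookrightarrow \Delta_\lambda$. Since $\mathcal{Z}$ acts by the same scalar on a submodule as on its ambient Verma module, this forces $\chi_\lambda(z) = \chi_{s_\alpha \bullet \lambda}(z)$ for all $z$ on a Zariski-dense set of $\lambda$. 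Because $\chi_\lambda(z)$ depends polynomially on $\lambda$, the identity extends to all of $\mathfrak{h}^*$, and iterating over simple reflections gives full $W_{\text{f}}$-dot-invariance.

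For bijectivity I would pass to associated graded rings. Under the PBW filtration on $U(\mathfrak{g})$ and the standard filtration on $S(\mathfrak{h})$, the symbol of $\pi$ is the Chevalley restriction map $S(\mathfrak{g})^G \to S(\mathfrak{h})^{W_{\text{f}}}$, which is classically an isomorphism. Since $\sigma$ has symbol the identity and the dot-action and standard action agree at the level of symbols (the shift by $\rho$ is invisible in the associated graded), a standard filtered-to-graded argument by induction on degree lifts the graded isomorphism to an isomorphism $\mathcal{Z} \simto S(\mathfrak{h})^{(W_{\text{f}},\bullet)}$ at the filtered level, yielding both injectivity and surjectivity.

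Given part (1), part (2) is nearly automatic: $\mathbb{C}$-algebra maps $\mathcal{Z} \to \mathbb{C}$ correspond to $\mathbb{C}$-algebra maps $\mathbb{C}[X] \to \mathbb{C}$, hence to closed points of the affine variety $X = \mathfrak{h}^*/(W_{\text{f}},\bullet)$; each such point is a dot-orbit, and evaluation at the orbit of $\lambda$ is precisely $\chi_\lambda$. So every character is some $\chi_\lambda$, and $\chi_\lambda = \chi_\mu$ iff $\lambda$ and $\mu$ lie in the same dot-orbit. I expect the main obstacle to be the dot-invariance step: the construction of singular vectors and, more subtly, the polynomiality argument needed to propagate the identity from the integral lattice to all weights, requiring care about the dependence of $\chi_\lambda$ on $\lambda$. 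The associated graded argument and the deduction of (2) are more routine by comparison.
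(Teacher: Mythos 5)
The paper does not prove this result; it is quoted as classical background (the subsection opens by pointing the reader to Humphreys \cite{hum2}), so there is no in-paper proof to compare against. Your outline is the standard textbook argument, essentially \cite[\S 1.7--1.10]{hum2}, and it is correct: the singular vectors $f_\alpha^{\langle\lambda+\rho,\alpha^\vee\rangle} v_\lambda$ produce embeddings $\Delta_{s_\alpha\bullet\lambda}\hookrightarrow\Delta_\lambda$, forcing $\chi_{s_\alpha\bullet\lambda}=\chi_\lambda$ for dominant integral $\lambda$; since for fixed $z$ the map $\lambda\mapsto\chi_\lambda(z)$ is polynomial, the identity propagates from the Zariski-dense dominant cone to all of $\mathfrak{h}^*$; and the associated-graded comparison with Chevalley restriction yields bijectivity, from which (2) follows formally via the correspondence between characters of $\mathbb{C}[X]$ and closed points of $X$ for a finite quotient.

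The one step that deserves to be made explicit rather than absorbed into ``a standard filtered-to-graded argument'' is the identification $\mathrm{gr}\,\mathcal{Z} = S(\mathfrak{g})^{\mathfrak{g}}$. The inclusion $\mathrm{gr}\,\mathcal{Z}\subseteq S(\mathfrak{g})^{\mathfrak{g}}$ is immediate (the top symbol of a central element is ad-invariant), but the reverse inclusion is exactly what is needed for surjectivity, and it does not come for free from Chevalley restriction; it is usually supplied by the $\mathfrak{g}$-equivariant symmetrization map $S(\mathfrak{g})\to U(\mathfrak{g})$, which takes $S(\mathfrak{g})^{\mathfrak{g}}$ into $\mathcal{Z}$ and is the identity on symbols. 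With only the inclusion in hand, the d\'evissage gives injectivity of $\sigma\circ\pi$ but not surjectivity onto $S(\mathfrak{h})^{W_{\text{f}}}$. As a minor aside, with $\sigma$ taken to be the shift by $-\rho$, what your Verma-embedding argument shows is that $\pi$ itself is dot-invariant, which makes $\sigma\circ\pi$ land in the ordinary $W_{\text{f}}$-invariants; the paper's target $S(\mathfrak{h})^{(W_{\text{f}},\bullet)}$ reflects a different sign convention for the shift, but $\sigma$ intertwines the two actions, so the content and part (2) are unaffected.
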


Given $M \in \mathcal{O}$ and a central character $\chi: \mathcal{Z} \to \mathbb{C}$, let $M^\chi$ denote the space of generalised $\chi$-eigenvectors for the action of $\mathcal{Z}$ on $M$. Since $M$ is generated by finitely many weight vectors, it can be shown that $M$ decomposes as the direct sum of finitely many $M^\chi$. Hence, in view of Harish-Chandra's theorem, the category $\mathcal{O}$ decomposes as
$$\mathcal{O} = \bigoplus_{\chi_\lambda} \mathcal{O}_{\chi_\lambda} = \bigoplus_{\lambda \in \mathfrak{h}^*/(W_{\text{f}},\bullet)} \mathcal{O}_\lambda,$$
where the \textit{block} $\mathcal{O}_\lambda$ consists of modules $M$ with $M = M^{\chi_\lambda}$. Through this decomposition of $\mathcal{O}$ and other considerations, particularly Jantzen's \textit{translation principle} (to be discussed soon in the analogous setting of algebraic groups), the problem of calculating characters in $\mathcal{O}$ can be reduced to the \textit{principal block} $\mathcal{O}_0$.

\begin{remark} \label{misnomer}
As used here, the term \textit{block} is a misnomer: blocks of a category are usually understood to be indecomposable, which need not hold for the $\mathcal{O}_\lambda$. In fact, the relevant condition for $\mathcal{O}_\lambda$ to be a genuine block is that $\lambda$ is integral.
\end{remark}

\subsection{The conjecture and its proof}
We will be prepared to state the Kazhdan--Lusztig conjecture after giving a final piece of notation.

\begin{notation}
For $x \in W_{\text{f}}$, let 
$$L_x = L_{xw_0 \bullet 0}, \quad \Delta_x = \Delta_{xw_0 \bullet 0} \in \mathcal{O}_0,$$
where $w_0 \in W_{\text{f}}$ is the longest element. For example, $L_{\text{id}} = L_{-2 \rho},$ $L_{w_0} = L_0$.
\end{notation}

\begin{conj}[Kazhdan--Lusztig] \label{klconj}
In the Grothendieck group of $\mathcal{O}$, we have
\begin{equation} \label{klc}
[L_x] = \sum_{y \in W_{\text{f}}} (-1)^{\ell(x) - \ell(y)} P_{y,x}(1) [\Delta_y],
\end{equation}
where the $P_{y,x} \in \mathbb{Z}[v,v^{-1}]$ are the Kazhdan--Lusztig polynomials.
\end{conj}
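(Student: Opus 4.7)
The plan is to follow the geometric route of Beilinson--Bernstein \cite{bb} and Brylinski--Kashiwara \cite{BK}, which reduces the conjecture to a statement about perverse sheaves on the flag variety of $\mathfrak{g}$. The first step is the localisation theorem: there is an equivalence of abelian categories between the principal block $\mathcal{O}_0$ and a suitable category of twisted (monodromic) $D$-modules on the flag variety $X = G/B$, where $G$ is the connected group with Lie algebra $\mathfrak{g}$. Stratify $X$ by Schubert cells $X_w^{\circ} = B^{+} \cdot wB/B$ for $w \in W_{\text{f}}$, with locally closed inclusions $j_w : X_w^{\circ} \hookrightarrow X$. Under the equivalence, the Verma module $\Delta_w$ corresponds (up to the appropriate $-\rho$ twist) to the standard $D$-module pushforward $(j_w)_{+} \mathcal{O}_{X_w^{\circ}}$, while the simple quotient $L_w$ corresponds to its minimal extension.

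The second step is to invoke the Riemann--Hilbert correspondence, which identifies these regular holonomic twisted $D$-modules with perverse sheaves on $X$. The simple object $L_w$ is sent to the intersection cohomology sheaf $\ic_w$ of the Schubert variety $\overline{X_w^{\circ}}$, while the Verma $\Delta_w$ is sent to the standard perverse extension $(j_w)_{!} \, k_{X_w^{\circ}}[\ell(w)]$. In the Grothendieck group, the relation expressing $[L_x]$ in terms of standard classes then translates, via the base change formalism for perverse sheaves, into an expansion whose coefficients record the dimensions of the stalks of $\ic_x$ at points of the cells $X_y^{\circ}$, together with a perverse shift sign.

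The third step is the theorem of Kazhdan--Lusztig identifying these stalk dimensions with the Kazhdan--Lusztig polynomials: the generating polynomial of stalk dimensions of $\ic_x$ along $X_y^{\circ}$ is precisely $P_{y,x}(v^2)$ (in the normalisation for which $P_{y,y} = 1$). Specialising to $v = 1$ and collecting the perverse shift contributions $(-1)^{\ell(x) - \ell(y)}$ yields exactly the formula \eqref{klc}.

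The hardest part, by far, is the first step. Setting up Beilinson--Bernstein localisation with the correct twist is delicate: the principal block $\mathcal{O}_0$ corresponds not to ordinary $D$-modules on $G/B$ but to $D$-modules twisted so that the central character $\chi_0$ is realised geometrically, and one must then verify that algebraic induction from $\mathfrak{b}^{+}$ matches geometric extension from Schubert cells --- so that Verma modules are genuinely taken to standard objects and simples to minimal extensions. Once this dictionary and the Riemann--Hilbert equivalence are in place, the remainder of the argument is essentially a clean application of the theory of IC stalks on Schubert varieties.
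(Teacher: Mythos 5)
Your proposal traces the same route as the paper's sketch: Beilinson--Bernstein localisation, the Riemann--Hilbert correspondence, and the Kazhdan--Lusztig computation of $\text{IC}$ stalks (the class equation \eqref{ic} in the paper). One small convention wobble to fix: you send the Verma to $(j_w)_{+}\mathcal{O}_{X_w^\circ}$ in the $D$-module step but to $(j_w)_!\,k_{X_w^\circ}[\ell(w)]$ after Riemann--Hilbert, and these do not match ($j_{+}$ corresponds to $Rj_*$, not $j_!$), so you should use the $D$-module $!$-extension on the first side to agree with the paper's $\Delta_y^{\text{geom}} = j_{y!}(\underline{\mathbb{C}}_{Y_y}[d_y])$.
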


We omit a detailed introduction to the Kazhdan--Lusztig polynomials, instead directing the reader to \cite{soe}. In the sequel, we shall have occasion to refer to the \textit{Hecke algebra} $H = H(W_0,S_0)$ over $\mathbb{Z}[v,v^{-1}]$ associated to a Coxeter system $(W_0,S_0)$, with \textit{standard basis} $\{ h_w \}_{w \in W_0}$ and \textit{Kazhdan--Lusztig basis} $\{ b_w \}_{w \in W_0}$. An introduction to all these objects can be found in \cite{emtw}.

\begin{Eg}
Consider the example of $\mathfrak{sl}_2$. Here we have 
$$b_1 = h_1, \quad b_s = h_s + v,$$
so that $P_{1,1} = 1 = P_{s,s}$,  $P_{1,s} = v$, and $P_{s,1} = 0$ are the relevant Kazhdan--Lusztig polynomials. Hence the conjecture predicts
$$[L_{\text{id}}] = [\Delta_{\text{id}}], \quad [L_s] = -[\Delta_{\text{id}}]+[\Delta_s],$$
which we know by simplicity of the Verma module $\Delta_{w_0 \bullet 0} = \Delta_{-2}$ and by considering the exact sequence 
$$0 \to L_{-2} = L_{\text{id}} \to \Delta_0 \to L_0 = L_s \to 0$$
for the Verma module $\Delta_0 = \Delta_s$ (see Example \ref{sl2s}).
\end{Eg}

Let us make some remarks on the proof of Conjecture \ref{klconj}. Doing so will require us to work with perverse sheaves; we will omit a detailed description of this topic, introducing only the necessary notation.

\begin{notation} \label{icnote}
Suppose $Y = \bigsqcup_{\lambda \in \Lambda} Y_\lambda$ is a $\mathbb{C}$-variety stratified by subvarieties $Y_\lambda$ isomorphic to affine spaces. Then we have the following perverse sheaves on $Y$:
$$\Delta_\lambda^{\text{geom}} = j_{\lambda !} (\underline{\CM}_{Y_\lambda}[d_\lambda]), \quad \text{IC}_\lambda = j_{\lambda !*} (\underline{\CM}_{Y_\lambda}[d_\lambda]), \quad \nabla_\lambda^{\text{geom}} = j_{\lambda *} (\underline{\CM}_{Y_\lambda}[d_\lambda]),$$
where $j_\lambda: Y_\lambda \hookrightarrow Y$ is the inclusion, $d_\lambda$ is the complex dimension of $Y_\lambda$, and underlines denote constant sheaves. Referring instead to its support, $\text{IC}_\lambda$ is sometimes written as $\text{IC}(\overline{Y_\lambda})$.
\end{notation}

We will also need to recall briefly the notion of a differential operator on a commutative $k$-algebra $A$; see \cite[\textsection 15]{mcrob} for a more detailed exposition of this topic. The following is an inductive definition of differential operators on $A$.

\begin{defi}
A $k$-linear endomorphism $P \in \text{End}(A)$ is a \textit{differential operator of order $\le n \in \mathbb{Z}$} if either
\begin{enumerate}
    \item $P$ is a differential operator of order zero, i.e. multiplication by some $a \in A$.
    \item $[P,a]$ is a differential operator of order $\le n - 1$ for all $a \in A$.
\end{enumerate}
We write $D^n(A)$ for the ring of differential operators of order $\le n$, and $$D(A) = \bigcup_n D^n(A).$$
If $X$ is an affine $k$-scheme, we set $D(X) = D(k[X])$. For more general $k$-schemes $X$, this construction sheafifies to give a \textit{sheaf of differential operators} $\mathscr{D}_X$ on $X$.
\end{defi}

We are ready to return to the Kazhdan--Lusztig conjecture. Consider $Y = G/B$ over $\mathbb{C}$, stratified according to the Bruhat decomposition with $\Lambda = W_{\text{f}}$ and $Y_w = BwB/B$. In the Grothendieck group of $G/B$, the theory of perverse sheaves gives us a formula which is similar in appearance to \eqref{klc}:
\begin{equation} \label{ic}
[\text{IC}_x] = \sum_{y \in W_{\text{f}}} (-1)^{\ell(x)-\ell(y)} P_{y,x}(1) [\Delta_y^\text{geom}];
\end{equation}
here the ground ring is $A = \mathbb{C}$ and $\text{IC}_x = \text{IC}_x^{\mathbb{C}}$, etc. Equation \eqref{ic} is a consequence of Kazhdan--Lusztig's calculation of the stalks of intersection cohomology complexes via Kazhdan--Lusztig polynomials \cite{kl80}. 

On the other hand, the Beilinson--Bernstein localisation theorem (introduced in \cite{bb}) posits an equivalence of categories,
\begin{equation} \label{equiv1}
    (\text{$U(\mathfrak{g})/(Z^+))$-mod} \cong \text{$\mathscr{D}_{G/B}$-mod},
\end{equation}
where $Z^+$ is the kernel of the map $Z \to \text{End}(\mathbb{C})$ given by action on the trivial module. In one direction of this equivalence, we \textit{localise} modules for $U(\mathfrak{g})/(Z^+)$ to construct sheaves of $\mathscr{D}_{G/B}$-modules; in the other, we take global sections of $\mathscr{D}_{G/B}$-modules. Requiring certain good behaviour cuts out a \textit{regular holonomic} subcategory $\mathcal{H} \subseteq \mathscr{D}_{G/B}\text{-mod}$, and there is then an equivalence
\begin{equation} \label{equiv2}
\mathcal{H} \to \text{Perv}(G/B,\mathbb{C});
\end{equation}
this is a version of the Riemann--Hilbert correspondence, which in its classical form states that certain differential equations are determined by their monodromy. Under the composite of \eqref{equiv1} and \eqref{equiv2}, $L_x$ and $\Delta_x$ correspond to $\text{IC}_x$ and $\Delta_x^\text{geom}$, respectively, and hence we deduce the Kazhdan--Lusztig conjecture by combining \eqref{ic}, \eqref{equiv1}, and \eqref{equiv2}.

\section{Lusztig conjecture}
\subsection{Affine Weyl group}
The key point of this lecture is to state Lusztig's conjecture for the group $G$ over the field $k$, which parallels Conjecture \ref{klconj}. Recall the root system $(R \subseteq \mathfrak{X}, R^\vee \subseteq \mathfrak{X}^\vee)$ and the finite Weyl group $W_{\text{f}}$ introduced above.

\begin{defi} \leavevmode
\begin{enumerate}
    \item The \textit{affine Weyl group} of the dual root system $(R^\vee \subseteq \mathfrak{X}^\vee, R \subseteq \mathfrak{X})$ is 
    $$W = W_\text{f} \ltimes \mathbb{Z} R.$$
    We can realise $W$ as the subgroup of the affine transformations of $\mathfrak{X}_{\RM}$ generated by the elements
    $$s_{\alpha,m}: \mathfrak{X}_{\RM} \to \mathfrak{X}_{\RM}, \quad s_{\alpha,m}(\lambda) = \lambda - (\langle \lambda, \alpha^\vee \rangle - m) \alpha.$$
    Denote by $t_\gamma = s_{\gamma, 1} s_{\gamma,0} \in W$ the translation by $\gamma \in \mathbb{Z} R$.
    \item The \textit{$p$-dilated dot action} of $W$ on $\mathfrak{X}_{\mathbb{R}}$ is prescribed as follows:
    $$x \bullet_p \lambda = x(\lambda + \rho) - \rho, \quad t_\gamma \bullet_p \lambda = \lambda + p \gamma;$$
    here $x \in W_{\text{f}}$ and $\gamma \in \mathbb{Z} R$. In words, this action shifts the centre to $-\rho$ and dilates translations by a factor of $p$.
    \item The \textit{fundamental alcove} is
$$A_{\text{fund}} = \{ \lambda \in \mathfrak{X}_{\mathbb{R}}: \text{$0 < \langle \lambda + \rho, \alpha^\vee \rangle < p$ for all $\alpha \in R_+$} \}.$$
Its closure in $\mathfrak{X}_{\mathbb{R}}$ is a fundamental domain for the $p$-dilated action of $W$.
\end{enumerate}
\end{defi}

\begin{notation}
We let  $W^{\text{f}}$ and $^{\text{f}} W$ denote fixed sets of minimal coset representatives for $W/W_{\text{f}}$ and $W_{\text{f}} \backslash W$, respectively.
\end{notation}

\begin{Eg}
The following picture for $G = \text{SL}_2$ and $p = 5$ indicates some of the reflection 0-hyperplanes (points) for the dot action of $W$ on $\mathfrak{X}_{\mathbb{R}}$, along with the closure of the fundamental alcove (in blue). Notice the shift by $-\rho = -1$.
\vspace{0.3cm}
\begin{center}
\begin{tikzpicture}[xscale=2,yscale=1]
\draw[very thick, draw=blue] (-0.2,0) -- (0.8,0);
\path [draw=blue, fill=blue] (-0.2,0) circle (2pt);
\path [draw=blue, fill=blue, thick] (0.8,0.0) circle (2pt);
\draw[latex-latex] (-3,0) -- (3,0) ;
\foreach \x in  {-11,-6,-1,0,1,2,3,4,9,14}
\draw[shift={((\x/5,0)},color=black] (0pt,3pt) -- (0pt,-3pt);
\foreach \x in {-11,-6,-1,0,1,2,3,4,9,14}
\draw[shift={(\x/5,0)},color=black] (0pt,0pt) -- (0pt,-3pt) node[below] 
{$\x$};
\end{tikzpicture}
\end{center}
\end{Eg}

\begin{defi}
Suppose $\mathcal{A}$ is an abelian category. A \textit{Serre subcategory} of $\mathcal{A}$ is a non-empty full subcategory $\mathcal{C} \subseteq \mathcal{A}$ such that for any exact sequence
$$0 \to A' \to A \to A'' \to 0$$
in $\mathcal{A}$, $A \in \mathcal{C}$ if and only if $A', A'' \in \mathcal{C}$. Equivalently, $\mathcal{C}$ is closed under taking subobjects, quotients, and extensions in $\mathcal{A}$.
\end{defi}

\begin{prop}[Linkage Principle] \label{lp}
The category $\text{Rep}(G)$ is the direct sum of its \textit{blocks} $\text{Rep}_\lambda(G)$, for $\lambda \in \mathfrak{X}/(W,\bullet_p)$. Here $\text{Rep}_\lambda(G)$ is the Serre subcategory generated by simple modules $L_\mu$ for $\mu \in (W \bullet_p \lambda) \cap \mathfrak{X}_+$. 
\end{prop}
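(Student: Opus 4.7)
The plan is to mimic the block decomposition of BGG category $\mathcal{O}$ described in the previous section, with the universal enveloping algebra's center $Z(U(\mathfrak{g}))$ replaced by the center of the distribution (hyper)algebra $\mathrm{Dist}(G) = \varinjlim (k[G]/\mathfrak{m}_e^n)^*$ and the standard dot action replaced by the $p$-dilated version $\bullet_p$. By \propref{lfinite}, every $G$-module is locally finite, so $\mathrm{Rep}(G)$ sits inside an appropriate category of $\mathrm{Dist}(G)$-modules and its center $Z := Z(\mathrm{Dist}(G))$ acts naturally on every $G$-module.

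First I would exploit the triangular decomposition $\mathrm{Dist}(G) = \mathrm{Dist}(U^-)\cdot \mathrm{Dist}(T)\cdot \mathrm{Dist}(U^+)$, valid in all characteristics, to define a Harish-Chandra-type projection $\pi: Z \to \mathrm{Dist}(T)$ along the two outer factors. The key technical step is to show that after a twist by $\rho$, the image $\pi(Z)$ lies in the invariants of the $p$-dilated dot action $\bullet_p$ of the affine Weyl group $W$ on $\mathrm{Dist}(T)$. Evaluating at $\lambda \in \mathfrak{X}$ then gives a central character $\chi_\lambda: Z \to k$, and one verifies that $\chi_\lambda = \chi_\mu$ if and only if $\mu \in W \bullet_p \lambda$.

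Next I would observe that $Z$ acts on each $\nabla_\mu$ (and hence on the simple socle $L_\mu \subseteq \nabla_\mu$) through the character $\chi_\mu$: the highest-weight line in $\nabla_\mu$ is killed by $\mathrm{Dist}(U^+)$ and $\mathrm{Dist}(T)$ acts on it via $\mu$, so for any $z \in Z$ one has $z\cdot v_\mu = \pi(z)(\mu)\, v_\mu$, and this central character propagates to any $U(\mathfrak{g})$-generated (equivalently, $\mathrm{Dist}(G)$-generated) subquotient. Given any $M \in \mathrm{Rep}(G)$, local finiteness furnishes a canonical decomposition $M = \bigoplus_\chi M^\chi$ into generalised $\chi$-eigenspaces for $Z$; setting $\mathrm{Rep}_\lambda(G) = \{M : M = M^{\chi_\lambda}\}$ gives the claimed direct sum decomposition. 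These subcategories are Serre because generalised eigenspace decompositions are preserved under subobjects, quotients, and extensions, and they contain exactly the simples $L_\mu$ with $\mu \in (W\bullet_p \lambda)\cap \mathfrak{X}_+$.

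The main obstacle will be the Harish-Chandra step in positive characteristic: the center of $\mathrm{Dist}(G)$ is considerably subtler than $Z(U(\mathfrak{g}))$ (one must work with divided-power elements, and the appearance of $\bullet_p$ rather than $\bullet$ reflects Frobenius phenomena through the inclusion $Z(\mathrm{Dist}(G^{(1)})) \hookrightarrow Z(\mathrm{Dist}(G))$), and one needs enough central elements to distinguish all $(W,\bullet_p)$-orbits. A route that sidesteps this is to prove the \emph{strong linkage principle} for Weyl modules directly -- using Kempf vanishing together with the Jantzen sum formula to control composition factors of $\nabla_\lambda$ -- and to then deduce that $\mathrm{Ext}^1_G(L_\mu, L_\nu) \ne 0$ forces $\mu$ and $\nu$ to lie in a common $(W,\bullet_p)$-orbit. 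The block decomposition then follows from the general principle that blocks in an abelian category are the equivalence classes of simples under $\mathrm{Ext}^1$-linkage.
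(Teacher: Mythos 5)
Your main route — central characters of $Z(\operatorname{Dist} G)$ via a Harish-Chandra projection, in analogy with category $\mathcal{O}$ — is exactly the strategy the paper sketches in its remark at the end of the Distribution Algebras subsection. However, there is a genuine gap in your discriminating-power claim. You assert that $\chi_\lambda = \chi_\mu$ if and only if $\mu \in W \bullet_p \lambda$, but this overstates what the central characters can see. The subalgebra of $Z(\operatorname{Dist} G)$ obtained by reducing $Z(U_{\mathbb{Z}})$ mod $p$ acts on $\nabla_\mu$ through a character factoring through $\mathfrak{X}/p\mathfrak{X}$, so the most one gets from this step is $\chi_\lambda = \chi_\mu$ iff $\lambda$ and $\mu$ agree in $\mathfrak{X}/\bigl((W_{\text{f}},\bullet) \ltimes p\mathfrak{X}\bigr)$. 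Since $\mathbb{Z} R \subsetneq \mathfrak{X}$ in general (the index being $|\mathfrak{X}/\mathbb{Z} R|$, the index of connection), this is strictly coarser than the decomposition by $W \bullet_p$-orbits, i.e.\ by $(W_{\text{f}},\bullet) \ltimes p\mathbb{Z} R$. To sharpen $p\mathfrak{X}$ to $p\mathbb{Z} R$, the paper invokes a separate ingredient: the action of the scheme-theoretic centre $Z(G)$, which when reduced identifies with the finite group scheme $p\mathfrak{X}/p\mathbb{Z} R$ and detects the residual refinement. Your proposal omits this step.

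A second, smaller issue: the central-character route only works under hypotheses on $p$ (good prime, $Z(G)$ reduced), which you do not flag. Your closing paragraph correctly identifies the strong linkage principle (Kempf vanishing plus the Jantzen sum formula, propagated to $\operatorname{Ext}^1$-linkage of simples) as a way to sidestep these difficulties — but this is not merely a convenient alternative; it is in fact the approach (due to Andersen) that is needed to handle small $p$, where the central-character argument breaks down. If you want a proof valid for all $p$, you should promote that second route from a remark to the main argument and discharge the technical points about composition factors of Weyl modules carefully; if you stick with the central-character route, you must add the $Z(G)$-refinement step and the hypotheses on $p$.
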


\begin{remark}
The appropriate analogue of Remark \ref{misnomer} applies here: our blocks $\text{Rep}_\lambda(G)$ need not be indecomposable. The abuse is not too severe for $p \ge h$, the Coxeter number, where $\text{Rep}_\lambda(G)$ is indecomposable unless $\langle \mu + \rho, \alpha^\vee \rangle$ is divisible by $p$ for every $\alpha \in R_+$. The `true' block decomposition is laid out in \cite{don}.
\end{remark}

If $p \ge h$, then $0$ is a \textit{$p$-regular} element in $A_{\text{fund}}$ (that is, it has trivial stabiliser under the $p$-dilated dot action of $W$). When $x$ is such that $x \bullet_p 0 \in \mathfrak{X}_+$, set
$$L_x = L_{x \bullet_p 0};$$
this is a simple module in the \textit{principal block} $\text{Rep}_0(G)$. We assign analogous meanings to $\nabla_x$ and $\Delta_x$. (Recall Notation \ref{weyl}.)

\begin{exercise} \label{resweights}
Suppose $p \ge h$. By explicit calculation, work out how many weights of the form $W \bullet_p 0$ are $p$-restricted for the root systems $A_1, A_2, B_2, G_2$. On the basis of these calculations, formulate a conjecture for the answer in general.\footnote{We will give the answer in a footnote on the final page of this lecture; the reader is warned to look only if they want to know!}
\end{exercise}

Now we are ready to state the Lusztig conjecture, at least in a simplified form.

\begin{conj}[Lusztig \cite{lus}]
Under certain assumptions on $p$ and $x$, the following equation holds in the Grothendieck group $[\text{Rep}_0(G)]$:
$$[L_x] = \sum_y (-1)^{\ell(x)+\ell(y)} P_{w_0 y, w_0 x}(1) [\Delta_y].$$
\end{conj}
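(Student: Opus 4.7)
The plan is to mirror the geometric proof of the Kazhdan--Lusztig conjecture in characteristic zero, using the Finkelberg--Mirkovi\'c conjecture (to be stated in Lecture IV) as a bridge between the principal block $\text{Rep}_0(G)$ and a suitable category of perverse sheaves on the affine Grassmannian $\Gr_{G^\vee}$ of the Langlands dual group. Recall that in characteristic zero, Beilinson--Bernstein localisation \eqref{equiv1} and the Riemann--Hilbert correspondence \eqref{equiv2} identify $L_x$ with $\text{IC}_x$ and $\Delta_x$ with $\Delta_x^{\text{geom}}$ on $G/B$, so the character formula \eqref{klc} drops out at the level of Grothendieck groups from the geometric identity \eqref{ic}. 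The envisaged modular analogue replaces $G/B$ by $\Gr_{G^\vee}$, replaces $\mathbb{C}$-coefficients by coefficients in the field $k$ of characteristic $p$, and replaces the localisation/Riemann--Hilbert chain by an equivalence of abelian categories
\begin{equation*}
\Phi \colon \text{Rep}_0(G) \xrightarrow{\sim} \text{Perv}(\Gr_{G^\vee},k)
\end{equation*}
(for an appropriate equivariance) sending $[L_x] \mapsto [\text{IC}_{w_0 x}]$ and $[\Delta_y] \mapsto [\Delta_{w_0 y}^{\text{geom}}]$. The twist by $w_0$ reflects the fact that the highest-weight order on the representation side is opposite to the closure order on Schubert cells in $\Gr_{G^\vee}$.

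Granting such a $\Phi$, the conjecture reduces to the Kazhdan--Lusztig-style formula for IC stalks on $\Gr_{G^\vee}$,
\begin{equation*}
[\text{IC}_{w_0 x}] = \sum_y (-1)^{\ell(w_0 x) - \ell(w_0 y)} \, P_{w_0 y, w_0 x}(1) \, [\Delta_{w_0 y}^{\text{geom}}],
\end{equation*}
whose transport under $\Phi^{-1}$ is the stated identity, the sign matching because $\ell(x) + \ell(y) \equiv \ell(w_0 x) - \ell(w_0 y) \pmod 2$. Extending from the principal block to arbitrary blocks would then proceed via Jantzen's translation principle.

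The hard part is twofold. First, one must construct $\Phi$; this is the content of the Finkelberg--Mirkovi\'c conjecture, which demands the geometric Satake equivalence with modular coefficients, Wakimoto-type filtrations, and delicate control of the Iwahori-equivariant derived category of $\Gr_{G^\vee}$. Second, and more seriously, the geometric formula for $[\text{IC}_{w_0 x}]$ in characteristic zero is a corollary of the decomposition theorem applied to Bott--Samelson resolutions, and the decomposition theorem routinely \emph{fails} with modular coefficients. Modular IC stalks are instead governed by the \emph{$p$-canonical basis} of the affine Hecke algebra, which in general strictly majorises the Kazhdan--Lusztig basis, with discrepancies recorded by $p$-torsion in intersection forms on fibres of Bott--Samelson maps. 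Hence the formula can only be expected under hypotheses on $p$ large enough to kill this torsion, and the torsion explosion phenomena surveyed in Lecture V will show that the requisite bounds are far larger than Lusztig originally expected --- indeed, for small $p$ the formula in the stated form must fail.
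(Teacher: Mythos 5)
Your proposal matches the paper's own conditional derivation, which appears as the Application at the end of Lecture IV: apply the Finkelberg--Mirkovi\'c equivalence to turn $[L_x]$ into $[\text{IC}_{x^{-1}}^k]$, read off the multiplicities $a_{y,x}$ from Euler characteristics of costalks, pass to integral forms $\text{IC}^{\mathbb{Z}}$, and observe that the desired KL numerology holds precisely when those integral stalks and costalks are free of $p$-torsion, which is exactly the ``decomposition-theorem-can-fail / $p$-canonical basis may exceed KL basis'' phenomenon you describe. Two small notational points are worth flagging: the paper's FM dictionary twists by \emph{inversion} ($L_x \mapsto \text{IC}_{x^{-1}}^k$, $\nabla_x \mapsto \nabla^{\mathrm{geom}}_{x^{-1}}$) rather than by $w_0$ as you wrote, though this is immaterial since $P_{y,x}=P_{y^{-1},x^{-1}}$ and hence $P_{y^{-1}w_0,\,x^{-1}w_0}=P_{w_0y,\,w_0x}$; and the equivalence is with $\text{Perv}_{(\text{Iw})}(\Gr,k)$ (Iwahori-constructible perverse sheaves on the affine Grassmannian, not the affine flag variety), a distinction worth keeping in mind since only the orbits indexed by ${}^{\mathrm{f}}W$ occur. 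As you correctly note, the output is a conditional statement --- that Lusztig's formula holds if and only if the $\text{IC}_{x^{-1}}^{\mathbb{Z}} \otimes^{L}_{\mathbb{Z}} k$ stay simple for all relevant $x$ --- and the torsion-explosion results of Lecture V show this condition fails for $p$ in the range Lusztig and Jantzen originally hoped.
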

The key point to note here is the independence of the formula from the prime $p$, subject to the assumptions mentioned; the parallel to the Kazhdan--Lusztig conjecture \ref{klconj} should also be apparent. In the next lecture we shall be explicit about these assumptions, and say more about the current status of this conjecture.

\subsection{Distribution algebras}
To conclude, we introduce distribution algebras and relate them to the representation theory of $G$. We will rely on this technology in later lectures, but it would be remiss to omit it entirely from our story. Moreover, consideration of distribution algebras allows one to see why the Linkage Principle holds (at least when $p$ is not too small).

\begin{defi}
Extending our previously discussed notion of left-invariant derivations on $G$, we define the $k$-algebra of $\textit{distributions}$ on $G$ to be
    $$\text{Dist $G$} = \text{left-invariant differential operators on $G$.}$$
\end{defi}
In terms of terminology already available to us, this is the most convenient definition of $\text{Dist}(G)$. For a definition and discussion of $\text{Dist}(G)$ as a subalgebra of $k[G]^*$, see \cite[\textsection I.7]{jan}. 

\begin{ex} \leavevmode
\begin{enumerate}
    \item Writing $k[\mathbb{G}_a] = k[z]$, $\text{Dist} \, \mathbb{G}_a$ has a countably infinite $k$-basis given by the divided powers $$\partial_z^{[n]} = \frac{\partial_z^n}{n!}, \quad n \ge 0,$$ where $\partial_z$ denotes differentiation by $z$.
    \item Writing $k[\mathbb{G}_m] = k[z,z^{-1}]$, $\text{Dist} \, \mathbb{G}_m$ has a $k$-basis in the elements $$\binom{\partial_z}{n} = \frac{\partial_z(\partial_z - 1) \cdots (\partial_z - n + 1)}{n!}, \quad n \ge 0.\footnote{In these examples, the numerators of the fractions are operators which send basis elements $z^m$ to multiples of $n!$, so ``division'' by $n!$ (which might be zero in $k$) is just a shorthand.}$$
\end{enumerate}
\end{ex}

The inclusion $\mathfrak{g} = \text{Lie}(G) \hookrightarrow \text{Dist}(G)$ induces an algebra homomorphism
$$\gamma: U(\mathfrak{g}) \to \text{Dist}(G).$$
For groups defined over a ground field of characteristic zero, $\gamma$ is an isomorphism; for $k$ of characteristic $p > 0$, all one can say is that $\gamma$ factors over an embedding
$$u(\mathfrak{g}) \hookrightarrow \text{Dist}(G).$$
In fact, $\text{Dist}(G)$ turns out to be the best replacement for $U(\mathfrak{g})$ in characteristic $p$. Any $G$-module $M$ gives rise to a locally finite $\text{Dist}(G)$-module, and the induced functor is fully faithful:
$$\text{Hom}_{G}(M,M') = \text{Hom}_{\text{Dist}(G)}(M,M').$$
Conversely, if $G$ is semi-simple and simply connected, then a theorem of Sullivan \cite{sull} establishes that any locally finite $\text{Dist}(G)$-module arises from a $G$-module.

We may assume that $G = G_k$ arises via base change from an algebraic group $G_\mathbb{Z}$ defined over $\mathbb{Z}$. Base extension to $\mathbb{C}$ yields $G_{\mathbb{C}}$, with Lie algebra $\mathfrak{g}_{\mathbb{C}}$. This Lie algebra is spanned by Chevalley elements $f_\alpha$, $e_\alpha$, and $h_\alpha = [e_\alpha,f_\alpha]$, $\alpha \in R_+$; see for instance \cite{hum1}. Consider the following $\mathbb{Z}$-subalgebra of $U(\mathfrak{g}_\mathbb{C})$:
$$U_{\mathbb{Z}} = \mathbb{Z} \left [ \frac{f_\alpha^\ell}{\ell!}, \binom{h_\alpha}{\ell}, \frac{e_\alpha^\ell}{\ell!} \right ].$$
We then have $\text{Dist $G_\mathbb{Z}$} = U_{\mathbb{Z}}$ and $\text{Dist $G_k$} = U_{\mathbb{Z}} \otimes_{\mathbb{Z}} k$. The algebra $U_{\mathbb{Z}}$ is known as a \textit{Kostant $\mathbb{Z}$-form} of $U(\mathfrak{g}_{\mathbb{C}})$.

\begin{remark}
If $Z(G)$ is reduced and $p$ is \textit{good} (in the sense of \cite[\textsection I.4.21]{jan}), the Linkage Principle \ref{lp} can be established by considering $Z(\text{Dist}(G))$. Since $\text{Dist}(G)$ replaces $U(\mathfrak{g})$ in characteristic $p$, this proof strategy is analogous to the usual approach to the Linkage Principle for category $\mathcal{O}$ of a complex semi-simple Lie algebra $\mathfrak{g}$. In that setting, consideration of central characters yields that $L_\lambda$ and $L_\mu$ are in the same block\footnote{Caution: do not forget about Remark \ref{misnomer}! A "block" for us is a subcategory $\mathcal{O}_\lambda$ of representations with central character $\chi_\lambda$, which is not necessarily indecomposable.} if and only if $\lambda = \mu$ in 
$$\mathfrak{h}^*/(W_{\text{f}},\bullet) = (\text{Spec $\mathcal{Z}$})(\CM),$$
where $\mathcal{Z} = Z(U(\mathfrak{g}))$. In characteristic $p$, the reduction modulo $p$ of $Z(U_{\mathbb{Z}})$ defines a subalgebra in $Z(\text{Dist $G$})$. Consideration of central characters for this subalgebra gives the analogous condition that $\lambda = \mu$ in 
$$\mathfrak{h}^*_{\mathbb{F}_p}/(W_{\text{f}},\bullet) = \mathfrak{h}^*/((W_{\text{f}},\bullet) \ltimes p \mathfrak{X}).$$ 
This is almost the Linkage Principle; to conclude, we need to pass from $(W_{\text{f}},\bullet) \ltimes p \mathfrak{X}$ to $(W_{\text{f}},\bullet) \ltimes p \mathbb{Z} R$. This is achieved by consideration of the centre $Z(G)$, which---if reduced---agrees as an algebraic group with the finite group $p\mathfrak{X}/p \mathbb{Z}R$. The proof of the Linkage Principle for small $p$, first provided by Andersen \cite{and}, has a rather different complexion.\footnote{As promised, the answer to Exercise \ref{resweights}: $|W_{\text{f}}|/\kappa$, where $\kappa = |\mathfrak{X}/\mathbb{Z}R|$ is the \textit{index of connection}.}
\end{remark}

\makeatletter
\let\savedchap\@makeschapterhead
\def\@makeschapterhead{\vspace*{-5cm}\savedchap}
\chapter*{Lecture IV}
\let\@makeschapterhead\savedchap

\section{Linkage and Blocks}
\subsection{Recollections} 
As previously, assume $G$ is a semi-simple and simply connected algebraic $k$-group. The affine Weyl group (of the dual root system) is $W = W_{\text{f}} \ltimes \mathbb{Z} R$. We also have the \textit{$p$-dilated affine Weyl group}, $W_p = W_{\text{f}} \ltimes p \mathbb{Z}R$, i.e.
\begin{align*}
    W_p = \langle \text{reflections in hyperplanes $\langle \lambda + \rho, \alpha^\vee \rangle = mp$, for $\alpha \in R$, $m \in \mathbb{Z}$} \rangle.
\end{align*}
Evidently the $p$-dilated dot action of $W$ corresponds to the regular dot action of $W_p$, so the choice to work with $\bullet_p$ or $W_p$ is mostly a matter of taste. We saw that a fundamental domain for the $(W_p,\bullet)$-action on $\mathfrak{X}_{\mathbb{R}}$ is the closure of
$$A_{\text{fund}} = \{ \text{$\lambda \in \mathfrak{X}_{\mathbb{R}}: 0 < \langle \lambda + \rho, \alpha^\vee \rangle < p$ for all $\alpha \in R_+$}  \}$$
in $\mathfrak{X}_{\mathbb{R}}$. Now $(W_p,S)$ is a Coxeter system, where $S = \{ \text{reflections in the walls of $A_{\text{fund}}$} \}$. We will assume $p \ge h$, the Coxeter number, so that $0 \in A_{\text{fund}}$ is a \textit{regular element} in the sense that $\text{Stab}_{(W_p,\bullet)}(0) = \{ 1 \}$. Recall that the \textit{facet} containing $\lambda \in \mathfrak{X}_{\mathbb{R}}$ is the subset of all $\mu \in \mathfrak{X}_{\mathbb{R}}$ sharing the same stabiliser as $\lambda$ under $(W,\bullet_p)$.

\begin{figure}[htp]
\centering
\setlength\weightLength{0.5cm}
\begin{tikzpicture}[auto,rotate=60]
\begin{rootSystem}{A}
\wt{-1}{-1}
\wt{0}{0}
\weightLattice{6}
\fill[gray!50,opacity=.5] \weight{-1}{-1} -- \weight{4}{-1} -- \weight{-1}{4} -- cycle;
\node [below] at (square cs:x=-1.5,y=-0.867) {\small\(-\rho\)};
\node [below=0.1cm] at (square cs:x=0,y=0) {\small\(0\)};
\draw[thick] \weight{0}{-6} -- \weight{0}{6};
\draw[thick] \weight{-6}{0} -- \weight{6}{0};
\draw[thick] \weight{6}{-6} -- \weight{-6}{6};
\end{rootSystem}
\end{tikzpicture}
\caption{Here is a picture for $G = \text{SL}_3$ and $p = 5$, with root hyperplanes in bold and $A_{\text{fund}}$ shaded. It is a good exercise to determine where the $p$-restricted weights $\mathfrak{X}_{<p}$ lie in this picture.}
    \label{fig:sl23}
\end{figure}
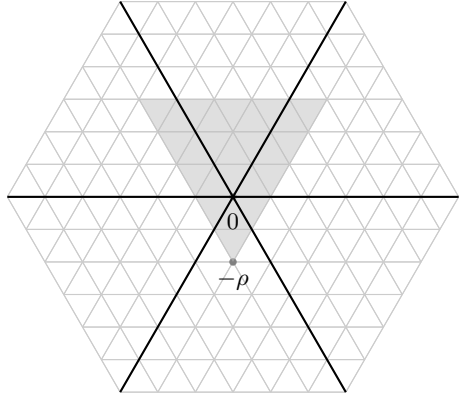
\subsection{Translation functors}
In Proposition \ref{lp}, we stated the Linkage Principle for $G$ by the decomposition
$$\text{Rep $G$} = \bigoplus_{\lambda \in \mathfrak{X}/(W_p, \bullet)} \text{Rep}_{\lambda}(G);$$
as usual, we call $\text{Rep}_0(G)$ the principal block. This decomposition implies other versions of the statement, such as to say that
$$\Ext^1_G(L_\lambda,L_\mu) = 0$$
if $\lambda, \mu \in \mathfrak{X}_+$ lie in different $(W_p,\bullet)$-orbits; see \cite[Corollary II.6.17]{jan}.

Most questions about the representation theory of $G$ can be reduced to questions about $\text{Rep}_0(G)$ using \textit{translation functors}; let us describe these briefly (for more detail, see \cite[\textsection II.7]{jan}). Given $\lambda \in \mathfrak{X}$, let $\text{pr}_\lambda$ denote the projection functor from $\text{Rep $G$}$ onto $\text{Rep}_{\lambda}(G)$; and given $\lambda, \mu \in A_{\text{fund}}$, let $\nu$ be the unique dominant weight in the $W$-orbit of $\mu - \lambda$. We then define the translation functor $T_{\lambda}^\mu: \text{Rep}(G) \to \text{Rep}(G)$ by the formula
$$T_{\lambda}^\mu(V) = \text{pr}_\mu(L_\nu \otimes \text{pr}_\lambda V).$$
This functor is exact and $(T_{\lambda}^\mu,T_{\mu}^\lambda)$ is an adjoint pair. By restriction, $T_{\lambda}^\mu$ induces a functor $\text{Rep}_\lambda(G) \to \text{Rep}_\mu(G)$. The \textit{translation principle} states this is an equivalence whenever $\lambda, \mu$ belong to the same facet. Roughly speaking, blocks associated to weights in the closure of a facet are ``simpler"; this is the essence of why considering the principal block is sufficient for many purposes.

\section{Elaborations on the Lusztig Conjecture}
\subsection{Explicit statement}
On our second pass, we will be precise in stating the Lusztig Conjecture. 

\begin{notation}
Write $L_x = L_{x \bullet 0}$, where $x \in {^\text{f}}W_p$ is a minimal coset representative, and similarly for $\nabla_x$ and $\Delta_x$.
\end{notation}

\begin{conj}[Lusztig]
Suppose $p \ge h$ and $x \bullet_p 0 \in \mathfrak{X}_+$, where $x \in W$ satisfies \textit{Jantzen's condition}: $\langle x \bullet_p 0 + \rho, \alpha^\vee \rangle \le p(p-h+2)$ for all $\alpha \in R_+$. Then
\begin{equation} \label{lcf} \tag{LCF}
[L_x] = \sum (-1)^{\ell(x) + \ell(y)} P_{w_0 y, w_0 x}(1)[\nabla_y],
\end{equation}
the sum running over $y \le x$ with $y \bullet_p 0 \in \mathfrak{X}_+$.
\end{conj}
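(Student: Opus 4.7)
The plan is to geometrise the question by imitating the Beilinson--Bernstein proof of the Kazhdan--Lusztig conjecture, with the flag variety of $G$ replaced by the affine Grassmannian $\Gr_{G^\vee}$ of the Langlands dual group $G^\vee$. As a first reduction, I would invoke the translation functors just discussed: under $p \ge h$ and Jantzen's condition the weight $x \bullet_p 0$ lies in the regular range, and the translation principle yields equivalences between blocks of weights in the same facet, so it suffices to establish \eqref{lcf} in the principal block $\Rep_0(G)$ and then transport.

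The core geometric input is the Finkelberg--Mirkovi\'c conjecture (foreshadowed in the introduction), positing an equivalence of abelian categories
\[
\Rep_0(G) \;\simeq\; \text{Perv}_{I^\vee_0}(\Gr_{G^\vee}, k),
\]
between the principal block and Iwahori-equivariant perverse sheaves on the affine Grassmannian of $G^\vee$ with $k$-coefficients, matching the simple $L_x$ with the intersection cohomology complex $\text{IC}_x$ and the costandard $\nabla_y$ with its geometric counterpart $\nabla_y^{\text{geom}} = j_{y*}(\uk[d_y])$.

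Granting this equivalence, \eqref{lcf} is identified with the Kazhdan--Lusztig-style formula
\[
[\text{IC}_x] = \sum_{y \le x} (-1)^{\ell(x)+\ell(y)} P_{w_0 y,\, w_0 x}(1) [\nabla_y^{\text{geom}}]
\]
in the Grothendieck group of $\text{Perv}_{I^\vee_0}(\Gr_{G^\vee}, k)$. In characteristic zero this is precisely the stalk computation of \cite{kl80}, proved via the decomposition theorem and the Weil conjectures; over $k$ of characteristic $p$ the same argument transports provided the stalks of $\text{IC}_x$ are free of $p$-torsion, so that base change from integral coefficients preserves the polynomials involved.

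The main obstacle has two layers, both of the flavour developed in Lecture V. First, the Finkelberg--Mirkovi\'c equivalence is itself a deep statement in characteristic $p$, and the route historically taken for large $p$ instead factors through Andersen--Jantzen--Soergel's comparison of $\Rep_0(G)$ with modules over a quantum group at a $p$th root of unity, combined with Kazhdan--Lusztig's tensor equivalence relating those to representations of an affine Lie algebra at negative level. Second, and more fundamentally, in positive characteristic the decomposition theorem can fail and the stalks of $\text{IC}_x$ on affine Schubert varieties acquire $p$-torsion, so the true controlling polynomials are the $p$-canonical ${}^p\! P_{y,x}$ rather than the ordinary $P_{y,x}$; \eqref{lcf} then holds precisely when these coincide in the relevant range, and the torsion-explosion phenomenon of Lecture V shows this coincidence can fail for many moderate $p$, even as it does hold for $p$ sufficiently large.
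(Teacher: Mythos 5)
The statement you set out to prove is a conjecture, not a theorem; and, as the paper explains in Lecture V, it is now known to be \emph{false} for many $p$ satisfying Jantzen's condition, on account of torsion explosion. So the paper contains no proof of it, and no blind proposal could supply one in the stated generality. What the paper does offer (in the Application concluding Lecture IV) is a conditional derivation: granting the Finkelberg--Mirkovi\'c equivalence, \eqref{lcf} for $L_x$ is equivalent to the absence of $p$-torsion in the relevant (co)stalks of $\text{IC}_{x^{-1}}^{\mathbb{Z}}$ on the affine Grassmannian, and in that event the multiplicity is read off from Kazhdan--Lusztig's stalk computation \cite{kl80}.

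Your proposal tracks this conditional route faithfully: pass through Finkelberg--Mirkovi\'c, identify $L_x$ with an IC sheaf and $\nabla_y$ with a costandard sheaf, compute multiplicities by Euler characteristics of (co)stalks, and note that in positive characteristic the true answer is controlled by $p$-canonical rather than ordinary Kazhdan--Lusztig polynomials. You are also right that the historical proof for large $p$ actually went a different way, via quantum groups at a root of unity and Andersen--Jantzen--Soergel. Two small points worth correcting: (i) \eqref{lcf} as stated is already a formula in $\text{Rep}_0(G)$, so the translation-principle reduction you invoke at the start is actually used in the \emph{opposite} direction, to propagate information from the principal block to other blocks, not to reach it; and (ii) the paper's FM sends $L_x \mapsto \text{IC}_{x^{-1}}$ and $\nabla_x \mapsto \nabla_{x^{-1}}^{\text{geom}}$, an inversion your notation glosses over, though it is harmless since $P_{y^{-1}w_0,\, x^{-1}w_0} = P_{w_0 y,\, w_0 x}$. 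Your closing paragraph correctly identifies the torsion obstruction as genuine and occurring in the Jantzen range; that is precisely why the conjecture as written fails, and why what you have really sketched is the \emph{equivalence} between \eqref{lcf} and a torsion-freeness condition, not a proof of \eqref{lcf} itself.
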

The key feature to observe is the independence from $p$, or in other words that the formula is uniform over all $p \ge h$.

\subsection{History}
The conjecture was made in 1980 and proved in the mid 1990's for $p \ge N$, where $N$ is a non-explicit bound depending only the root system; this was work of Lusztig (\cite{lus94},  \cite{lus95}), Kashiwara--Tanisaki (\cite{kt95}, \cite{kt96}), Kazhdan--Lusztig (\cite{kl93}, \cite{kl94a}, \cite{kl94b}), and Andersen--Jantzen--Soergel (\cite{ajs}). In the mid-2000's, a new proof was provided by Arkhipov--Bezrukavnikov--Ginzburg \cite{abg04}. Early in the next decade, Fiebig gave another new proof \cite{fie11} and an explicit but enormous lower bound $N$ \cite{fie12}; for instance, $N = 10^{100}$ for $\text{GL}_{10}$. Most recently, the second author \cite{wil16b}, \cite{wil16c} (with help from Elias, He, Kontorovich, and McNamara variously in \cite{ew13}, \cite{hw15}, and the appendix to \cite{wil16c}) proved that the conjecture is not true for $\text{GL}_n$ for many $p$ on the order of exponential functions of $n$.

\begin{figure}[htp] 
    \centering
    $$
\begin{tabular}{ | c | c | c | c | c | c | c | c | } 
\hline
 & $\nabla_0$ & $\nabla_8$ & $\nabla_{10}$ & $\nabla_{18}$ & $\nabla_{20}$ & $\nabla_{28}$ & $\nabla_{30}$ \\ 
\hline
$L_0$ & \cellcolor{gray!25}1 & \cellcolor{gray!25} & \cellcolor{gray!25} & \cellcolor{gray!25} & \cellcolor{gray!25} & &  \\ 
\hline
$L_8$ & \cellcolor{gray!25}$-1$ & \cellcolor{gray!25}$1$ & \cellcolor{gray!25} & \cellcolor{gray!25} & \cellcolor{gray!25} & & \\ 
\hline
$L_{10}$ & \cellcolor{gray!25}1 & \cellcolor{gray!25}$-1$ & \cellcolor{gray!25}$1$ & \cellcolor{gray!25} & \cellcolor{gray!25} & & \\ 
\hline
$L_{18}$ & \cellcolor{gray!25}$-1$ & \cellcolor{gray!25}$1$ & \cellcolor{gray!25}$-1$ & \cellcolor{gray!25}$1$ & \cellcolor{gray!25} & &\\ 
\hline
$L_{20}$ & \cellcolor{gray!25}1 & \cellcolor{gray!25}$-1$ & \cellcolor{gray!25}$1$ & \cellcolor{gray!25}$-1$ & \cellcolor{gray!25}$1$ & & \\ 
\hline
$L_{28}$ & & & & & $-1$ & $1$ &\\ 
\hline
$L_{30}$ & & & & $-1$ & $1$ & $-1$ & $1$ \\ 
\hline
\end{tabular}
$$
    \caption{Plot of multiplicities in the principal block of $\text{SL}_2$ for $p = 5$. Shaded is the region in which Lusztig's conjecture is valid.}
    \label{fig:sl2l}
\end{figure}
\newpage

\begin{figure}[htp] 
    \centering
    \begin{minipage}{0.45\textwidth}
        \centering
        \begin{tikzpicture}[auto,rotate=60]
        \setlength\weightLength{1cm}
\begin{rootSystem}{A}
\weightLattice{2}
\wt{0}{0}
\fill[gray!50,opacity=.5] \weight{-1}{0} -- \weight{-1}{-1} -- \weight{0}{-1} -- cycle;
\node at (square cs:x=-1,y=-0.5) {\small\(-1\)}; 
\node at (square cs:x=-0.5,y=-0.25) {\small\(1\)};
\end{rootSystem}
\node[above,font=\large\bfseries] at (current bounding box.north) {$L_{(p-2) \rho}$};
\end{tikzpicture}
    \end{minipage}\hfill
    \begin{minipage}{0.45\textwidth}
        \centering
        \begin{tikzpicture}[auto,rotate=60]
        \setlength\weightLength{1cm}
\begin{rootSystem}{A}
\weightLattice{2}
\wt{0}{0}
\fill[gray!50,opacity=.5]
\weight{-1}{0} -- \weight{-1}{-1} -- \weight{0}{-1} -- cycle;
\node at (square cs:x=-1,y=-0.6) {\small\(2\)}; 
\node at (square cs:x=-0.5,y=-0.25) {\small\(-1\)};
\node at (square cs:x=0.5,y=0.25) {\small\(1\)};
\node at (square cs:x=0,y=0.6) {\small\(-1\)};
\node at (square cs:x=0,y=-0.5) {\small\(1\)};
\node at (square cs:x=0.5,y=-0.25) {\small\(-1\)};
\node at (square cs:x=-0.5,y=0.25) {\small\(1\)};
\end{rootSystem}
\node[above,font=\large\bfseries] at (current bounding box.north) {$L_{p \rho}$};
\end{tikzpicture}
    \end{minipage}
    \caption{Similar plots for $\text{SL}_3$ and the highest weights $(p-2) \rho$ and $p \rho$, respectively ($p \ge 3$). The gray regions are $A_{\text{fund}}$. The second example is the first one featuring a $2$; its multiplicities can be checked using the Steinberg tensor product theorem.}
    \label{fig:mults}
\end{figure}
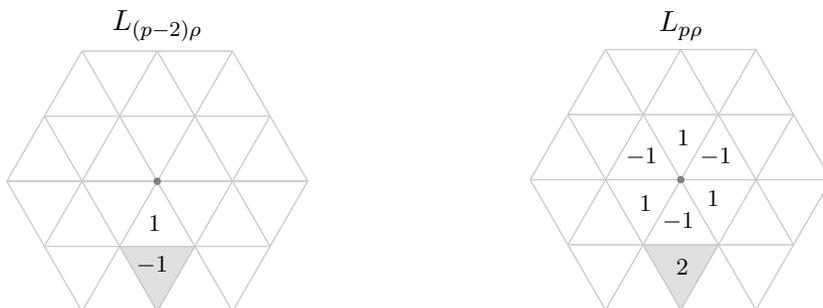

\begin{exercise}
Verify that the multiplicities given in Figure \ref{fig:mults} are correct.
\end{exercise}

\begin{exercise}
Let $W = \widetilde{A}_1$ denote the infinite dihedral group with Coxeter generators $s_0, s_1$, and let
    $$w_m = s_0 s_1 \cdots, \quad w_m' = s_1 s_0 \cdots$$
    be the elements given by the unique reduced expressions starting with $s_0$ and $s_1$, respectively, of length $m$. Note that any non-identity $x \in W$ is equal to a unique $w_m$ or $w_m'$.
    \begin{enumerate}
        \item Compute the Bruhat order on $W$.
        \item Prove inductively that one has
        $$b_{w_m} = h_{w_m} + \sum_{0 < n < m} v^{m-n} h_{w_n} + \sum_{0 < n' < m} v^{m-n'} h_{w'_{n'}} + v^m h_{\text{id}}.$$
        \item Deduce that Lusztig's character formula holds for $x \bullet_p 0 \in \mathfrak{X}_+$ if and only if $x \bullet_p 0$ has two $p$-adic digits. (Hint: This part will require use of Exercise \ref{simpchar}.)
    \end{enumerate}
\end{exercise}

\section{The Finkelberg--Mirkovi\'c  conjecture}
\subsection{Objects in affine geometry}
Let $G = G_k$ be a connected semi-simple group, obtained by base change from a group $G_{\mathbb{Z}}$ over $\mathbb{Z}$. Recall that the \textit{adjoint representation} of $G$ on $\mathfrak{g} = \text{Lie}(G)$ is given by differentiating inner automorphisms at the identity $e \in G$: $$\text{Ad}: G \to \mathfrak{gl}(\mathfrak{g}), \quad g \mapsto d(\text{Int $g$})_e,$$ 
where $\text{Int $g$}(h) = ghg^{-1}$ for $h \in G$. The \textit{adjoint group} of $G$ is then the image $$G_{\text{ad}} = \text{Ad $G$} \subseteq \text{Aut}(\mathfrak{g}).$$
\begin{exercise} \label{adj}
Show that the character and root lattices of $G_{\text{ad}}$ coincide. Hence deduce that for general semi-simple $G$, one always has an equivalence:
$$\text{Rep}_0(G) \cong \text{Rep}_0(G_{\text{ad}}).$$
\end{exercise}
For the sake of simplicity, and in light of Exercise \ref{adj}, we will be content to operate with the following assumption from now on.

\begin{assumption} \label{adjtyp}
$G$ is of \textit{adjoint type}, meaning that $\text{Ad}$ is faithful: $G \cong G_{\text{ad}}$.
\end{assumption}
The point of making this assumption is that Frobenius twist then yields a functor
$$(-)^{\text{Fr}}: \text{Rep $G$} \to \text{Rep}_0(G).$$
Indeed, the Frobenius twist of the simple module $L_\lambda$ is $L_{p\lambda}$, and $p\lambda \in p \mathfrak{X} = p \mathbb{Z} R$ by Assumption \ref{adjtyp}.

Let us denote by $G^\vee$ the dual group to $G$ over the \emph{complex} numbers. Let $F = \mathbb{C}((t))$ with ring of integers $\mathcal{O} = \mathbb{C}[[t]]$. Then
$$G^\vee(F) \supseteq K = G^\vee(\mathcal{O});$$
this $K$ is analogous to a maximal compact subgroup of $G^\vee(F)$. The assignment $t = 0$ defines an evaluation homomorphism
$$\text{ev}: K \to G^\vee(\mathbb{C}) = G^\vee;$$
consider then the preimage $\text{Iw} = \text{ev}^{-1}(B^\vee) \subseteq K$ of a Borel subgroup $B^\vee \subseteq G^\vee$. Now we can introduce some geometric objects: the \textit{affine flag variety} is
$$\text{Fl} = G^\vee(F)/\text{Iw} = \bigsqcup_{x \in W} \text{Fl}_x, \quad \text{where $\text{Fl}_x = \text{Iw} \cdot x \text{Iw}/\text{Iw}$},$$
which is a $K/\text{Iw} = G^\vee / B^\vee$-bundle over the \textit{affine Grassmannian}
$$\text{Gr} = G^\vee(F)/K = \bigsqcup_{x \in W^{\text{f}}} \text{Gr}_x, \quad \text{where $\text{Gr}_x = \text{Iw} \cdot x K/K$};$$
here we view $W^{\text{f}} \subseteq W$ as a set of minimal coset representatives for $W/W_{\text{f}}$. In these two decompositions, each Iw-orbit is isomorphic to an affine space of dimension $\ell(x)$; we refer to these orbits as \textit{Schubert cells}. 

Any $\lambda \in \mathfrak{X}$ corresponds to a cocharacter $\mathbb{G}_m \to T^\vee$, where $T^\vee \subseteq B^\vee$ is a maximal torus. We can then obtain a morphism $$F^\times = \mathbb{C}((t))^\times \to T^\vee(F),$$ sending $t$ to an element $t^\lambda \in T^\vee(F) \subseteq G^\vee(F)$. The $K$-orbits of the cosets $t^\lambda K \in \text{Gr}$ under the left action of $K$ are unions of Iw-orbits, and thus afford another (strictly coarser) stratification of Gr by \textit{spherical Schubert cells}:
$$\text{Gr} = \bigsqcup_{\lambda \in \mathfrak{X}} \text{Gr}_\lambda, \quad \text{where $\text{Gr}_\lambda =  K \cdot t^\lambda K$.}$$
The affine Grassmannian $\text{Gr}$ and affine flag variety $\text{Fl}$ are \textit{ind-varieties} (that is, colimits of varieties under closed embeddings). An in-depth treatment of their geometric properties would require at least another lecture; we recommend \cite{kumar}, \cite{br}, and \cite{zhu} for further information on this fascinating topic. 

\begin{Eg}
For $G = \text{SL}_2$, we have $G^\vee = \text{SL}_2$. The (complex points of the) affine Grassmannian can be written as the disjoint union
$$\mathbb{C}^0 \sqcup \mathbb{C}^1 \sqcup \mathbb{C}^2 \sqcup \mathbb{C}^3 \sqcup \mathbb{C}^4 \sqcup \cdots = \mathbb{C}^0 \sqcup (\mathbb{C}^1 \sqcup \mathbb{C}^2) \sqcup (\mathbb{C}^3 \sqcup \mathbb{C}^4) \sqcup \cdots$$
along complicated gluing maps. On the left-hand side, the indicated strata are the Schubert cells, which are in bijection with $W/W_{\text{f}} \cong \mathbb{Z}_{\ge 0}$; on the right-hand side, we have bracketed the spherical Schubert cells.
\end{Eg}

The following exercise is very beautiful and due to Lusztig \cite[\textsection 2]{lus81}.

\begin{exercise} \label{lexercise}
Let $V$ denote an $n$-dimensional $\mathbb{C}$-vector space and consider
    $$E = V^{\oplus n}$$
    equipped with the nilpotent operator
    $$t: E \to E, \quad (v_1, \cdots, v_n) \mapsto (0, v_1, \cdots, v_{n-1}).$$
    Denote by $Y$ the variety of $t$-stable $n$-dimensional subspaces of $E$.
    \begin{enumerate}
        \item Prove that $Y$ is a projective variety.
        \item Let $U \subseteq Y$ be the open subvariety of $t$-stable subspaces transverse to $V^{\oplus (n-1)} \oplus 0.$ Show that a point $X \in U$ is uniquely determined by maps $f_i: V \to V$, $1 \le i \le n -1$, such that
        $$X = \{ (f_{n-1}(v), f_{n-2}(v), \cdots, f_1(v), v): v \in V \}.$$
        \item Now use that $X$ is $t$-stable to deduce that $f_i = f_1^i$ and that $f_1^n = 0$. Conclude that 
        $$U \cong \mathfrak{N}(\text{End}(V)),$$
        the subvariety of nilpotent endomorphisms of $V$. 
        \item Prove $Y \cong \overline{\text{Gr}_{n \varpi_1}}$, a \textit{spherical Schubert variety} in the affine Grassmannian of $\text{GL}_n$. 
    \end{enumerate}
\end{exercise}

\subsection{Statement of the conjecture}
The following theorem is one of the most important geometric tools in the theory. Consider the constructible derived category $D^b_{(K)}(\text{Gr},k)$ (resp. $D^b_{(\text{Iw})}(\text{Gr},k)$), taking the stratification of Gr by $K$-orbits (resp. $\text{Iw}$-orbits), and its full subcategory of perverse sheaves $\text{Perv}_{(K)}(\text{Gr},k)$ (resp. $\text{Perv}_{(\text{Iw})}(\text{Gr},k)$).

\begin{thm}[Geometric Satake equivalence] \label{sat}
There is an equivalence of monoidal categories,
$$\text{Sat}: (\text{Rep}(G), \otimes) \to (\text{Perv}_{(K)}(\text{Gr},k),*),
$$
where $*$ is the \textit{convolution product} on perverse sheaves.
\end{thm}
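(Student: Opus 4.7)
The plan is to follow the Mirković--Vilonen approach to the geometric Satake equivalence, proceeding in four stages: defining the convolution product, showing it preserves perversity, upgrading it to a symmetric monoidal structure via fusion, and finally carrying out Tannakian reconstruction to identify the group. The whole argument takes place on the $G^\vee$-side of the picture and produces $\text{Rep}(G)$ on the other.

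First, I would construct $*$ using the convolution diagram
\[
\text{Gr} \times \text{Gr} \xleftarrow{p} G^\vee(F) \times \text{Gr} \xrightarrow{q} G^\vee(F) \times^K \text{Gr} \xrightarrow{m} \text{Gr},
\]
setting $\FC * \GC := m_!(\FC \,\widetilde{\boxtimes}\, \GC)$, where $\FC \,\widetilde{\boxtimes}\, \GC$ is the unique $K$-equivariant descent of $p^*(\FC \boxtimes \GC)$ along the $K$-torsor $q$. The key geometric point is that the restriction of $m$ to each twisted product $\overline{\text{Gr}_\lambda} \,\widetilde{\times}\, \overline{\text{Gr}_\mu}$ is proper and semi-small---a dimension count reducing to $\dim \text{Gr}_\lambda = \langle 2\rho, \lambda \rangle$ for dominant $\lambda$---so $m_!$ preserves perverse sheaves and $*$ equips $\text{Perv}_{(K)}(\text{Gr},k)$ with a monoidal structure.

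Next, I would take for the fibre functor $\HC := \bigoplus_n H^n(\text{Gr}, -) : \text{Perv}_{(K)}(\text{Gr},k) \to \Vect_k$ and verify that it is exact, faithful, and monoidal. Exactness, which is delicate over a field of positive characteristic, is obtained by identifying $\HC$ with the direct sum over $\nu$ of compactly-supported cohomologies along the semi-infinite orbits $S_\nu = U^\vee(F) \cdot t^\nu K$ (the MV weight functors); exactness of each summand follows from Braden's hyperbolic localisation together with the fundamental dimension estimate $\dim(\overline{\text{Gr}_\lambda} \cap S_\nu) \le \langle \rho, \lambda + \nu \rangle$. To upgrade convolution to a \emph{symmetric} monoidal structure, I would work with the Beilinson--Drinfeld Grassmannian $\text{Gr}_{X^2}$ over a smooth curve $X$, whose generic fibre is $\text{Gr} \times \text{Gr}$ and whose diagonal fibre is $\text{Gr}$: nearby cycles in this family produce the canonical commutativity constraint for $*$ and simultaneously match it with the usual tensor product through $\HC$.

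Tannakian reconstruction then yields an equivalence $\text{Perv}_{(K)}(\text{Gr},k) \simeq \text{Rep}(\widetilde{G})$ for some affine group scheme $\widetilde{G}$ over $k$. To identify $\widetilde{G}$ with $G$, I would combine three pieces of data: the $T$-grading on $\HC$ coming from the MV weight functors recovers the character lattice $\mathfrak{X}$ of $G$; the MV cycles realise each $H^*(\text{Gr}, \text{IC}_\lambda)$ as a free module whose graded dimensions match the Weyl character formula for $L_\lambda$; and the spherical Schubert classification puts the simple objects in bijection with $\mathfrak{X}_+$. Rigidity of pinned split reductive group schemes then forces $\widetilde{G} \cong G$. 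The main obstacle is the construction and control of the commutativity constraint through nearby cycles on $\text{Gr}_{X^2}$, including the verification that these commute with convolution and preserve perversity in families; this is the heart of the modern proof. A secondary but serious hurdle, specific to the modular setting of these lectures, is the exactness of $\HC$ in positive characteristic, which forces a careful analysis of MV cycles rather than an appeal to the decomposition theorem.
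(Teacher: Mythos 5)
The paper does not actually prove this theorem: it cites Mirkovi\'c--Vilonen \cite{mv07} and merely remarks that the proof is a non-constructive Tannakian reconstruction. Your proposal is a faithful, much more detailed outline of exactly that route, with the standard ingredients of the MV argument: convolution defined via the usual correspondence and preserving perversity by semi-smallness, exactness of the hypercohomology fibre functor via the MV weight functors and hyperbolic localisation (a genuine concern over $k$ of positive characteristic, as you note), the Beilinson--Drinfeld fusion construction of the commutativity constraint, and Tannakian reconstruction followed by root-datum identification. One small slip worth tightening, given that these lectures work over $k$ of characteristic $p$: the graded dimensions of $H^*(\text{Gr}, \text{IC}_\lambda)$ compute $\text{ch}\, L_\lambda$, which in positive characteristic is typically \emph{not} given by the Weyl character formula --- that formula computes $\text{ch}\, \nabla_\lambda = \text{ch}\, \Delta_\lambda$. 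In the MV argument the MV-cycle basis and freeness statements are established for the standard and costandard perverse sheaves, which correspond to $\Delta_\lambda$ and $\nabla_\lambda$ under the equivalence; it is those, not the IC sheaves, that one uses to read off the root datum of $\widetilde{G}$ and conclude $\widetilde{G} \cong G$.
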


This theorem was established by Mirkovi\'c--Vilonen \cite{mv07}. Their proof is non-constructive and relies on the Tannakian formalism: one shows that the category of perverse sheaves is Tannakian, and hence is equivalent to the representations of some group scheme. One then works hard to show this group scheme is $G$. In this way, an equivalence of categories is established without explicitly providing functors in either direction!

\begin{remarks} \leavevmode
\begin{enumerate}
    \item The geometric Satake equivalence can actually be used to \textit{construct} the dual group, without knowing its existence a priori.
    \item One often sees the theorem stated in terms of the $K$-equivariant category $\text{Perv}_{K}(\text{Gr},k)$, which is in fact equivalent to $\text{Perv}_{(K)}(\text{Gr},k)$. 
\end{enumerate}
\end{remarks}

\begin{notation}
From this point onward, we will sometimes refer to (co)standard and IC sheaves with coefficients in a general commutative ring $A$ (generalising Notation \ref{icnote}). If $A$ is not clear from context, we will use notation such as $\text{IC}_\lambda^A$ or $\text{IC}(\overline{Y_\lambda},A)$; commonly, $A$ will be $\ZM$, $\CM$, or $k$. 
\end{notation}

\begin{conj}[Finkelberg--Mirkovi\'c] \label{fm}
There is an equivalence of abelian categories $\text{FM}$ fitting into a commutative diagram:
\[
\begin{tikzcd}
\text{Rep}_0(G)  \arrow{r}{\cong}[swap]{\text{FM}} & \text{Perv}_{(\text{Iw})}(\text{Gr},k) \\
\text{Rep}(G) \arrow{u}{(-)^{\text{Fr}}} \arrow{r}{\cong}[swap]{\text{Sat}} & \text{Perv}_{(K)}(\text{Gr},k). \arrow{u}{\text{Forget}}
\end{tikzcd}
\]
Moreover, under the equivalence FM, 
$$L_x \mapsto \text{IC}_{x^{-1}}^k \quad \text{and} \quad \nabla_x \mapsto \nabla_{x^{-1}}^{\text{geom}}.$$
\end{conj}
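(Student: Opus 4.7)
The plan is to realize $\mathrm{FM}$ as the abelian incarnation of a derived equivalence
\[
\Phi : D^b \mathrm{Rep}_0(G) \xrightarrow{\sim} D^b \mathrm{Perv}_{(\mathrm{Iw})}(\mathrm{Gr},k),
\]
constructed so that indecomposable tilting modules on the algebraic side are matched with indecomposable parity sheaves on the geometric side. One natural construction of $\Phi$ follows the Arkhipov--Bezrukavnikov--Ginzburg recipe: combine $\mathrm{Sat}$ with Gaitsgory's central (nearby cycles) construction to obtain a monoidal functor $Z \circ \mathrm{Sat} : \mathrm{Rep}(G) \to \mathrm{Perv}_{(\mathrm{Iw})}(\mathrm{Fl},k)$, extract the principal block by convolving against appropriate Wakimoto sheaves, and finally push down along the smooth bundle $\mathrm{Fl} \to \mathrm{Gr}$ to land in the Iwahori-equivariant category on the affine Grassmannian.

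Next I would check that the square commutes on generating objects. Under $\mathrm{Sat}$, the simple $L_\mu$ maps to $\mathrm{IC}(\overline{\mathrm{Gr}_\mu},k)$; on the other side $L_\mu \mapsto L_{p\mu}$ via $(-)^{\mathrm{Fr}}$, where $L_{p\mu}$ lies in $\mathrm{Rep}_0(G)$ thanks to Assumption \ref{adjtyp}. The key geometric observation is that the spherical Schubert stratum $\mathrm{Gr}_{p\mu}$, refined by the Iwahori stratification, is parametrized by an element of ${}^{\mathrm{f}}W$ produced precisely by the $p$-dilated dot action---this is the bookkeeping that matches the two labelling schemes. Granted t-exactness of $\Phi$ for the natural and perverse t-structures, restriction to hearts yields $\mathrm{FM}$, and the fact that both categories are highest-weight categories in the sense of Cline--Parshall--Scott (indexed, respectively, by dominant weights in the principal orbit and by ${}^{\mathrm{f}}W$ under Bruhat order) then forces $\nabla_x \mapsto \nabla^{\mathrm{geom}}_{x^{-1}}$ via the Ext-vanishing characterization of costandards; the identification $L_x \mapsto \mathrm{IC}^k_{x^{-1}}$ follows automatically by taking socles.

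The hard part will be the t-exactness of $\Phi$. The image of the natural t-structure under $\Phi$ is an \emph{exotic} t-structure on $D^b \mathrm{Perv}_{(\mathrm{Iw})}(\mathrm{Gr},k)$, which in general disagrees with the perverse t-structure; matching the two is, in a precise sense, equivalent to Lusztig's character formula (LCF). Consequently the Finkelberg--Mirkovi\'c conjecture is expected to be exactly as subtle as Lusztig's---valid for large $p$ and failing for certain small $p$, as foreshadowed in the next lecture. The pragmatic route is to sidestep direct verification of t-exactness: identify indecomposable tiltings $T_x \in \mathrm{Rep}_0(G)$ with indecomposable parity sheaves $\mathcal{E}_{x^{-1}}$ on $\mathrm{Gr}$, compare their Ext-algebras on the two sides (which on both sides admit Soergel-theoretic descriptions via the diagrammatic Hecke category), and reconstruct the abelian equivalence by Ringel-duality---thereby replacing a stubborn analytic problem by a combinatorial matching that is both tractable and directly responsible for the dependence of the conjecture on $p$.
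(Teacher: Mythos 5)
The paper does not prove this statement: it is stated as a conjecture and explicitly assumed for the remainder of the lectures, with a note that a proof was recently announced by Bezrukavnikov--Riche. There is therefore no argument in the paper to compare your proposal against.

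On its own terms, your proposal contains a fundamental confusion that would prevent it from closing. You assert that the t-exactness of your candidate functor $\Phi$ is ``in a precise sense, equivalent to Lusztig's character formula,'' and that the Finkelberg--Mirkovi\'c conjecture is ``expected to be exactly as subtle as Lusztig's---valid for large $p$ and failing for certain small $p$.'' This inverts the logic of the paper's own Application following Conjecture~\ref{fm}. The FM equivalence is expected (and, per the announcement, proved) to hold uniformly for all $p$ in the stated range, \emph{independently} of whether LCF holds. Indeed, once you have any equivalence of highest-weight abelian categories, simples go to simples, so $L_x \mapsto \text{IC}_{x^{-1}}^k$ and $\nabla_x \mapsto \nabla_{x^{-1}}^{\text{geom}}$ are essentially forced; the content of FM is the existence of the equivalence compatible with the square. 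What FM then accomplishes is a \emph{translation} of LCF into a purely topological statement: LCF holds if and only if the integral IC sheaves $\text{IC}_{x^{-1}}^{\mathbb{Z}}$ are free of $p$-torsion in the relevant range, so that the $k$-coefficient IC sheaves have the same Euler characteristics of stalks/costalks as the $\mathbb{Q}$-coefficient ones. LCF can and does fail for particular $p$ because of torsion, without FM failing; if FM itself required LCF as an input (or were equivalent to it), it would be useless as a tool for deciding LCF, which is the entire point of the paper's discussion. The exotic-versus-perverse t-structure mismatch you invoke is a genuine phenomenon in Bezrukavnikov-style equivalences with coherent sheaves on the Springer resolution, but it is not what gates the validity of FM or LCF in this setting; you have conflated two distinct t-structure comparisons.

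Your high-level toolkit---Gaitsgory's central functor, Wakimoto sheaves, matching tiltings with parity sheaves, and Ringel/Koszul-type reconstructions---does point in the direction of the actual Bezrukavnikov--Riche machinery, and the bookkeeping remark about how the $p$-dilated dot action matches Iwahori strata to weights in the principal block is correct. But the plan as written cannot succeed until you disentangle FM from LCF: the goal is to prove FM once and for all (for $p > h$, say), and only then deduce that LCF reduces to a torsion-freeness condition.
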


\begin{assumption}
Through the remainder of these notes, we will assume Conjecture \ref{fm} holds; in fact, a proof was recently announced by Bezrukavnikov--Riche. The conjecture provides a useful guiding principle in geometric representation theory. All the consequences that we will draw from it below can be established by other means, but with proofs that are much more roundabout.
\end{assumption}

\begin{application}
As a first application, let us explain why the Finkelberg--Mirkovi\'c  conjecture helps us understand Lusztig's character formula \eqref{lcf}. Recall  that we want to find expressions of the form:
$$[L_x] = \sum a_{y,x} [\nabla_y].$$
If we apply the Finkelberg--Mirkovi\'c  equivalence, this becomes
$$[\text{IC}_{x^{-1}}^k] = \sum a_{y,x} [\nabla_{y^{-1}}^{\text{geom}}].$$
Taking Euler characteristics of costalks at $y^{-1} \text{Iw}/\text{Iw}$ yields
$$\chi((\text{IC}_{x^{-1}}^k)^{!}_{y^{-1}}) = (-1)^{\ell(y)} a_{y,x}.$$
Now, there exist ``integral forms'' $\text{IC}_{x^{-1}}^{\mathbb{Z}}$ such that the perverse shaves $\text{IC}_{x^{-1}}^{\mathbb{Z}} \otimes_{\mathbb{Z}}^L k$ are isomorphic to $\text{IC}_{x^{-1}}^k$
if (certain) stalks and costalks of the $\text{IC}_{x^{-1}}^{\mathbb{Z}}$ are free of $p$-torsion; suppose this holds and let $\text{IC}_{x^{-1}}^{\mathbb{Q}} = \text{IC}_{x^{-1}}^{\mathbb{Z}} \otimes_{\mathbb{Z}} \mathbb{Q}$. Then
\begin{align}
  (-1)^{\ell(y)} a_{y,x} = \chi((\text{IC}_{x^{-1}}^k)^{!}_{y^{-1}}) &= \chi((\text{IC}_{x^{-1}}^{\mathbb{Q}})^{!}_{y^{-1}}) \label{chi} \\
  &= (-1)^{\ell(x)} P_{y^{-1} w_0, x^{-1} w_0}(1), \label{chi0}
\end{align}
from which it follows that $a_{y,x} = (-1)^{\ell(x)+\ell(y)} P_{y^{-1} w_0, x^{-1} w_0}(1)$. Note it is the final equality on line \eqref{chi} which depends on the $p$-torsion assumption, while the equation on line \eqref{chi0} follows from a classical formula of Kazhdan--Lusztig \cite{kl80} for $P_{y,x}$ in terms of IC sheaf cohomology.

In conclusion, then, we can see that if $\text{IC}_{x^{-1}}^{\mathbb{Z}} \otimes_{\mathbb{Z}}^L k$ stays simple for all $x \in {^\text{f}}W_p$ satisfying Jantzen's condition, then the Lusztig conjecture holds. An induction shows that this implication is in fact an ``if and only if".
\end{application}

\makeatletter
\let\savedchap\@makeschapterhead
\def\@makeschapterhead{\vspace*{-1cm}\savedchap}
\chapter*{Lecture V}
\let\@makeschapterhead\savedchap

\section{Torsion explosion}
Assume $G$ is a Chevalley group scheme over $\mathbb{Z}$, with $k = \overline{k}$ of characteristic $p$ fixed as before. It is a 2017 result of Achar--Riche \cite{ar}, expanding on earlier work of Fiebig \cite{fie11}, that the Lusztig conjecture for $G_k$ is equivalent to the absence of $p$-torsion in the stalks and costalks of $\text{IC}(\overline{\text{Gr}_x},\mathbb{Z})$ for $x \in {^\text{f}}W_p$ satisfying Jantzen's condition. This provided a clear topological approach to deciding the validity of Lusztig's character formula; we will discuss this in some detail momentarily.

Let us first repaint the historical picture. In the mid-1990s, the character formula was proved for large $p > N$; this was work of many authors, continued into the late 2000s by Fiebig's discovery of an effective (enormous) bound for $N$ in terms of just the root system of $G$ \cite{fie11}. It remained to determine the soundness of stronger estimates for the best possible $N$ (e.g. linear or polynomial in $h$).

An important consequence of the topological formulation is the absence of $p$-torsion ($p > h$) in IC sheaves over spherical Schubert varieties lying inside $G^\vee/B^\vee$ (the \textit{finite flag variety}). This was first observed by Soergel in an influential paper \cite{soe00}; it follows by considering the ``Steinberg embedding'' associated to any dominant regular $\lambda \in \mathfrak{X}_+$,
$$G^\vee/B^\vee \hookrightarrow \text{Gr}, \quad g \mapsto g \cdot t^\lambda,$$
which is stratum-preserving and induces an equivalence of categories,
$$\text{Perv}_{(B^\vee)}(G^\vee/B^\vee) \cong \text{Perv}_{(\text{Iw})} \left ( U \right ),$$
where $U = \bigsqcup_{x \in W_{\text{f}}} \text{Iw} \cdot t^{x \lambda}$ (a locally closed subset of $\text{Gr}$). Using this property of the IC sheaves, the second author (in 2013, with help from several colleagues) was able to construct counter-examples to the expected bounds in Lusztig's conjecture and the James conjecture for irreducible mod $p$ representations of symmetric groups. In particular, torsion was shown to grow at least exponentially, as opposed to linearly (as implied by the Lusztig conjecture) or quadratically (as implied by the James conjecture); in other words, a phenomenon of ``torsion explosion''.

\hspace{0.5cm}
\begin{center}
\begin{tikzpicture}[scale=1.7, fatnode/.style={shape=rectangle,draw, text width=3cm,rounded corners}]
\node[fatnode] (A) at (-2.25, 1) {LC for all $p$ (1980)};
\node[fatnode] (B) at (2.25,1) {No $p$-torsion in $\text{IC}(\overline{\text{Gr}_x},\mathbb{Z})$, $x$ with Jantzen's condition};
\node[fatnode] (C) at (-2.25, -0.5) {LC for large $p$ (not effective)};
\node[fatnode] (D) at (2.25, -0.5) {No $p$-torsion in ICs over sph. Sch. varieties in $G^\vee/B^\vee$};
\node[fatnode] (E) at (2.25, -2) {Counterexamples to bounds for L + J conjectures};
\node[fatnode] (F) at (-2.25, -2) {Effective (huge) bound depending on $R$};
\draw [thick, <->] (A) -- (B) node[midway,above] {F, AR + others} node[midway,below] {(2007, 2017)};
\draw [thick, ->] (B) -- (C) node[midway,left, text width = 3.5cm] {``Independence of $p$'' (KT, KL, L, AJS)};
\draw [thick, ->] (B) -- (D)
node[midway,right]
{``Steinberg embedding''};
\draw [thick, ->]
(D) -- (E)
node[midway,left, text width = 3.5cm]
{W + others (2013)};
\draw [thick, ->]
(C) -- (F)
node[midway,right]
{F (2007)};
\end{tikzpicture}
\end{center}

\section{Geometric example}
In this section we discuss a simple geometric example, where the phenomenon of torsion in IC sheaves is clearly visible. For more details on this example, the reader is referred to \cite{jmw}.

Denote by $X$ the quadric cone 
\begin{align*}
    \mathbb{C}^2/(\pm 1) \cong \text{Spec} \, \mathbb{C}[X,Y]^{(\pm 1)} &= \text{Spec} \, \mathbb{C}[X^2,XY,Y^2] \\
    &= \text{Spec} \, \mathbb{C}[a,b,c]/(ab-c^2) \\
    &= \left \{ x = \begin{pmatrix}
    c & -a \\
    b & -c
  \end{pmatrix} \in \mathfrak{sl}_2(\mathbb{C}): \text{$x$ is nilpotent} \right \}.
\end{align*}
This variety has a stratification into two pieces, $X = X^{\text{reg}} \sqcup \{ 0 \}$. Its real points can be pictured as follows:

\begin{center}
\begin{tikzpicture}[node distance = 2cm, auto,rotate=90]
  \def\rx{1}    
  \def\ry{0.25}  
  \def\z{1.5}     

  \pgfmathparse{asin(\ry/\z)}
  \let\angle\pgfmathresult

  \coordinate (h) at (0, \z);
  \coordinate (k) at (0, -\z);
  \coordinate (O) at (0, 0);     
  \coordinate (A) at ({-\rx*cos(\angle)}, {\z-\ry*sin(\angle)});
  \coordinate (B) at ({\rx*cos(\angle)}, {\z-\ry*sin(\angle)});
  \coordinate (C) at ({-\rx*cos(\angle)}, {-\z+\ry*sin(\angle)});
  \coordinate (D) at ({\rx*cos(\angle)}, {-\z+\ry*sin(\angle)});

  \draw[fill=gray!50] (A) -- (O) node[align=center, above]{0} -- (B) -- cycle;
  \draw[fill=gray!30] (h) ellipse ({\rx} and {\ry});
  \draw[fill=gray!50] (C) -- (O) -- (D) -- cycle;
  \draw[fill=gray!30] (k) ellipse ({\rx} and {\ry});
\end{tikzpicture} 
\end{center}

Suppose $\mathcal{F}$ is a perverse sheaf on $X$ with respect to the given stratification. The following table indicates the degrees $i$ in which $\mathcal{H}^i(\mathcal{F}|_{X'})$ can be non-zero for a stratum $X' = X_{\text{reg}}$ or $X' = \{ 0 \}$.
$$
\begin{tabular}{ | c | c | c | c | c | c | } 
\hline
 & -3 & -2 & -1 & 0 & 1 \\ 
\hline
$X_{\text{reg}}$ & 0 & $\star$ & 0 & 0 & 0 \\ 
\hline
$\{0 \}$ & 0 & $\star$ & $\star$ & $\star$ & 0 \\ 
\hline
\end{tabular}
$$
To compute the intersection cohomology sheaf $\text{IC}(X,k) = \text{IC}(\overline{X_{\text{reg}}},k)$, we can use the \textit{Deligne construction}: 
$$\text{IC}(X,k) = \tau_{< 0} j_* \underline{k}_{X^{\text{reg}}}[2],$$
where $j: X^{\text{reg}} \hookrightarrow X$ is the inclusion and $j_*$ denotes the right-derived functor $Rj_*$. First compute 
$$(j_* \underline{k}_{X^{\text{reg}}}[2])_0 = \lim_{\varepsilon \to 0} H^{i + 2}(B(0,\varepsilon) \cap X^{\text{reg}},k);$$
since $B(0,\varepsilon) \cap X^{\text{reg}}$ is homotopic to $S_{\varepsilon}^3 \cap X^{\text{reg}} = S^3/( \pm 1 ) = \mathbb{RP}^3$, we reduce to computing $H^*(\mathbb{RP}^3,k)$, or, by the universal coefficient theorem, $H^*(\mathbb{RP}^3,\mathbb{Z})$:
$$H^*(\mathbb{RP}^3,\mathbb{Z}) = 
\begin{tabular}{ | c | c | c | c | } 
\hline
0 & 1 & 2 & 3 \\ 
\hline
$\mathbb{Z}$ & $0$ & $\mathbb{Z}/2\mathbb{Z}$ & $\mathbb{Z}$ \\ 
\hline
\end{tabular} \quad
\begin{tikzpicture}
\draw[-stealth,decorate,decoration={snake,amplitude=3pt,pre length=2pt,post length=3pt}] (0,-0.5) -- node[align=center, above] {UCF} ++(1,0);
\end{tikzpicture} \quad H^*(\mathbb{RP}^3,k) = \begin{tabular}{ | c | c | c | c | } 
\hline
0 & 1 & 2 & 3 \\ 
\hline
$k$ & $(k)_2$ & $(k)_2$ & $k$ \\ 
\hline
\end{tabular}
.$$
Here $(k)_2 = k$ if $2 = 0$ in $k$, and $0$ otherwise. Thus we find
$$\text{stalks of $j_* \underline{k}_{X^{\text{reg}}}[2]$} = \begin{tabular}{ | c | c | c | c | } 
\hline
-2 & -1 & 0 & 1 \\ 
\hline
$k$ & $0$ & $0$ & $0$ \\ 
\hline
$k$ & $(k)_2$ & $(k)_2$ & $k$ \\ 
\hline
\end{tabular}$$
and hence, applying $\tau_{<0}$:
$$\text{stalks of $\text{IC}(X,k)$} = \begin{tabular}{ | c | c | c | c | } 
\hline
-2 & -1 & 0 & 1 \\ 
\hline
$k$ & $0$ & $0$ & $0$ \\ 
\hline
$k$ & $(k)_2$ & $0$ & $0$ \\ 
\hline
\end{tabular}.$$
Similarly, $\text{IC}(X,\mathbb{Z}) = \underline{\mathbb{Z}}[2]$ and $\mathbb{D}(\text{IC}(X,\mathbb{Z})) = \text{IC}^+(X,\mathbb{Z})$\footnote{We note that $\text{IC}(X,\mathbb{Q})$ has two models over $\mathbb{Z}$, written $\text{IC}(X,\mathbb{Z})$ and $\text{IC}^+(X,\mathbb{Z})$; these are exchanged by Verdier duality. See \cite{jmw} or \cite{jut} for more on integral perverse sheaves.} have stalks as follows:
$$\text{stalks of IC} = \begin{tabular}{ | c | c | c | c | } 
\hline
-2 & -1 & 0 \\ 
\hline
$\mathbb{Z}$ & $0$ & $0$ \\ 
\hline
$\mathbb{Z}$ & $0$ & $0$ \\ 
\hline
\end{tabular} \quad
\begin{tikzpicture}
\draw[-stealth,decorate,decoration={snake,amplitude=3pt,pre length=2pt,post length=3pt}] (0,-0.5) -- node[align=center, above] {Verdier duality} ++(2,0);
\end{tikzpicture} \quad \begin{tabular}{ | c | c | c | c | } 
\hline
-2 & -1 & 0 \\ 
\hline
$\mathbb{Z}$ & $0$ & $0$ \\ 
\hline
$\mathbb{Z}$ & $0$ & $\mathbb{Z}/2\mathbb{Z}$ \\ 
\hline
\end{tabular} = \text{stalks of $\text{IC}^+$}.$$
Note particularly the 2-torsion in the lower right of the preceding table; this is what causes the aforementioned complications with torsion in this example. It turns out $\text{IC}(X,\mathbb{Z}) \otimes_{\mathbb{Z}}^L k$ is simple if the characteristic of $k$ is not 2; otherwise, it has composition factors $\text{IC}(X,k)$ and $\text{IC}(0,k)$. \begin{remark}
By Exercise \ref{lexercise}, $X$ occurs as an open piece of the spherical Schubert variety $\overline{\text{Gr}_{2 \varpi_1}} \subseteq \text{Gr}_{\text{SL}_2}.$ Under the geometric Satake correspondence, our above analysis then  translates into the fact that
$$\text{$\nabla_{2 \varpi_1}$ is irreducible} \quad \Leftrightarrow \quad p \ne 2.$$
On the other hand, under the Finkelberg--Mirkovi\'c  conjecture, 
$$\text{LCF for $L_{2p}$} \quad \Leftrightarrow \quad \text{$2p$ has $\le$ two $p$-adic digits} \quad \Leftrightarrow \quad \text{$\text{IC}(X,\mathbb{Z}) \otimes k$ is simple}.$$
The reader might like to check this, keeping in mind Exercise \ref{simpchar}.
\end{remark}

\section{Intersection forms}
A key reference for this section is \cite{dm}. Practically speaking, a substantial problem is that the Deligne construction cannot be computed except in the very simplest cases, but looking at resolutions provides a way forward.

In the case considered above, the Springer resolution is
$$f: \widetilde{X} = T^* \mathbb{P}^1(\mathbb{C}) \to X,$$
or pictorially:
\begin{center}
\begin{tikzpicture}
\draw [fill=gray!30] (0,0) ellipse (1 and 0.25);
\draw (-1,0) -- (-1,-3);
\draw (0,-0.75) node[align=center]{$\widetilde{X}$};
\draw (-1,-1.5) node[align=center, left]{$\mathbb{P}^1(\mathbb{C})$};
\draw (1,-1.5) node[align=center, right]{\quad $\longrightarrow$};
\draw (-1,-3) arc (180:360:1 and 0.25);
\draw [dashed] (-1,-3) arc (180:360:1 and -0.25);
\draw [blue, thick] (-1,-1.5) arc (180:0:1 and -0.25);
\draw [dashed, blue, thick] (-1,-1.5) arc (180:360:1 and -0.25);
\draw (1,-3) -- (1,0);  
\draw [fill=gray,opacity=0.5] (-1,0) -- (-1,-3) arc (180:360:1 and 0.25) -- (1,0) arc (0:180:1 and -0.25);
\end{tikzpicture} \quad
\begin{tikzpicture}
  \def\rx{1}    
  \def\ry{0.25}  
  \def\z{1.5}     

  \pgfmathparse{asin(\ry/\z)}
  \let\angle\pgfmathresult

  \coordinate (h) at (0, \z);
  \coordinate (k) at (0, -\z);
  \coordinate (O) at (0, 0);     
  \coordinate (A) at ({-\rx*cos(\angle)}, {\z-\ry*sin(\angle)});
  \coordinate (B) at ({\rx*cos(\angle)}, {\z-\ry*sin(\angle)});
  \coordinate (C) at ({-\rx*cos(\angle)}, {-\z+\ry*sin(\angle)});
  \coordinate (D) at ({\rx*cos(\angle)}, {-\z+\ry*sin(\angle)});
  \draw[fill=gray!50] (A) -- (O) node[align=center, left]{0} -- (B) -- cycle;
  \draw[fill=gray!30] (h) ellipse ({\rx} and {\ry});
  \draw[fill=gray!50] (C) -- (O) -- (D) -- cycle;
  \fill [gray!50] (k) ellipse ({\rx} and {\ry});
  \draw [dashed] (-{\rx},-1.5) arc (180:360:{\rx} and -{\ry});
  \draw (-{\rx},-1.5) arc (180:0:{\rx} and -{\ry});
  \draw (0,0.75) node[align=center]{$X$};
  \draw (0,0) node[blue]{$\bullet$};
\end{tikzpicture} 
\end{center}
As we will see momentarily, there is an \textit{intersection form} $[-,-]$ on $$H^2(\PM^1) = \ZM[\PM^1\CM]$$ with values in $\mathbb{Z}$. Moreover,
$$\text{$[\mathbb{P}^1]^2$ invertible in $k$} \quad \Leftrightarrow \quad \text{$f_* \underline{k}_X [2]$ is semi-simple.}$$
In this case $[\mathbb{P}^1]^2 = -2$, so this falls in line with our earlier findings. (Recall: the self-intersection of any variety inside its cotangent bundle is the negative of its Euler characteristic!)

Let $X$ now be general, with stratification
$$X = \bigsqcup_{\lambda \in \Lambda} X_\lambda$$
and resolution $f: \widetilde{X} \to X$. We now have a schematic
\[
\begin{tikzcd}
\widetilde{X} \arrow{d}{f} & \widetilde{N_\lambda} \arrow{d}{} \arrow{l} & F_\lambda = f^{-1}(x_\lambda) \arrow{d} \arrow{l} \\
X = \bigsqcup_\lambda X_\lambda & N_\lambda \arrow{l} & \{ x_\lambda \} \arrow{l}
\end{tikzcd}
\]
where $x_\lambda \in X_\lambda$ is an arbitrary point and $\varnothing \ne N_\lambda \subseteq X$ is a \textit{normal slice} meeting the stratum $X_\lambda$ transversely at the point $x_\lambda$. Assume $f$ is \textit{semi-small}, so that
$$\text{dim $F_\lambda$} \le \frac{1}{2} \text{dim $\widetilde{N_\lambda}$} = n_\lambda.$$
This ensures the existence of a $\mathbb{Z}$-valued intersection form $IF_\lambda$ on top homology
$$H_{2n_\lambda}(F_\lambda) = \bigoplus_{C \in \mathcal{C}_\lambda} \mathbb{Z}[C],$$
where $\mathcal{C}_\lambda$ is the set of irreducible components of $F_\lambda$ of dimension $n_\lambda$.

\begin{prop}[\cite{jmw1}]
$f_* \underline{k} [\text{dim $\widetilde{X}$}]$ decomposes into a direct sum of IC sheaves if and only if every $IF_\lambda \otimes_{\mathbb{Z}} k$ is non-degenerate. 
\end{prop}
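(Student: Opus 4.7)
The plan is to reduce the statement to a stratum-by-stratum criterion for semisimplicity of a perverse sheaf, and then identify the relevant obstruction with the intersection form $IF_\lambda$. First, since $f$ is proper and semi-small, proper base change together with the dimension bound $\dim F_\lambda \le n_\lambda$ show that $P := f_*\underline{k}_{\widetilde{X}}[\dim \widetilde{X}]$ is a perverse sheaf on $X$ (this is the standard semi-small case of BBD). Writing $j_\lambda : X_\lambda \hookrightarrow X$, proper base change along the normal slice $N_\lambda$ identifies the perverse stalk $\mathcal{H}^0(j_\lambda^*P[d_\lambda])$ and the perverse costalk $\mathcal{H}^0(j_\lambda^! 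P[d_\lambda])$ at a point $x_\lambda$ each canonically with the top Borel--Moore homology $H^{BM}_{2n_\lambda}(F_\lambda,k) = \bigoplus_{C \in \mathcal{C}_\lambda} k[C]$.

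Next, I would invoke (or establish by induction on the number of strata) the general recollement fact that a perverse sheaf $P$ on a stratified variety with simply-connected strata is a direct sum of IC sheaves if and only if, for each stratum $X_\lambda$, the canonical adjunction map
\[
\phi_\lambda \colon {}^p j_{\lambda!}\, j_\lambda^* P \longrightarrow {}^p j_{\lambda*}\, j_\lambda^* P
\]
is an isomorphism. The inductive step is that, peeling off an open stratum, $\phi_\lambda$ being an isomorphism is exactly the condition that the "new" contribution on $\overline{X_\lambda}$ splits off as an IC summand of $P$ rather than forming a non-trivial extension with sheaves supported on deeper strata.

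The heart of the proof is then to identify the map induced by $\phi_\lambda$ on perverse stalks/costalks at $x_\lambda$ with $IF_\lambda \otimes_{\ZM} k$. Concretely, a component $C \in \mathcal{C}_\lambda$ of the fibre over $x_\lambda$ gives (via properness and the fundamental class of $C \subset \widetilde{N_\lambda}$) a map $\underline{k}_{\{x_\lambda\}} \to {}^p j_{\lambda!} j_\lambda^* P$, while the dual component $C'$ gives a map ${}^p j_{\lambda *} j_\lambda^* P \to \underline{k}_{\{x_\lambda\}}[0]$; the composition through $\phi_\lambda$ is, by Poincaré duality on the smooth slice $\widetilde{N_\lambda}$ and a cap/cup-product computation, the geometric intersection number $[C]\cdot[C']$ in $\widetilde{N_\lambda}$. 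This is exactly $IF_\lambda$, and the construction is integral, so reducing mod $p$ yields the claim.

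The main obstacle is precisely this last identification: one must check not just that $\phi_\lambda$ and $IF_\lambda$ are matrices of the same size, but that they agree on the nose, including signs and the passage from the "abstract" adjunction in the perverse heart to a geometric intersection pairing. Semi-smallness is used essentially here, since it guarantees that perverse stalks and costalks at $x_\lambda$ are concentrated in a single degree with the same finite dimension $|\mathcal{C}_\lambda|$, so that "isomorphism" and "non-degeneracy of a square form" coincide. With this identification in hand, the equivalence is immediate: $P$ decomposes as a direct sum of IC sheaves $\iff$ every $\phi_\lambda$ is an isomorphism $\iff$ every $IF_\lambda \otimes_\ZM k$ is non-degenerate.
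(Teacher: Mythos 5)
The paper does not give its own proof of this proposition; it cites \cite{jmw1} and points to \cite{dm} as the key background reference. So I can only assess your argument on its own terms, and it has a genuine gap at the recollement step. The criterion you invoke --- that $P$ is a direct sum of IC sheaves if and only if every $\phi_\lambda : {}^{p}j_{\lambda!}(j_\lambda^*P) \to {}^{p}j_{\lambda*}(j_\lambda^*P)$ is an isomorphism --- is false. The map $\phi_\lambda$ is determined entirely by the restriction $j_\lambda^*P$; it carries no information about how $P$ is glued across lower strata, so it cannot distinguish $P$ from, say, $j_{\lambda!*}(j_\lambda^*P) \oplus Q$ with $Q$ supported on $\overline{X_\lambda}\setminus X_\lambda$. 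A concrete counterexample: let $f : \widetilde X \to X$ be a small resolution of the three-dimensional quadric cone $\{xy = zw\} \subseteq \mathbb{C}^4$, with exceptional fibre $\mathbb{P}^1$ over the cone point. Here $f_*\underline{\mathbb{Q}}[3] \cong \text{IC}(X,\mathbb{Q})$ is a single IC sheaf and every $IF_\lambda$ is trivially non-degenerate, yet on the open stratum $\phi : {}^{p}j_!\,\underline{\mathbb{Q}}[3] \to {}^{p}j_*\,\underline{\mathbb{Q}}[3]$ has a skyscraper kernel and cokernel at the origin (the link is diffeomorphic to $S^2 \times S^3$, whose nonzero middle rational cohomology forces ${}^{p}j_* \ne \text{IC}$). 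So your criterion wrongly predicts a failure of semi-simplicity over $\mathbb{Q}$. Relatedly, restricting $\phi_\lambda$ to a point of $X_\lambda$ \emph{itself} always gives the identity map, so ``the map induced by $\phi_\lambda$ on perverse stalks/costalks at $x_\lambda$'' cannot be $IF_\lambda$.

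The object you actually want is the natural map from perverse costalk to perverse stalk at $x_\lambda$, i.e.\ the composite $i_{x_\lambda}^! P \to P \to i_{x_\lambda}^* P$ coming from the two attaching triangles for the closed point $\{x_\lambda\}$ inside the normal slice $N_\lambda$. Unlike $\phi_\lambda$, this genuinely depends on $P$ as an object of the derived category, not merely on its strata restrictions. Taking cohomology in the single degree allowed by semi-smallness yields a map $H^{BM}_{2n_\lambda}(F_\lambda,k) \to H^{2n_\lambda}(F_\lambda,k)$, and it is this map that one identifies (via restriction to $\widetilde{N_\lambda}$, the cycle class map, and Poincar\'e duality on $\widetilde{N_\lambda}$) with $IF_\lambda \otimes_{\mathbb{Z}} k$. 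Self-duality of $P$ --- coming from smoothness of $\widetilde X$ and properness of $f$, and not used in your proposal --- is then needed to upgrade the local splitting statements to ``$P$ is a direct sum of IC sheaves''. A smaller but real slip: the perverse stalk and costalk at $x_\lambda$ are not both ``canonically'' $H^{BM}_{2n_\lambda}(F_\lambda,k)$; the costalk is $H^{BM}_{2n_\lambda}(F_\lambda,k)$, while the stalk is $H^{2n_\lambda}(F_\lambda,k)$, its linear dual. Identifying the two is exactly what the intersection form does, so writing both as the same space quietly presupposes the conclusion.
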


\begin{remark}
The $IF_\lambda$ are usually still difficult to calculate, since one must first find the fibres $F_\lambda$, compute components, and so on. A ``miracle situation'' arises when the $F_\lambda$ are smooth, since for $p: F_\lambda \to \text{pt},$ 
$$IF_\lambda = p_{!}(\text{Euler class of normal bundle of $F_\lambda$}).$$
\end{remark}

The following result underpins the idea of torsion explosion, by implying that torsion in the (co)stalks of spherical Schubert varieties in $\text{SL}_n/B$ grows at least exponentially with $n$.

\begin{thm}[Williamson \cite{wil16c}]
For any entry $\gamma$ of any word of length $\ell$ in the generators $\begin{pmatrix}
    1 & 1 \\
    0 & 1
  \end{pmatrix}$ and $\begin{pmatrix}
    1 & 0 \\
    1 & 1
  \end{pmatrix}$, one can associate a spherical Schubert variety $X_x \subseteq \text{SL}_{3 \ell +5}/B$, a Bott-Samelson resolution $\widetilde{X_x} \to X$, and a point $w_I \in X$, such that the miracle situation holds and the intersection form is $(\pm \gamma)$.
\end{thm}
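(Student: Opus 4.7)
The plan is to realise matrix multiplication in $\mathrm{SL}_2(\mathbb{Z})$ inside the geometry of Bott--Samelson resolutions, so that each chosen matrix entry appears as (plus or minus) the value of an intersection form, with the miracle situation making the computation tractable. I would proceed by induction on the word length $\ell$, where each letter of the word $I$ in the generators $U = \begin{pmatrix}1&1\\0&1\end{pmatrix}$ and $L = \begin{pmatrix}1&0\\1&1\end{pmatrix}$ contributes a controlled block of three simple reflections to a reduced expression $\underline{x}$ for an element $x$ in the symmetric group $S_{3\ell+5}$. The factor $3$ reflects the number of reflections allocated per letter; the constant $+5$ provides the fixed initial flag configuration inside which the ``matrix action'' takes place.

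First, I would fix an ambient Bott--Samelson setup whose geometry is governed by the evolution of a $2$-plane inside a longer flag, so that multiplication by $U$ (resp.\ $L$) is realised as a specific shear of the plane. The letters of $I$ then determine $\underline{x}$ by concatenating, for each letter, a fixed block of three adjacent simple reflections chosen so that the associated Bott--Samelson step geometrically implements the corresponding shear. The point $w_I \in X_x$ would be taken as a carefully chosen Iwahori-orbit representative, selected so that (a) it lies in the correct stratum to single out the desired entry $\gamma$ among the four entries of the product matrix and (b) the fibre $F_{w_I}$ over $w_I$ in the Bott--Samelson resolution $\widetilde{X_x}\to X_x$ is \emph{smooth}. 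Verifying smoothness reduces to an explicit description of Bott--Samelson fibres as iterated subvarieties of products of $\mathbb{P}^1$'s, and the letter-blocks should be designed so that each stage preserves smoothness, producing a tower of $\mathbb{P}^1$-bundles.

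With the miracle situation established, the intersection form reduces, by the formula in the preceding remark, to the pushforward of the Euler class of the normal bundle $N_{F_{w_I}/\widetilde{X_x}}$ to a point. This is an equivariant cohomology computation with respect to the diagonal torus action, and it can be carried out using the standard Bott--Samelson calculus (line bundles labelled by simple roots, Euler class additive over the $\mathbb{P}^1$-bundle structure). The inductive step is then the crux: appending a further letter $U$ or $L$ to $I$ corresponds to forming a new $\mathbb{P}^1$-bundle on top of the existing fibre, and the resulting Euler class must be shown to evolve by precisely the linear operation on matrix entries effected by right multiplication by $U$ or $L$. The base case ($\ell \le 1$) is a direct check and fixes the sign; the inductive step propagates it to arbitrary $\ell$, giving $\pm \gamma$.

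The hard part is the design of the letter-blocks. One needs to arrange, simultaneously, that (a) $\underline{x}$ is reduced (so $\widetilde{X_x}\to X_x$ is birational and the intersection-form framework applies), (b) the fibre $F_{w_I}$ remains smooth after each additional block (staying in the miracle situation), and (c) the local Euler-class recursion obtained by adding a block matches exactly right multiplication by $U$ or $L$ on the ``running matrix'', rather than some twisted or quadratic variant. It is the coexistence of these three requirements, and the bookkeeping needed to track the matrix entries through the Euler-class computation, that constitutes the substantive content of Williamson's construction; everything else is extracting the consequences of the miracle formula.
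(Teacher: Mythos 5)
The paper does not prove this theorem; it is stated as a black box and cited from \cite{wil16c}. There is therefore no internal argument for me to compare your proposal against, and I can only assess your sketch against the argument actually given in Williamson's paper.

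Your outline is correctly oriented at the level of programme: build a reduced expression for an $x \in S_{3\ell+5}$ from a fixed overhead plus one block per letter of the word in $U,L$; locate a point $w_I$ at which the Bott--Samelson fibre is smooth, so the miracle formula applies; and arrange that the resulting Euler-class computation realises right multiplication by $U$ or $L$. You also correctly identify the crux, namely designing the blocks so that reducedness of $\underline{x}$, smoothness of the relevant fibre, and the clean linear recursion all hold simultaneously. Where I would be cautious is the specific picture of the fibre $F_{w_I}$ as an $\ell$-storey tower of $\mathbb{P}^1$-bundles whose normal-bundle Euler class evolves one layer per letter. That is not quite how the computation is organised in \cite{wil16c}: the bookkeeping there is carried out algebraically in the diagrammatic Soergel calculus, assembling the target intersection form out of intermediate data attached to a chain of elements via a concatenation-type argument, with the geometric statement about $F_{w_I}$ and its normal bundle extracted afterwards through the Juteau--Mautner--Williamson intersection-form dictionary. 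So your sketch is a plausible alternative route, but it does not supply --- and you explicitly concede that it does not supply --- the mechanism by which the Euler-class recursion closes up onto matrix multiplication; that mechanism is precisely where the substance of the theorem lies, and it is the part the citation is carrying.
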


For further discussion of the connections between torsion explosion and the bounds required for Lusztig's conjecture, see \cite[\textsection 2.7]{wil16a}.

\addtocontents{toc}{\protect\addvspace{1em}}

\bibliographystyle{alpha}
\bibliography{biblio.bib}
\end{document}